\newcommand{\hair}{\ifmmode\mskip1mu\else\kern0.08em\fi}
\renewcommand{\P}{\mathbb{P}}
\newcommand{\E}{\mathbb{E}}
\newcommand{\R}{\mathbb{R}}
\newcommand{\C}{\mathbb{C}}
\newcommand{\N}{\mathbb{N}}
\newcommand{\Z}{\mathbb{Z}}
\newcommand{\one}{\mathbbm{1}}
\newcommand{\mrm}{\mathrm}
\newcommand{\msf}{\mathsf}
\newcommand{\iid}{i.i.d.\ }
\newcommand{\diff}{\mathrm d}
\newcommand{\mugeo}{\mu^q_{\mrm{Geo}}}
\DeclareMathAlphabet{\mathdutchcal}{U}{dutchcal}{m}{n}
\newtheorem{theorem}{Theorem}[section]
\newtheorem*{theorem*}{Theorem}
\newtheorem*{proposition*}{Proposition}
\newtheorem{proposition}[theorem]{Proposition}
\newtheorem*{corollary*}{Corollary}
\newtheorem{lemma}[theorem]{Lemma}
\theoremstyle{definition}
\newtheorem{definition}[theorem]{Definition}
\newtheorem{remark}[theorem]{Remark}
\title{The lower tail of $q$-pushTASEP}
\author{Ivan Corwin}
\author{Milind Hegde}
\newcommand{\resc}{X^{\mrm{sc}}_N}
\begin{document}

\begin{abstract}
We study $q$-pushTASEP, a discrete time interacting particle system whose distribution is related to the $q$-Whittaker measure. We prove a uniform in $N$ lower tail bound on the fluctuation scale for the location $x_N(N)$ of the right-most particle at time $N$ when started from step initial condition. Our argument relies on a map from the $q$-Whittaker measure to a model of periodic last passage percolation (LPP) with geometric weights in an infinite strip that was recently established in \cite{imamura2021skew}. 
By a path routing argument we bound the passage time in the periodic environment in terms of an infinite sum of independent passage times for standard LPP on $N\times N$ squares with geometric weights whose parameters decay geometrically. To prove our tail bound result we combine this reduction with a concentration inequality, and a crucial new technical result---lower tail bounds on $N\times N$  last passage times uniformly over all $N \in \N$ and all the geometric parameters in $(0,1)$. This technical result uses Widom's trick \cite{widom2002convergence} and an adaptation of an idea of Ledoux introduced for the GUE \cite{ledoux2005deviation} to reduce the uniform lower tail bound to uniform asymptotics for very high moments, up to order $N$, of the Meixner ensemble. This we accomplish by first  obtaining sharp uniform estimates for factorial moments of the Meixner ensemble from an explicit combinatorial formula of Ledoux \cite{ledoux2005distributions}, and translating them to polynomial bounds via a further careful analysis and delicate cancellation.
\end{abstract}

\maketitle

\setcounter{tocdepth}{1}
\tableofcontents

\section{Introduction, main results, and proof ideas}

The Kardar-Parisi-Zhang (KPZ) universality class consists of a large variety of models, all of which are believed to exhibit certain universal behaviors; for example, common scaling limits. Most progress in this area has been in the setting of certain models that are known as \emph{exactly solvable} or \emph{integrable}, which possess certain algebraic structure that makes their analysis within reach, in comparison to non-integrable models.

In such models, it is often important for applications to have control on the upper and lower tails of the KPZ observable on the fluctuation scale. Of the two, it is more challenging to obtain this control on the lower tail (i.e., the one that typically has cubic tail exponent), though in what are known as zero-temperature models, such as TASEP and last passage percolation, a variety of techniques have been developed over the last two decades to do this (e.g. Riemann-Hilbert methods or analysis of determinantal representations; see Section~\ref{s.intro.prior work} for a detailed discussion). In contrast, for \emph{positive} temperature models such as the KPZ equation, stochastic six vertex model, ASEP, and polymer models, only a few techniques have recently been developed to approach this problem. Further, each technique only seems to be applicable in particular cases; due to fundamental limitations, there is no broad coverage.

In this paper we study the exactly solvable, discrete time interacting particle system model of \emph{geometric $q$-pushTASEP}, a positive temperature model, but one for which the few methods available to obtain lower tails in positive temperature do not seem applicable. We develop a new technique for lower tail estimates on the position of the right-most particle, harnessing recently discovered connections between it and last passage percolation.

We start by introducing the model of study and our main results.

\subsection{Principal objects and models of study}

\subsubsection{Some notation and distributions}
The \emph{$q$-Pochhammer symbol} $(z;q)_n$ is given by
\begin{align*}
(z;q)_n = \prod_{i=0}^{n-1}(1-zq^i) \quad \text{ for } n=0,1, \ldots,
\end{align*}
with $(z;q)_{\infty}$ defined by replacing $n-1$ by $\infty$.
The \emph{$q$-binomial coefficient} is given by
\begin{align}\label{e.q-binomial coefficient}
\binom{n}{k}_{\!\!q} = \frac{(q;q)_n}{(q;q)_k(q;q)_{n-k}}.
\end{align}

The \emph{q-deformed beta binomial distribution} is a distribution with parameters $q$, $\xi$, $\eta$, and $m$. Here $m\in\Z_{\geq 0}$ and the distribution is defined on $\{0,1, \ldots, m\}$; the other parameters are non-negative real numbers, and are restricted to more specific domains in certain cases that we will describe. For $s\in\{0, \ldots, m\}$, the probability mass function at $s$ is given by
\begin{align*}
\varphi_{q,\xi, \eta}(s\mid m) = \xi^s \frac{(\eta/\xi; q)_s(\xi; q)_{m-s}}{(\eta; q)_m}\cdot \binom{m}{s}_{\!\!q}.
\end{align*}
We refer the reader to \cite[Section~6.1]{matveev2016q} for more information regarding this distribution, including a discussion on why the above expression sums (over $s=0, \ldots, m$) to $1$ when the expression is well-defined and non-negative.

A special case is the $q$-Geometric distribution of parameter $\xi$ (denoted $q$-Geo($\xi$)), obtained by taking $m=\infty$, $\eta=0$, and $q,\xi\in(0,1]$, so that the probability mass function at $s\in\Z_{\geq 0}$ is given by
\begin{align*}
\varphi_{q,\xi, \eta}(s\mid \infty) = \xi^s \frac{(\xi; q)_{\infty}}{(q;q)_s}.
\end{align*}

\subsubsection{The model of $q$-pushTASEP}\label{s.q-pushTASEPmodel} The $q$-pushTASEP is a discrete time interacting particle system on $\Z$ first introduced in \cite{matveev2016q}.
We have $N\in\N$ many particles which occupy distinct sites in $\Z$, and we label their position at time $T\in\Z_{\geq 0}$ in increasing order as $x_1(T) < x_2(T) < \ldots < x_N(T)$; we denote the collection of these random variables by $x(T)$. We also specify a collection of parameters $a_1, \ldots, a_N$ and $b_1, b_2, \ldots$, all lying in $(0,1)$.

The evolution from time $T$ to $T+1$ is as follows. The particle positions are updated from left to right: for $k\in\{1, \ldots, N\}$,
\begin{align*}
x_k(T+1) = x_k(T) + J_{k,T} + P_{k,T},
\end{align*}
where $J_{k,T}$ and $P_{k,T}$ are independent random variables with $J_{k,T}\sim q\text{-Geo}(a_kb_{T+1})$ (encoding a \emph{jump} contribution) and
$$P_{k,T} \sim \varphi_{q^{-1}, \xi= q^{\mrm{gap}_k(T)}, \eta=0}\bigl(\cdot \mid x_{k-1}(T+1) - x_{k-1}(T)\bigr),$$
(encoding a \emph{push} contribution) where $\mrm{gap}_k(T) = x_k(T) - x_{k-1}(T)-1$, $x_0(T) = -\infty$ by convention and, by a slight abuse of notation, $\sim$ means the LHS is distributed according to the measure which has probability mass function given by the RHS. In other words, $P_{k,T}$ is a $q$-deformed beta binomial random variable  with parameters $q^{-1}, \xi= q^{\mrm{gap}_k(T)}, \eta = 0$, and $m=x_{k-1}(T+1) - x_{k-1}(T)$. Note in particular that $x_1$'s motion does not depend on that of any other particle, i.e., marginally it follows a random walk. Further, the process is an exclusion process, i.e., particles always occupy distinct sites and also remain ordered.\footnote{Let $\Delta_{k-1}(T+1)=x_{k-1}(T+1) - x_{k-1}(T)$ and $j$ be the value the probability mass function in the RHS of the previous display is evaluated at. When $\Delta_{k-1}(T+1)\geq \mrm{gap}_k(T)$, the probability mass function is zero unless $j\geq \Delta_{k-1}(T+1) - \mrm{gap}_k(T)$, as otherwise the factor $(q^{\mrm{gap}_k(T)}; q^{-1})_{\Delta_{k-1}(T+1) - j}$ is zero. It is immediate from the definition that this implies $x_k(T)+ j\geq x_{k-1}(T+1) + 1$, thus ordering and exclusion are maintained.}

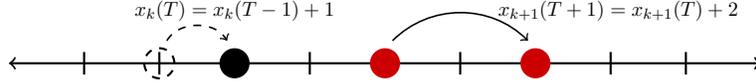
\begin{figure}
\begin{tikzpicture}
\draw[<->, thick] (-5,0) -- (5,0);

\foreach \x in {-4,..., 4}
\draw[thick] (\x, 0.15) -- ++(0,-0.3);

\fill[red!80!black] (0,0) circle (0.2cm);
\fill[red!80!black] (2,0) circle (0.2cm);

\draw[black, dashed, thick] (-3,0) circle (0.2cm);
\fill[black] (-2,0) circle (0.2cm);

\draw[->, dashed, semithick] (-2.9, 0.3) to[out=60, in=120] (-2.1,0.3);
\draw[->, semithick] (0.1, 0.3) to[out=45, in=135] (1.9,0.3);

\node[scale=0.7] at (3.1, 0.7) {$x_{k+1}(T+1) = x_{k+1}(T) + 2$};
\node[scale=0.7] at (-2, 0.7) {$x_{k}(T) = x_{k}(T-1) + 1$};
\end{tikzpicture}
\caption{A depiction of one step in the evolution of $q$-pushTASEP. The dotted circle is the position of the $k$\textsuperscript{th} particle at time $T-1$, and the solid black circle is it after it moves to its position at time $T$. The left red circle is the $(k+1)$\textsuperscript{th} particle at time $T$ and the right one the same at time $T+1$. The movement of the $k$\textsuperscript{th} particle in the previous step effects the $P_{k,T}$ contribution to the total jump size of $2$ of the $(k+1)$\textsuperscript{th} particle at time $T$.}
\end{figure}

This model is integrable. More precisely, the distribution of $x_N(T)$, started from a special initial condition known as \emph{step initial condition} where $x_k(0) = k$, can be related to a marginal of the $q$-Whittaker measure, a measure on partitions (equivalently, Young diagrams) defined in terms of $q$-Whittaker polynomials. This connection will be important for our arguments, and we will discuss it more in Section~\ref{s.intro.q-pushtasep to LPP}.

Apart from integrability, another reason $q$-pushTASEP is of interest is because it degenerates to other well-known models. Indeed, in the $q\to 1$ limit, when appropriately renormalized, $x_N(T)$ converges to the free energy of the log-gamma polymer model. While our results will not carry over to this limit, we will make some further remarks about this relationship between the models in Section~\ref{s.log gamma remark}. We also mention that after speeding up time and letting the jump rates go to zero, $q$-pushTASEP converges to the continuous time $q$-pushTASEP \cite{borodin2016nearest} which, when $q=0$, is the well-known pushTASEP \cite{borodin2008large}.

\subsubsection{Law of large numbers and asymptotic Tracy-Widom fluctuations of $q$-pushTASEP}

In this work we will focus on $q$-pushTASEP when the parameters are equal, i.e., $u = a_i = b_j$ for all $i=1, \ldots, N$ and $j=1, 2, \ldots$ for some $u\in(0,1)$, and when the initial condition is $x_k(0) = k$ for $k=1, \ldots, N$. In this setting, and under some additional restrictions on the parameters, \cite{vetHo2022asymptotic} proved a law of large numbers for $x_N(T)$, which in the $T=N$ case states (with the convergence being in probability) that
\begin{align}\label{e.fq definition}
\lim_{N\to\infty} \frac{x_N(N)}{N} = 2\cdot\frac{\psi_q(\log_q u) + \log(1-q)}{\log q} + 1 =: f_q;
\end{align}
here $\log_q u = \log u/\log q$ is the logarithm to the base $q$ and $\psi_q$ is the $q$-digamma function, given by
\begin{equation}\label{e.q-digamma}
\psi_q(x) = \frac{1}{\Gamma_q(x)}\frac{\partial \Gamma_q(x)}{\partial x},
\end{equation}
where $\Gamma_q(x) = \frac{(q;q)_\infty}{(q^x;q)_{\infty}}(1-q)^{1-x}$ is the $q$-gamma function.

Note that our definition of $q$-pushTASEP differs from that of \cite{vetHo2022asymptotic} and \cite{matveev2016q}, in that particles move to the right for us rather than the left, thus introducing an extra negative sign in the law of large numbers. Our definition agrees with the one given in \cite{imamura2022solvable}.

\cite{vetHo2022asymptotic} also proves that the asymptotic fluctuation of $x_N$ converges to the GUE Tracy-Widom distribution. To state this, let us consider the rescaled observable
\begin{align}\label{e.definition of X^sc}
\resc = \frac{x_N(N) -f_qN}{(-\psi_q''(\log_q u))^{1/3}(\log q^{-1})^{-1}N^{1/3}};
\end{align}
note that the denominator is a positive quantity, since $\psi_q''(x)<0$ for all $x>0$ (see e.g. \cite{mansour2009some}). 
Now for $q,u\in(0,1)$, and under certain restrictions on those parameters that are used to simplify the analysis there,
\cite[Theorem~2.2]{vetHo2022asymptotic} asserts that $\resc \Rightarrow F_{\mrm{GUE}}$, where $F_{\mrm{GUE}}$ is the GUE Tracy-Widom distribution. 

The proof given in \cite{vetHo2022asymptotic} relies on certain formulas for $q$-Laplace transforms of particle positions proved in \cite{borodin2015height}. The recent work \cite{imamura2022solvable} gives different Fredholm determinant formulas for randomly shifted versions of $x_N(T)$ (see Corollary 5.1 there) from which it should also be possible to extract the above distributional convergence, with a perhaps simpler analysis; indeed, the analogous convergence is demonstrated for a half-space version of the model in \cite[Theorem~6.11]{imamura2022solvable}.

It remains a question whether the conditions assumed in \cite{vetHo2022asymptotic} are necessary for this convergence to hold; our techniques suggest it should hold for any $q, u\in(0,1)$. In particular, our results will hold for all $q,u\in(0,1)$.

\subsection{Main results}

Our main theorem bounds the lower tail of the fluctuations of the centred and scaled position $\resc$ (as defined in \eqref{e.definition of X^sc}) of the $N$\textsuperscript{th} particle of $q$-pushTASEP, as introduced in Section~\ref{s.q-pushTASEPmodel}.

\begin{theorem}\label{mt.q-pushtasep bound}
Let $q, u\in(0,1)$ and let $a_i=b_j = u$ for all $i,j$. There exist positive absolute constants $c'$, C, and $N_0$ (independent of $q$ and $u$) such that, with $\theta_0 = C(1-u)^{-1/3}(1\vee (\log q^{-1})^{-2/3})$ and $c = (1-u)^{1/2}c'$, and for $N\geq N_0$ and $\theta>\theta_0$,
\begin{align*}
\P\left(\resc <  -\theta\right) \leq \exp\bigl(-c\theta^{3/2}\bigr).
\end{align*}

\end{theorem}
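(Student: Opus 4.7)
The plan is to exploit the three ingredients flagged in the abstract. First, via the connection between the step-initial $q$-pushTASEP and the $q$-Whittaker measure, together with the identity of \cite{imamura2021skew}, I would express $x_N(N)$ (up to a deterministic shift) as a last-passage time $L^{\mrm{per}}_N$ in a geometric-weight periodic LPP environment on an infinite strip, with weight parameters encoded by $q$ and $u$. The event $\{\resc < -\theta N^{1/3}\}$ then becomes a lower-tail event for $L^{\mrm{per}}_N$ on the fluctuation scale $N^{1/3}$, and the goal reduces to an upper bound of the form $\exp(-c\theta^{3/2})$ for this event uniformly in $N$ and in $q,u\in(0,1)$.

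Next I would carry out the path-routing reduction. The periodic strip can be unrolled so that any up-right path wraps around finitely many times; restricting to paths that spend exactly one $N\times N$ block on each of the wrappings produces a \emph{lower} bound of the form $L^{\mrm{per}}_N \ge \sum_{k\ge 0} L^{(k)}_N$, where $L^{(k)}_N$ are independent standard $N\times N$ geometric LPP times whose parameters decay geometrically in $k$ (inherited from the factor $q$ through the strip weights). The lower tail event for $L^{\mrm{per}}_N$ therefore forces a joint lower tail for the independent summands $L^{(k)}_N$, so the target probability factorises into a product of lower-tail probabilities of standard $N\times N$ LPP times at varying parameters.

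The main obstacle, and the bulk of the work, is the uniform lower-tail bound on an $N\times N$ geometric LPP time $L_N(p)$ that holds uniformly in $N\in\N$ and $p\in(0,1)$ on the fluctuation scale. Here I would follow Widom's trick: $L_N(p)$ is distributed as the largest particle of the Meixner ensemble, and the probability that this largest particle lies below a level can be rewritten through Widom's identity as a ratio/sum involving the moments of the ensemble. I would then extract asymptotics for high moments of the Meixner ensemble, uniformly in $N$ and in the Meixner parameter, via steepest-descent on the orthogonal polynomial (or contour integral) representation of the Meixner kernel, translating these into a lower-tail estimate of Tracy--Widom type with a stretched-exponential rate $\exp(-c\theta^{3/2})$. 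Uniformity in $p$ is the delicate point, since the equilibrium measure and its edge degenerate as $p\to 0$ or $p\to 1$; controlling the saddle uniformly across this range is where I expect most of the technical difficulty to lie.

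With the uniform standard-LPP lower tail in hand, the final step is to assemble the pieces. For $k$ up to some $k^\star$ of order $|\log(\log q^{-1})|$, the parameters $uq^k$ remain bounded away from $0$, and the uniform lower tail applies directly to each $L^{(k)}_N$; for $k>k^\star$ the parameters are so small that the corresponding summands are negligible and a soft concentration/variance estimate suffices to absorb them into the mean. Choosing $\theta_0=C|\log(\log q^{-1})|$ ensures that the loss from combining $O(k^\star)$ independent factors of $\exp(-c\theta^{3/2})$, together with matching the mean of $\sum_k L^{(k)}_N$ to the LLN constant $f_q N$, leaves a net bound of $\exp(-c\theta^{3/2})$ for all $\theta>\theta_0$ and $N\ge N_0$, which is the content of Theorem~\ref{mt.q-pushtasep bound}.
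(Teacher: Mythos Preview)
Your overall architecture matches the paper: pass to periodic LPP via \cite{imamura2021skew}, lower bound by $\sum_{k\ge 0}L_N^{(k)}$ with $L_N^{(k)}$ independent $N\times N$ geometric LPP values, verify that the LLN of this sum reproduces $f_qN$, and feed in a uniform-in-parameter lower-tail bound for $L_N^{(k)}$ obtained through Widom's trick and Meixner moment asymptotics. Two steps, however, are not right as written.

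\emph{The moment asymptotics.} You propose to get the uniform Meixner edge estimate ``via steepest-descent on the orthogonal polynomial (or contour integral) representation of the Meixner kernel''. This is precisely the route the paper says fails to give estimates uniform in the geometric parameter: the steepest-descent contours depend implicitly on $p$, and uniformity as $p\to 0,1$ is lost. The paper instead starts from an \emph{explicit formula} for the factorial moments of the expected empirical measure $\nu_{q,N}$, analyses the resulting finite sum by a hands-on Laplace-method argument done uniformly in $q$, and then converts factorial to polynomial moments. That conversion is itself nontrivial (an extra factor $\exp(-k^2/2\mu_qN)$ must be shown to cancel), and none of this machinery is visible in your sketch.

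\emph{The assembly step.} The claim that the lower-tail event for $\sum_k L_N^{(k)}$ ``factorises into a product of lower-tail probabilities'' is false: $\{\sum_k (L_N^{(k)}-\mu_k)<-t\}$ does not decompose multiplicatively over $k$. The paper instead proves a bespoke concentration inequality for sums of independent variables with stretched-exponential tails of \emph{varying scales} $\rho_k$ (an MGF bound of the form $\E e^{\lambda X}\le \exp(C(\lambda\rho^{-2/3}+\lambda^3\rho^{-2}))$, then Chernoff). This produces a bound $\P(\sum_k X_k \ge t + C\sigma_{2/3})\le \exp(-c\sigma_2^{-1/2}t^{3/2})$ with $\sigma_\alpha=\sum_k\rho_k^{-\alpha}$. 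The threshold $\theta_0=C|\log(\log q^{-1})|$ is \emph{not} the count of ``$O(k^\star)$ factors'' you describe; it is the accumulated mean shift $\sigma_{2/3}=\sum_k \rho_k^{-2/3}\asymp (\log q^{-1})^{-1}|\log\log q^{-1}|$, reflecting that each centered $L_N^{(k)}$ has a negative mean of order $\rho_k^{-2/3}$ on the fluctuation scale (the Tracy--Widom mean), and these losses sum logarithmically. Your proposed split at $k^\star\sim|\log\log q^{-1}|$ also misidentifies the scales: $uq^k$ stays bounded away from $0$ for $k\lesssim (\log q^{-1})^{-1}$, not $|\log\log q^{-1}|$, and in any case the uniform LPP bound is designed precisely so that no such cutoff is needed.
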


We believe the true lower tail behavior to be $\exp(-c\theta^3)$, at least for $\theta \ll N^{2/3}$, i.e., smaller than the large deviation regime, similar to other models in the KPZ class. We discuss ahead in Remark~\ref{r.geometric lpp tail exponent} why our arguments do not achieve this, and also how with additional different arguments it should be possible to attain the full exponent of $3$.

It is an interesting question whether Theorem~\ref{mt.q-pushtasep bound} can be extended to the case of general $a_i$ and $b_j$. As we will see, a crucial connection to last passage percolation that we rely on continues to hold, so the general scheme is broadly applicable. However certain moment formulas available for the last passage percolation problem in the case of homogeneous parameters are not known for the general case, and this is where the strategy stops being directly applicable.

\subsection{Lower tails of KPZ observables} \label{s.intro.prior work}

In the past decade, integrable tools have been combined with other perspectives to slowly push out of strictly integrable settings. Examples include studies of geometric properties in models of last passage percolation (e.g. \cite{J00,slow-bond,coalescence,riddhi-aging,BHS18,basu2021temporal,watermelon,sarkar2021infinite,schmid2022mixing}),  process-level regularity properties (e.g. \cite{corwin2014brownian,corwin2016kpz,hammond2016brownian,hammond2017patchwork,calvert2019brownian,sarkar2020brownian,dauvergne2023wiener}) of processes whose finite-dimensional distributions are accessible via exactly solvable tools, recent progress on constructing the ASEP speed process \cite{aggarwal2022asep}, edge and bulk scaling behavior of tiling or dimer models \cite{aggarwal2019universality,huang2021edge,aggarwal2021edge,huang2023pearcey}, as well as the construction of the directed landscape and convergence of LPP models to it \cite{dauvergne2018basic,dauvergne2018directed,dauvergne2021scaling}. In these works, a crucial input from the integrable side has repeatedly been bounds on the tails of the relevant statistic. In the following, by zero temperature models we will mean models which can be embedded in determinantal point processes, e.g. integrable models of last passage percolation.

In the case of zero temperature models (e.g. geometric or exponential last passage percolation or the totally asymmetric simple exclusion process), these tail bounds had been studied two decades ago, with arguments relying in an essential way on determinantal structure possessed by these models. In positive temperature (e.g. the KPZ equation or the asymmetric simple exclusion process), where such structure is not directly available, progress on obtaining these important tail inputs has only been made in the last few years, but their availability promises to create the opportunity to bring the zero temperature successes to the positive temperature.

\subsubsection{The relative difficulty of upper and lower tail bounds} From a physical perspective, it is easy to see that the upper and lower tails should have different rates of decay, with the lower tail decaying faster.  This is because the upper tail concerns making a single, ``largest'' object even larger---in $q$-pushTASEP, making the right-most particle lie even further to the right, which can be accomplished by demanding a single large jump of the right-most particle. So, in particular, the other particles are not a barrier. In contrast, in the lower tail exactly the opposite happens: for the right-most particle to lie atypically to the left, \emph{all} the other particles must also do so---in particular, many jumps, including those of other particles, must be suppressed. However, this intuition does not reveal the fact that, typically, it is technically much more challenging to obtain lower tail bounds than upper ones. Further, while this intuition turns out to be well-suited for arguments to understand large deviations behavior (e.g. \cite{basu2017upper} for LPP), i.e., deviations on scale $N$, in applications one needs bounds on the fluctuation scale (i.e., deviations on scale $N^{1/3}$).

For solvable zero temperature models, which have determinantal descriptions, the difference in the difficulty of upper and lower tails can be seen from the fact that the upper tail bounds follow directly from bounds on the kernel of the associated determinantal point process, while this is not the case for lower tail bounds.
Nevertheless, as mentioned above, in these models, a number of approaches have been developed over the last two decades. These include the Riemann-Hilbert approach (e.g. \cite{baik2001optimal}), methods based on explicit formulas for moments (e.g. \cite{ledoux2005deviation}), connections to random matrix theory (e.g. \cite{johansson2000shape,ledoux2010,basu2019lower,ramirez2011beta} as well as large deviation work e.g. \cite{guionnet2021asymptotics,augeri2021large,cook2023full}), and abstract concentration (see e.g. \cite[Section 3]{auffinger201750} for applications to first passage percolation).

The toolbox for the upper tail is already fairly well developed in positive temperature (i.e., non-determinantal but exactly solvable models). Here too one often has determinantal formulas for the distribution of the observable, and one can extract the upper tail by establishing decay of the kernel in these determinantal formulas. An instance where this is done is \cite[Theorem~1.4]{barraquand2021fluctuations}, in the context of the log-gamma polymer. Besides this approach, one can also try to extract upper tail estimates from the moments of the exponential of the random variable of interest (e.g. for the KPZ equation, this corresponds to moments of the stochastic heat equation, or, for our model, the analogue is $q$-moments). An example of this method is captured in \cite[Proposition~4.3 and Lemma~4.5]{corwin2018kpz}, where tail estimates for the narrow-wedge KPZ equation are obtained through estimates on the $k$\textsuperscript{th} moment of the stochastic heat equation. We also mention recent works \cite{ganguly2022sharp} and \cite{landon2022tail} which respectively make use of Gibbs properties and special structure of stationary versions of the relevant models (the KPZ equation and O'Connell-Yor polymer respectively) to prove upper tail estimates, but these methods by their nature are specific to models which have such probabilistic structure.

For our model of $q$-pushTASEP, a Fredholm determinant formula of the type the first approach relies on can be found in \cite[Theorem 3.3]{borodin2015height} (via our model's connection to the $q$-Whittaker measure, see Section~\ref{s.intro.q-pushtasep to LPP} ahead or the discussion in \cite[Section~7.4]{matveev2016q}), and a different one in \cite[Corollary 5.1]{imamura2022solvable}. A formula for the $q$-moments in our model is also available, though there is a subtlety in that not all $q$-moments are finite; see Section 7.4 in \cite{matveev2016q} for the formula and a brief discussion of this point.

Having said this, while there are well-established approaches to obtain such upper tail estimates, it is certainly not a triviality to actually do so in any model, and we do not pursue them for $q$-pushTASEP in this work. However, we plan to revisit this question as part of subsequent work in which we will need both tail bounds to study further aspects of this model.

The toolbox for the lower tail in positive temperature is smaller but is being actively developed, and we briefly review some of the tools now. However, these do not seem applicable to our model, and so we are ultimately led to develop a new technique. %

\subsubsection{Work using determinantal representations of Laplace transforms} The first class of techniques for lower tails in positive temperature models gives a determinantal representation for the Laplace transform (or $q$-Laplace transform) of the observable. This approach was initiated in \cite{corwin2020lower}, which obtained fluctuation-scale lower tail bounds for the narrow wedge solution to the KPZ equation. \cite{corwin2020lower} used a formula from \cite{borodin2016moments,amir2011probability} which equates the Laplace transform of the fundamental solution of the stochastic heat equation (which is related to the KPZ equation via the Cole-Hopf transform) to an expectation of a multiplicative functional of the Airy point process, which is determinantal. That it is a multiplicative functional (as well as its precise form) is very useful as it allows the lower tail of the KPZ equation to be bounded in terms of the lower tail behavior of the particles at the edge of the Airy point process, which in turn can be controlled via determinantal techniques as outlined above.

The Laplace transform identity that this argument relies on can be seen as a special case of a general matching proved in \cite{borodin2018stochastic} between the stochastic six vertex model's height function and a multiplicative functional of the row lengths of a partition sampled according to the Schur measure. The stochastic six vertex model and the Schur measure are each known to specialize to a number of models also of interest; for example, one degeneration of the former is the asymmetric simple exclusion process (ASEP), and the analogous one for the latter is the discrete Laguerre ensemble, a determinantal process. This yields an identity between the $q$-Laplace transform of ASEP and a multiplicative functional of the discrete Laguerre ensemble \cite{borodin2017asep}, which was used to obtain a lower tail bound for the former in \cite{aggarwal2022asep}, using the latter's connection to TASEP.

Unfortunately, not all degenerations to models of interest play nicely on both sides. For instance, for the O'Connell-Yor polymer, the Schur measure side of the stochastic six vertex model identity degenerates to an average of a multiplicative functional with respect to a point process whose measure is a \emph{signed} measure instead of a probability measure \cite{imamura2016determinantal}. Typical modes of analysis break down in the context of signed measures. For other models too, including ours, this issue of signed measures seems to arise.

A related recent approach brings in the machinery of Riemann-Hilbert problems and has been developed in \cite{cafasso2022riemann}. There, in the setting of the KPZ equation, the mentioned expectation of the multiplicative functional of the Airy point process is expressed as a Fredholm determinant, and then the latter is written as a Riemann-Hilbert problem. So far this approach has only been developed at the level of the KPZ equation, and so it remains to be seen how broadly it can be applied.

\subsubsection{Coupling and geometric methods in polymer models}\label{s.recent work in polymer}  
For the semi-discrete O'Connell-Yor and log-gamma polymer models, recent work \cite{landon2022upper,landon2022tail} has obtained lower tail estimates via a mixture of exact formulas, coupling arguments, and geometric considerations. This builds on methods developed in the zero temperature model of exponential last passage percolation \cite{emrah2020right,emrah2021optimal,emrah2022coupling}. The program has so far been implemented in full in the semi-discrete O’Connell-Yor model and in part for the log-gamma polymer. First, \cite{landon2022upper} obtains the bounds for a stationary version of the model (where one can prove an explicit formula for the Laplace transform of the free energy), and then, for the O'Connell-Yor case, these are translated to the original model using geometric considerations of the polymer measure in \cite{landon2022tail}. In fact, the
Laplace transform bound obtains a lower tail exponent of 3/2, which is then upgraded to the sharp exponent of 3 by adapting geometric arguments from \cite{ganguly2020optimal}. Since this method relies heavily on the polymer geometry, it is unclear how it could be extended to address the model of $q$-pushTASEP which only has a particle interpretation.

In summary, while there are a variety of methods in zero-temperature models to obtain lower tail bounds, so far only a handful of tools are available for positive temperature models. The ones available do not seem immediately applicable to our model. For this reason, we introduce a new method which does not rely on polymer structure or identities between $q$-Laplace transforms and multiplicative functionals of determinantal point processes, which are not directly available in $q$-pushTASEP. We rely instead on the recent work \cite{imamura2021skew} which relates the $q$-Whittaker measure on partitions to a model of periodic geometric last passage percolation. In this way, we are able to use both the polymer techniques and determinantal structure which \emph{are} available in geometric last passage percolation to analyze $q$-pushTASEP. Our methods may also be useful in studying the lower tails of $q$-pushTASEP with certain other special initial data that also have connections to marginals of $q$-Whittaker measures, or $q$-pushTASEP with particle creation \cite{barraquand2020half,imamura2022solvable} which should have a description in terms of a similar last passage percolation problem via half-space $q$-Whittaker measures.

To explain our broad approach, we next describe this model of periodic last passage percolation.

\subsection{Last passage percolation}\label{s.lpp}

We first describe the environment in which our last passage percolation (LPP) problem will exist. We consider a sequence of $N\times T$ ``big rectangles'' indexed by $k\in\N\cup\{0\}$, each of which contains $NT$ ``small squares'' inside. These are arranged in a periodic strip as shown in Figure~\ref{f.infinite LPP}. We use the coordinates $(i,j;k)$ (with $(i,j)\in\{1, \ldots, N\}\times\{1, \ldots, T\}$ and $k\in\{0,1, \ldots, \}$) to denote the small square with coordinates $(i,j)$ in the $k$\textsuperscript{th} big square. The site $(i,j;k)$ is associated with an independent non-negative random variable $\smash{\xi_{(i,j;k)}}$ which we call a \emph{site weight}.

The distribution of the randomness of the site weights is as follows. We will say $X\sim\mrm{Geo}(z)$ if $X$ is a random variable such that $\P(X\geq k) = z^k$ for $k=0,1,2 \ldots $; in other words, $z$ is the \emph{failure} probability in repeated independent trials and $X$ is the number of failures before the first success. Then the site weights are specified as follows: $\smash{\xi_{(i,j;k)}}$ are independent across all $i,j,k$, and distributed as $\mrm{Geo}(a_ib_j q^k)$ for $k=0,1,2, \ldots$ and $(i,j)\in\{1, \ldots, N\}\times \{1, \ldots, T\}$.

Note that in our model with the specialization $u=a_i=b_j$, the site weights in the same big square, i.e., with the same value of $k$, are identically distributed as $\mrm{Geo}(u^2q^k)$. For $s = (i,j;k)$ a site in the strip, we may also write $\xi_s$ for $\smash{\xi_{(i,j;k)}}$.

\begin{remark}\label{r.ultimately zero}
Observe that $\P(\xi_{(i,j;k)} \neq 0) = a_ib_jq^k$ for all $i$, $j$, $k$, which is summable over $(i,j)\in\{1, \ldots, N\}\times\{1, \ldots, T\}$ and $k = 0,1, \ldots$. So by the Borel-Cantelli lemma, almost surely, for all large enough $k$ and all $(i,j)\in\{1, \ldots, N\}\times\{1, \ldots, T\}$, $\smash{\xi_{(i,j;k)}}$ will be zero.
\end{remark}

We consider downward paths which are allowed to wrap around the strip, i.e., paths which are at $(i,T;k)$ may move to $(i, 1; k+1)$ and paths which are at $(N,j;k)$ may move to $(1, j; k+1)$ in the next step; again see Figure~\ref{f.infinite LPP}. Each such path $\gamma$ is assigned a weight $w(\gamma)$ given by $\sum_{v\in\gamma} \xi_{v}$. Note that while a priori the weight of $\gamma$ could be infinite if $\gamma$ is an infinite path, in our setting the environment will only have finitely many non-zero site weights almost surely (as noted above in Remark~\ref{r.ultimately zero}), and so this possibility will not arise.

Now, the last passage value $L_{v,w}$ between $v$ and $w$ small squares in the strip is defined as
$$L_{v,w} := \max_{\gamma: v\to w} w(\gamma),$$
where the maximum is over all downward paths from $v$ to $w$, assuming at least one such path exists. If not, we define $L_{v,w}$ to be $-\infty$; we say this only to give a logically complete definition, but such cases will not actually arise in this paper.

This model of LPP is similar to other models of periodic LPP considered in the literature (e.g. \cite{baik2018fluctuations,baik2019multipoint,baik2021periodic,betea2021peaks,schmid2022mixing} though perhaps with slightly different selections of parameters or distributions of the random variables), and also has connections to the periodic Schur measure \cite{borodin2007periodic,betea2021peaks}.

\begin{figure}[h]
  \begin{center}
  \begin{tikzpicture}[scale=0.56]
  \newcommand{\sidelength}{2}

  \begin{scope}[shift={(10,1.75)}]

    \begin{scope}
    \clip (-\sidelength-1.7, 0.1) rectangle (\sidelength+2, -7);

    \fill[yellow, opacity=0.25] (-\sidelength, -\sidelength) -- (0,0) -- (\sidelength, -\sidelength) -- (0, -2*\sidelength) -- cycle;

    \fill[orange, opacity=0.25] (\sidelength, -\sidelength) -- ++(0,-4) -- ++(-2, 2) -- cycle;
    \fill[orange, opacity=0.25] (-\sidelength, -\sidelength) -- ++(0,-4) -- ++(2, 2) -- cycle;

    \fill[purple!50!blue, opacity=0.25] (-\sidelength, -3*\sidelength) -- ++(\sidelength,-\sidelength) -- ++(\sidelength, \sidelength) -- ++(-\sidelength, \sidelength) --cycle;

    \fill[green!40, opacity=0.25] (-\sidelength, -3*\sidelength) -- ++(0,-1) -- ++(1,0) -- cycle;
    \fill[green!40, opacity=0.25] (\sidelength, -3*\sidelength) -- ++(0,-1) -- ++(-1,0) -- cycle;

    \draw[thick] (-\sidelength,-7) -- (-\sidelength,-\sidelength) -- (0,0) -- (\sidelength,-\sidelength) -- (\sidelength,-7);

    \draw[thick] (-\sidelength, -\sidelength) --coordinate[at end](R1) ++(2*\sidelength, -2*\sidelength);
    \draw[thick] (\sidelength, -\sidelength) --coordinate[at end](L1) ++(-2*\sidelength, -2*\sidelength);

    \draw[thick,dashed] (R1) -- ++(-2*\sidelength, -2*\sidelength);
    \draw[thick,dashed] (L1) -- ++(2*\sidelength, -2*\sidelength);

    \foreach \i [evaluate=\i as \x using \i-0.5] in {1, 1.5,...,\sidelength}
    {
      \draw[opacity=0.6] (-\x, -\x) -- ++(\sidelength+\x, -\sidelength-\x) -- ++(-2*\sidelength,-2*\sidelength);
      \draw[opacity=0.6] (\x, -\x) -- ++(-\sidelength-\x, -\sidelength-\x) -- ++(2*\sidelength,-2*\sidelength);
    }

    \end{scope}

    \draw[very thick, green!60!black] (0,-0.5) -- ++(1,-1) -- ++(-1, -1) -- ++(2, -2);
    \draw[very thick, green!60!black] (-\sidelength, -4.5) -- ++(1.5,-1.5) -- ++(-0.5, -0.5) -- ++(0.5, -0.5);

    \node[scale=0.8] (first-label) at (\sidelength+0.3, -1) {$\mathrm{Geo}(u^2)$};
    \node[scale=0.8] (second-label) at (\sidelength+1.4, -4) {$\mathrm{Geo}(u^2q)$};
    \node[scale=0.8] (third-label) at (-\sidelength-1.4, -6) {$\mathrm{Geo}(u^2q^2)$};

    \draw[->, semithick] (first-label) to[out=180, in=90] (0, -1.5);
    \draw[->, semithick] ($(second-label)+(-0.1,0.2)$) to[out=120, in=80] (\sidelength-1, -4.5);
    \draw[->, semithick] (third-label) to[out=60, in=90] (0, -5.5);

    \draw[thick, dashed, <-] (40:2cm and 0.4cm) arc   (40:-220:2cm and 0.4cm);

  \end{scope}

  \begin{scope}[shift={(9in, 0.66in)}]

	\begin{scope}
	\clip (-\sidelength-1.7, 0.1) rectangle (\sidelength+2.5, -7);

	\fill[yellow, opacity=0.25] (-\sidelength, -\sidelength) -- (0,0) -- (\sidelength+0.5, -\sidelength-0.5) -- ++(-2, -2) -- cycle;

	\fill[orange, opacity=0.25] (\sidelength+0.5, -\sidelength-0.5) -- ++(0,-4) -- ++(-2, 2) -- cycle;
	\fill[orange, opacity=0.25] (-\sidelength, -\sidelength) -- ++(0,-4) -- ++(0.5,-0.5) -- ++(2, 2) -- ++(-2.5, 2.5) -- cycle;

	\fill[purple!50!blue, opacity=0.25] (\sidelength+0.5, -\sidelength-4.5) -- ++(0,-0.5) -- ++(-3.5, 0) -- ++(-0.5, 0.5) -- ++(2,2) --cycle;
	\fill[purple!50!blue, opacity=0.25] (-\sidelength, -\sidelength-4) -- ++(0,-1) -- ++(0.5, 0.5) --cycle;

	\fill[green!40, opacity=0.25] (-\sidelength, -\sidelength-5) -- ++(1,0) -- ++(-0.5,0.5) -- cycle;

	\draw[thick] (-\sidelength,-7) -- (-\sidelength,-\sidelength) -- (0,0) -- (\sidelength+0.5,-\sidelength-0.5) -- (\sidelength+0.5,-7);

	\draw[thick] (-\sidelength, -\sidelength) --coordinate[at end](R1) ++(2*\sidelength+0.5, -2*\sidelength-0.5);
	\draw[thick] (\sidelength+0.5, -\sidelength-0.5) --coordinate[at end](L1) ++(-4.5, -4.5);

	\draw[thick] (-\sidelength, -\sidelength-4) -- ++(1, -1);

	\foreach \i [evaluate=\i as \x using \i] in {0.5, 1,...,\sidelength}
	{
		\draw[opacity=0.6] (-\x, -\x) -- ++(\sidelength+\x+0.5, -\sidelength-\x-0.5) -- ++(-2*\sidelength,-2*\sidelength);
		\draw[opacity=0.6] (\x, -\x) -- ++(-\sidelength-\x, -\sidelength-\x) -- ++(2*\sidelength,-2*\sidelength);
	}
	\end{scope}

	\node[scale=0.8] (first-label) at (\sidelength+1.8, -1.8) {$\mathrm{Geo}(a_2b_4)$};
	\node[scale=0.8] (second-label) at (\sidelength+1.9, -4) {$\mathrm{Geo}(qa_3b_1)$};
	\node[scale=0.8] (third-label) at (-\sidelength-1.85, -6) {$\mathrm{Geo}(q^2a_2b_2)$};

	\draw[->, semithick] (first-label) to[out=180, in=90] (1, -2.5);
	\draw[->, semithick] ($(second-label)+(-0.1,0.2)$) to[out=120, in=80] (\sidelength-0.5, -4);
	\draw[->, semithick] (third-label) to[out=60, in=90] (0.5, -6);

	\draw[<->, dotted, semithick] (0.2,0.2) -- (\sidelength+0.5+0.2, -\sidelength-0.5+0.2);
	\draw[<->, dotted, semithick] (-0.2,0.2) -- (-\sidelength-0.2, -\sidelength+0.2);

	\node[anchor=south] at (1.8, -1.7+0.3) {$T$};
	\node[anchor=south] at (-1.8, -1.7+0.3) {$N$};

	\draw[thick, dashed, <-] (40:2.8cm and 0.3cm) arc   (40:-220:2.35cm and 0.4cm);

  \end{scope}

  \end{tikzpicture}
  \end{center}
  \caption{The environment in which the infinite last passage percolation occurs. The dashed arrow on top indicates the direction in which the squares wrap around, and the solid green line on the left is a downward path which wraps around the strip. In the left panel we have the model under consideration in this article with the specialization of $T=N$ and $u=a_i=b_j$ for all $i,j$, while on the right is the general model.}\label{f.infinite LPP}
\end{figure}
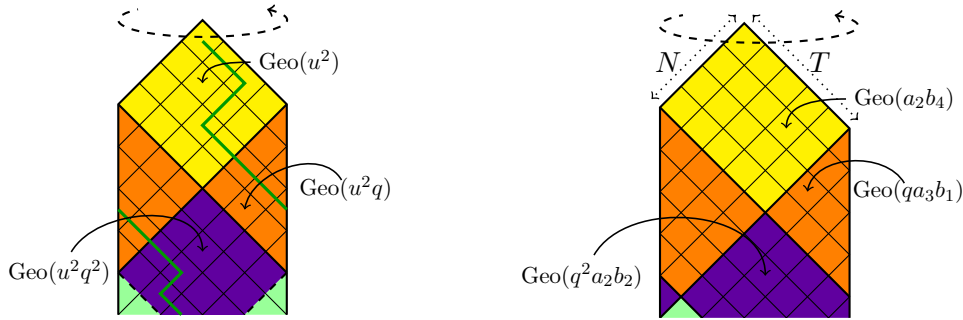

\subsection{Relation between $q$-pushTASEP and LPP}\label{s.intro.q-pushtasep to LPP}
We can now explain the exactly solvable connection between $q$-pushTASEP and the model of LPP in an infinite periodic strip just introduced that was recently discovered by Imamura-Mucciconi-Sasamoto, and on which our arguments crucially rely. We first state the equivalence precisely.

While this paper only considers the case $T=N$, we will state the LPP equivalence for general $T$. For this, we return to the LPP problem described in Section~\ref{s.lpp} where the infinite periodic environment has ``fundamental domain'' with $N\times T$. The parameters of the geometric random variables is as described there, i.e., site $(i,j;k)$ has parameter $a_ib_jq^k$.

\begin{theorem}\label{t.q-pushTASEP to lpp}
Let $L$ be the LPP value in the environment just described with $a_i,b_j\in(0,1)$ for all $(i,j)\in\{1, \ldots, N\}\times\{1, \ldots, T\}$. Let $x_N(T)$ be the position of the $N$\textsuperscript{th} particle at time $T$ in $q$-pushTASEP with the same parameters $a_i, b_j$ for all $(i,j)\in\{1, \ldots, N\}\times\{1, \ldots, T\}$ and step initial condition. Then
\begin{align*}
x_N(T) \stackrel{d}{=} L + N .
\end{align*}
\end{theorem}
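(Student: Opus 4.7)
The plan is to reduce the statement to a distributional identity for the top row $\mu_1$ of a $q$-Whittaker distributed partition, and then realize that identity via the bijective machinery developed in \cite{imamura2021skew}. More precisely, Theorem~\ref{t.q-pushTASEP and whittaker} already gives $x_N(T) \stackrel{d}{=} \mu_1 + N$ with $\mu \sim \mathbb{W}^{(q)}_{a;b}$ for $a=(u,\ldots,u)\in(0,1)^N$ and $b=(u,\ldots,u)\in(0,1)^T$. Thus it suffices to show $L \stackrel{d}{=} \mu_1$, where $L$ is the last passage value in the periodic strip whose $k$th fundamental $N\times T$ domain has \iid $\mathrm{Geo}(u^2 q^k)$ weights.

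The key tool to prove $L \stackrel{d}{=} \mu_1$ is the composition of two bijections. First, the Sagan--Stanley correspondence \cite{sagan1990robinson} realizes an environment of non-negative integer weights in the infinite strip as a pair of skew Young tableaux $(P,Q)$ of the same shape (modulo a common core), with the property that $L$ is recorded through the shape data. Second, the skew RSK bijection of \cite{imamura2021skew} sends such a pair $(P,Q)$ to a pair of vertically strict tableaux together with some residual data, and crucially preserves the identity: the length of the top row of the vertically strict tableau equals the LPP value $L$. Composing these gives a weight-preserving bijection between integer-valued environments in the strip and (pair of vertically strict tableaux)+(data), in which $L$ is read off as the first row of one of the vertically strict tableaux.

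Now I would take the appropriate weighted sum to convert this bijection into an identity of measures. Writing the joint probability that the environment takes a prescribed integer configuration with site weights $\mathrm{Geo}(u^2 q^k)$ as a product indexed by $(i,j,k)$, the corresponding generating function for the image side factors: the vertically strict tableau generating functions are, by a formula from \cite{imamura2021skew}, precisely the $q$-Whittaker polynomials $\mathscr{P}_\mu(a;q)\mathscr{P}_\mu(b;q)$ (with the $\mathdutchcal{b}_\mu(q)$ factor coming from the additional data), while the normalization constant $\Pi(a;b) = \prod_{i,j} (u^2;q)_\infty^{-1}$ emerges from summing geometric series $\sum_{\xi\geq 0}(u^2 q^k)^\xi$ over all cells in the strip, via the $q$-Pochhammer identity $\prod_{k\geq 0}(1-u^2 q^k)^{-1} = (u^2;q)_\infty^{-1}$. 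Restricting the bijection to the event $\{L = \ell\}$ and rearranging, one concludes that the law of $L$ matches the law of $\mu_1$ under $\mathbb{W}^{(q)}_{a;b}$.

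\textbf{Main obstacle.} The hard part is not the conceptual flow, which is dictated by the Schur/RSK analogy, but rather the bookkeeping: verifying that under the composed bijection the weights on the environment side (the \iid geometric factors with the precise $q^k$ decay across big squares) line up exactly with the product $\mathdutchcal{b}_\mu(q)\mathscr{P}_\mu(a;q)\mathscr{P}_\mu(b;q)/\Pi(a;b)$ on the partition side. In particular, one must carefully match the role of the periodicity parameter $q^k$ with the $q$-parameter inside the $q$-Whittaker polynomials, and account for the ``core'' and residual data in the skew RSK bijection correctly so that they do not contribute spurious weights. Once this accounting is done, the identity $L \stackrel{d}{=} \mu_1$ follows by summing the matched weights over all environments producing a given $L$.
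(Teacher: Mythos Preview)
Your proposal is correct and follows essentially the same route as the paper: reduce via Theorem~\ref{t.q-pushTASEP and whittaker} to $L\stackrel{d}{=}\mu_1$, then compose Sagan--Stanley with the skew RSK bijection of \cite{imamura2021skew}, use that the composed map sends $L$ to the top row of the vertically strict tableaux, and verify that the geometric weights push forward to the $q$-Whittaker measure via the weight-preservation properties (the paper isolates these in a separate lemma, and identifies the three pieces $\mathscr P_\mu(a;q)$, $\mathscr P_\mu(b;q)$, $\mathdutchcal b_\mu(q)$ exactly as you describe, with the $q$-grading on the environment side matched by the intrinsic energy $\mathcal H$ plus the $\kappa$-data on the tableau side). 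The paper in fact proves the stronger statement for all row sums $\mu_1+\cdots+\mu_j$ via multi-path LPP, but the $j=1$ case is your argument.
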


The connection between $x_N(T)$ and $L$ runs through a measure on partitions known as the $q$-Whittaker measure, as mentioned above. More precisely, it was shown in \cite{matveev2016q} (and stated ahead as Theorem~\ref{t.q-pushTASEP and whittaker}) that $x_N(T)$, for any $N,T\in\N$ and with step initial condition (i.e., $x_k(0)= k$ for $k=1, \ldots, N$), is distributed as the length of the top row in a partition (encoded as a Young diagram) distributed according to the $q$-Whittaker measure (after a deterministic shift by $N$). So it remains to establish a distributional equality between the $q$-Whittaker measure's top row and the LPP value $L$; the proof of this was explained to us by Matteo Mucciconi and relies on results from \cite{imamura2021skew} and we discuss it next.

\cite{imamura2021skew} proves a relation between the $q$-Whittaker measure and the periodic LPP model. From the perspective of the needs of this paper, the main consequence of \cite{imamura2021skew} is the development of a bijection which generalizes the Robinson-Schensted-Knuth correspondence. In traditional LPP on $\Z^2$, the RSK correspondence associates to the LPP environment (with non-negative integer site weights) a pair of Young tableaux of the same shape, with the property that LPP statistics are encoded in the row lengths of the tableaux; for instance, the LPP value is exactly the length of the top row of the tableaux. It is well-known that, under this correspondence, the measure on the environment given by \iid geometric random variables gets pushed forward to give the Schur measure on partitions, i.e., Young diagrams.
The generalization of RSK established in \cite{imamura2021skew}, called skew RSK there, relates pairs of \emph{skew} Young tableaux to pairs of \emph{vertically strict tableaux} (tableaux where the ordering condition on the entries is imposed only along columns and not rows) along with some additional data.

In this bijection, LPP has not had a role to play. To involve LPP, we recall an earlier generalization of RSK known as the Sagan-Stanley correspondence \cite{sagan1990robinson}, which can be interpreted as giving a bijection between the LPP environment in an infinite strip (again with non-negative integer site weights) and pairs of skew Young tableaux. \cite{imamura2021skew} also shows that, if one composes this bijection with the skew RSK bijection, then the LPP value is exactly the length of the top row of the vertically strict tableaux coming from the skew RSK. 

It turns out that the generating function of vertically strict tableaux can be written in terms of the $q$-Whittaker polynomials. Using this fact, certain weight preservation properties of the bijection, and an argument similar to the well-known one that  establishes the above mentioned relationship between geometric LPP and the Schur measure, one can show that the LPP value when the infinite strip has site weights given by independent geometric variables with parameter specified above has the same distribution as the top row of a random partition from the $q$-Whittaker measure. Since this statement is not recorded explicitly in \cite{imamura2021skew}, we will give a proof using results from that paper in Appendix~\ref{app.q-whittaker and lpp}. In fact, one can relate the lengths of all the rows of the partition to LPP values involving multiple disjoint paths, and we prove this stronger statement in Theorem~\ref{t.full q-whittaker to lpp}. Theorem~\ref{t.q-pushTASEP to lpp} then follows by combining this theorem with Theorem~\ref{t.q-pushTASEP and whittaker} (relating $x_N(T)$ and the top row of the $q$-Whittaker measure).

\subsection{Proof ideas}\label{s.proof ideas}

We specialize to $N=T$ and $a_i =b_j = u\in(0,1)$ for $(i,j)\in\{1, \ldots, N\}\times\{1, \ldots, T\}$. 

To summarize, $x_N(N)$ is, up to a deterministic shift by $N$, the LPP value in an infinite periodic environment of inhomogeneous geometric random variables. Now, the weight of \emph{any} path in this environment is a lower bound on the last passage percolation value. We consider a specific path which allows us to utilize the homogeneity of the geometric variable parameters inside a big square (as well as tail information of geometric LPP in such homogeneous squares) along with the independence across big squares.

More specifically, we consider the path formed by concatenating paths from the top to bottom of the big squares along the center, i.e., the squares in which the geometric parameter is $u^2q^{2i}$ for some $i\in\N\cup\{0\}$. More precisely, we do not exactly concatenate the paths as they do not have a common site; we simply consider the sum of the weights of the paths, ignoring the positive weight of the extra site needed to actually join the paths. See Figure~\ref{f.concatenation}.

\begin{figure}
\begin{tikzpicture}

\fill[yellow, opacity=0.25] (0,0) -- ++(-1,1) -- ++(1,1) -- ++(1,-1)  --cycle;

\fill[orange, opacity=0.25] (0,0) -- ++(1,1) -- ++(1,-1) -- ++(-1,-1)  --cycle;
\fill[orange, opacity=0.25] (0,0) -- ++(-1,1) -- ++(-1,-1) -- ++(1,-1)  --cycle;

\fill[purple!50!blue, opacity=0.25] (0,0) -- ++(-1,-1) -- ++(1,-1) -- ++(1,1)  --cycle;

\draw[thick] (-1,1) -- (1,-1);
\draw[thick] (1,1) -- (-1,-1);

\draw[opacity=0.6] (-1.5, -0.5) -- ++(2,2);
\draw[opacity=0.6] (-0.5, -1.5) -- ++(2,2);
\draw[opacity=0.6] (-1.5, 0.5) -- ++(2,-2);
\draw[opacity=0.6] (-0.5, 1.5) -- ++(2,-2);

\draw[very thick, green!60!black] (0, 1.5) -- ++(-0.25,0.25);
\draw[very thick, green!60!black] (0, 1.5) -- ++(0.5,-0.5) -- ++(-0.5, -0.5);

\draw[very thick, green!60!black] (0, -0.5) -- ++(-0.5,-0.5) -- ++(0.75, -0.75);

\draw[very thick, green!60!black, dashed] (0, 0.5) -- ++(0.5,-0.5) -- ++(-0.5, -0.5);
\end{tikzpicture}
\caption{A depiction of the paths we consider near the boundary between different big squares. The two solid green paths go from the topmost site to the bottommost site in their respective big squares, where the environment is homogeneous. We do not consider the dotted green path needed to connect them, which is valid for proving an upper bound on the lower tail since including its weight will only increase the overall weight.}
\label{f.concatenation}
\end{figure}
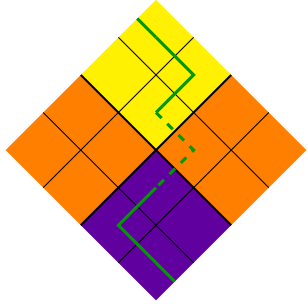

Let us calculate the law of large numbers of this path, i.e., its weight up to first order, using the knowledge of the LLN for geometric LPP. Indeed, in an $N\times N$ square with geometric parameter $u^2q^{2i}$, to first order in $N$, the LPP value is $2N\cdot\frac{uq^{i}}{1-uq^i}$ (see for example \cite{johansson2000shape} or Theorem~\ref{t.uniform lower tail} ahead), so that the overall LPP value of the path we have described is, again to first order,
\begin{align}\label{e.first form of LLN}
2N\cdot\sum_{i=0}^{\infty}\frac{uq^{i}}{1-uq^i} = 2N\cdot\sum_{i=0}^{\infty}\frac{q^{i+\log_q u}}{1-q^{i+\log_q u}}.
\end{align}
To evaluate this sum we need the $q$-digamma function $\psi_q$, defined in \eqref{e.q-digamma}.
Now, the $q$-digamma function $\psi_q$ is related to the sum in \eqref{e.first form of LLN} by the formula
$$\psi_q(x) = -\log(1-q)+\log q\cdot\sum_{i=0}^\infty \frac{q^{i+x}}{1-q^{i+x}}.$$
From this we see that \eqref{e.first form of LLN}  equals
\begin{align}\label{e.lln expression}
2N\cdot\frac{\psi_q(\log_q(u)) + \log(1-q)}{\log q} = N(f_q-1),
\end{align}
which is the first order term in the probability in Theorem~\ref{mt.q-pushtasep bound} (remember that $L$ and $x_N(N)$ differ by a constant term of $N$) and matches the LLN proved in \cite{vetHo2022asymptotic}.

\subsubsection{Uniform LPP control}

We have identified a concatenation of LPP problems which obtains the correct first order behaviour. Now, the order of fluctuations of geometric LPP of parameter $u^2q^{2i}$ in an $N\times N$ square is $u^{1/3}q^{i/3}(1-u^{2}q^{2i})^{-1}N^{1/3}$ (again see for example \cite{johansson2000shape} or Theorem~\ref{t.uniform lower tail} ahead). One can think of these fluctuations, once rescaled by this expression, as being approximately distributed according to the GUE Tracy-Widom distribution. The latter has a negative mean and, a calculation as in the previous subsection shows that the accumulated loss on the fluctuation scale across all the big squares is finite, in particular of order $(\log q^{-1})^{-1}|\log \log q^{-1}| N^{1/3}$ (ignoring the dependence on $u$). This means that if we can control the geometric LPP values across all the squares and use appropriate tools on concentration of sums of independent random variables, we will obtain a lower tail inequality for $x_N(N)$.

(In fact, the true behavior of $x_N(N)$ should be $f_q N - \Theta((\log q^{-1})^{-1}N^{1/3})$, i.e., the fluctuation term should not have the $\log\log$ factor. Our approach does not seem able to achieve this, and we discuss this more ahead in Section~\ref{s.log gamma remark}, along with some of the consequences of the appearance of the extra $\log\log$ factor.)

To apply concentration inequalities, we will need control over all the constituent geometric LPP problems. Observe that as the big squares get farther into the environment, the parameter $u^2q^{2i}$ of the geometric random variables goes to zero. So, in fact, we need a tail bound on the geometric LPP problems which is \emph{uniform} in the parameter $q$ essentially in the entire range $(0,1)$.

Now, the literature contains extremely sharp estimates on the upper and lower tails of geometric LPP for any fixed parameter $q$ \cite{baik2001optimal}. Unfortunately, these estimates are not stated uniformly in $q$ in the required range, and the method of proof does not seem like it would yield such an estimate. Indeed, the arguments rely on steepest descent analysis of contour integrals, and the resulting contours implicitly depend on $q$, thus making it difficult to extract uniform-in-$q$ estimates. Thus we need to prove new results. The following is our second main result and obtains a uniform lower tail in the entire parameter range of $q$. Here,
\begin{equation}\label{e.mu_q definition}
\mu_q = \frac{(1+q^{1/2})^2}{1-q}.
\end{equation}

\begin{theorem}\label{t.uniform lower tail}
Let $T_N$ be the LPP value from top to bottom of an $N\times N$ square in an environment given by \iid Geo($q$) random variables. There exist positive constants $c$, $x_0$, and $N_0$ such that, for $q\in(0, 1)$, $N\geq N_0$, and $x > x_0$,
\begin{align*}
\P\left(T_N \leq (\mu_q-1) N - x\cdot \frac{q^{1/6}}{1-q}N^{1/3}\right) \leq \exp(-cx^{3/2}).
\end{align*}

\end{theorem}

We next make some remarks on aspects of this result before outlining how to use Theorem~\ref{t.uniform lower tail} to complete the proof of Theorem~\ref{mt.q-pushtasep bound}.

\begin{remark}[Effective range of $x$]
Observe that $\mu_q-1 = \smash{\frac{2q^{1/2}(1+q^{1/2})}{1-q}}$, so the first order term $(\mu_q-1)N = O(q^{1/2}N/(1-q))$. Thus, for $x>C(q^{1/3}N^{2/3})$ for some fixed constant $C$, the probability is actually zero (since $T_N\geq 0$ always). For this reason it will be enough to prove the theorem for $x< \delta(q^{1/2}N)^{2/3}$ for some small $\delta>0$; then for $\delta(q^{1/2}N)^{2/3}\leq x \leq C(q^{1/2}N)^{2/3}$ one can obtain the claimed bound by modifying the constant $c$, and beyond that the bound holds trivially.
\end{remark}

\begin{remark}[Effective range of $q$]
Though $q$ is allowed to be arbitrarily close to zero, the statement is really only meaningful when $q$ is lower bounded by a constant times $N^{-2}$. This is simply because when $q=o(N^{-2})$, then $(\mu_q-1)N = O(q^{1/2}N)= o(1)$; similarly, the fluctuation scale is also $o(1)$. As a result the upper bound on $x$ of $\delta(q^{1/2}N)^{2/3}$ under which we need to prove the theorem also becomes $o(1)$, where it is trivial. This effective lower bound on $q$ reflects the fact that $q=\Theta(N^{-2})$ is the regime in which the number of points in $[1,N]^2\cap\Z^2$ where the geometric random variable is non-zero is $O(1)$, and, more precisely, converges to a Poisson random variable; thus the geometric LPP problem converges to Poissonian LPP (see \cite{johansson-toprows}).%
\end{remark}

\begin{remark}[Tail exponent of $3/2$]\label{r.geometric lpp tail exponent}
While the tail bound we obtain is $\exp(-cx^{3/2})$, the true lower tail behavior is $\exp(-cx^3)$ as proven in \cite{baik2001optimal} for fixed $q$. The same should be true uniformly in $q$ as well, i.e., we expect the inequality in Theorem~\ref{t.uniform lower tail} with the RHS replaced by $\exp(-cx^3)$ to be true.

As we said earlier, the more usual method of obtaining lower tail bounds via steepest descent analysis of Riemann-Hilbert problems does not appear to be suited to obtain uniform estimates. Instead, we utilize a method, often referred to in the literature as ``Widom's trick'',
which was first introduced by Widom \cite{widom2002convergence} to reduce the task to understanding the trace of the kernel operator of the Meixner ensemble, a determinantal point process associated to geometric LPP via the RSK correspondence. Widom's trick essentially treats the points of the Meixner ensemble as being independent, ignoring the repulsive behavior determinantal point processes exhibit. This simplifies the task of obtaining a lower tail bound, but at the cost of only yielding a tail bound with exponent $3/2$. 

This can likely be upgraded to the full cubic tail exponent uniformly in $q$ using bootstrapping arguments developed in \cite{ganguly2020optimal}, but one would first have to obtain similar uniform lower tail estimates (i.e., with a non-optimal tail exponent like $3/2$, though the framework in \cite{ganguly2020optimal} requires only stretched exponential tails) to points displaced (on the $N^{2/3}$ scale) from $(N,N)$. We will not pursue this here as it is not necessary for our bounds on $q$-pushTASEP, but extending our argument to other directions would involve considering more complicated formulas. Indeed, a formula for the factorial moments of the empirical distribution of the Meixner ensemble (see Section~\ref{s.intro.meixner analysis}) simplifies in the $(N,N)$ case (see \eqref{e.factorial moment formula}) and in the case of other directions one would need to perform asymptotic analysis on an additional layer of summation and handle an extra direction parameter in a uniform way in all of the estimates.
\end{remark}

\begin{remark}[Comparison to exponential LPP]
The cubic lower tail exponent has also been obtained in the model of exponential LPP \cite{ledoux2010,basu2019lower}; however, the method there is very different and crucially relies on the connection of that LPP value to the top eigenvalue of the Laguerre Unitary Ensemble random matrix theory model \cite{johansson2000shape} and a certain tridiagonal representation of the same. Geometric LPP does not seem to have any analogous connection to random matrix theory so such techniques are not applicable. We also note that, since exponential random variables have a scale invariance property, one obtains tail estimates for exponential LPP for any rate of the random variables by considering only rate 1. Thus the delicacy of uniformity in the $q$-parameter for geometric LPP that we must deal with also has no analogue in the exponential case.
\end{remark}

\subsubsection{Meixner ensemble analysis} \label{s.intro.meixner analysis}
Let us finally say a few words about what we need to know about the Meixner operator's trace. It is well-known (but proven here for completeness in Lemma~\ref{l.expression for trace}, see also \cite{ledoux2005deviation}) that the trace can be expressed in terms of the upper tail of the expected empirical distribution $\nu_{q,N}$ of the Meixner ensemble. We then need to obtain a lower bound on the upper tail of $\nu_{q,N}$. An argument of Ledoux given in the context of the GUE in \cite{ledoux2005deviation} suggests that this can be accomplished by obtaining sharp asymptotics for the moments of $\nu_{q,N}$. In our context, this means that the estimates need to be sharp in both their $q$ and $N$ dependencies. 

We obtain these estimates by first doing a careful analysis of formulas available for the factorial or Pochhammer moments of $\nu_{q,N}$ (i.e., $\E[X(X-1)\cdots(X-k+1)]$ for $X\sim \nu_{q,N}$) from \cite{ledoux2005distributions}, performing a Laplace method type argument for sums instead of integrals. We then develop arguments to convert these estimates into ones for the polynomial moments. These two tasks comprise the bulk of the technical content of the paper. The difficulty of obtaining these estimates comes primarily from the high order of the moments needed to adapt Ledoux's idea to our setting: indeed, it turns out that we need moments up to order $N$, unlike in the GUE case where order $N^{2/3}$ sufficed, and with a particular $q$-dependence for the error term (see Remark~\ref{r.q^1/2 in error term}). 

Further, in Ledoux's exposition in \cite{ledoux2005deviation} for the GUE, estimates were obtainable for the polynomial moments directly, unlike here where we must first start with factorial moments. Since $X\sim\nu_{q,N}$ with extremely high probability takes values of order $N$, the factor by which $X^k$ and $X(X-1)\cdots (X-k+1)$ typically differ when $k=O(N)$ becomes substantial, in fact, exponential in $N$. Thus the transfer between factorial and polynomial moments becomes delicate, and an exact cancellation needs to happen between factors which appear in the asymptotics for the factorial moments (coming from the value of the maximizer of the exponent in Laplace's method) and the exponential-in-$N$ discrepancy factor just mentioned. 
It would be interesting to see if there is some other more direct method to obtain the polynomial moment estimates which avoids these cancellations or approaches them in a more systematic way.

However, even with this cancellation, it does not seem tractable to move directly from factorial to polynomial moments. The strategy we instead adopt also makes use of the ``layer cake representation'' of the polynomial moments (as well as of other quantities): $\E[X^k] = \int_0^\infty kt^{k-1}\P(X>t)\,\diff t$. This equality shows that, if we have sharp upper bounds for the upper tail of $X$ (with the bounds being applicable in the entire tail), we can get sharp upper bounds for the polynomial moments of $X$; here by sharp we mean with the right dependencies on the various parameters in the exponent (and not the correct coefficient in the exponent, which we will not be concerned with). 

We obtain such probability tail bounds using Markov's inequality combined with the factorial moment asymptotics and the cancellation; in fact, even ignoring the fact that we need moments of order $N$ for our ultimate applications, we require estimates on the same order moments here too in order to obtain the sharp exponentially decaying upper tails (in particular, to get the right dependence on $N$ in Proposition~\ref{p.crude upper tail bound}). Next turning to the lower bound on the polynomial moments, we develop a related but slightly more complicated argument for which also only the upper bounds on the tail of $X\sim \nu_{q,N}$ suffice.

With these precise upper bounds on the upper tail of $\nu_{q,N}$ (which recall is the Meixner ensemble's mean empirical distribution) in hand, it is essentially immediate to also obtain uniform-in-$q$ upper tail estimates for geometric LPP with the correct $3/2$ tail exponent, and we record it below (though we do not need this estimate for any of our arguments concerning $q$-pushTASEP):

\begin{theorem}\label{t.uniform upper tail}
Let $T_N$ be the LPP value from top to bottom of an $N\times N$ square in an environment given by \iid Geo($q$) random variables. There exist positive constants $c$, $C$, $x_0$, and $N_0$ such that, for $q\in(0, 1)$, $N\geq N_0$, and $x_0 \leq x \leq (q^{1/2}N)^{2/3}$,
\begin{align*}
\P\left(T_N \geq (\mu_q-1) N + x\cdot \frac{q^{1/6}}{1-q}N^{1/3}\right) \leq C\exp(-cx^{3/2}).
\end{align*}
For $x\geq (q^{1/2}N)^{2/3}$, the inequality holds with the RHS replaced by $Cq^{-1/4}N^{-1/2}(1+q^{1/6}xN^{-2/3})^{-cN}$.
\end{theorem}

\subsubsection{Tying it together}

With Theorem~\ref{t.uniform lower tail}, the last ingredient is a concentration inequality. The inequality must take into account the fact that the scale of the random variables is decreasing. Typical concentration inequalities are for sub-Gaussian tail decay (while here we only have tail exponent $3/2$) and are for deviations from the mean (while our estimates are from the law of large numbers centering). While there are results in the literature for different tail decays, e.g. \cite{stretched-exp-concentration}, adapting these to address the second point directly to our setting results in a constant order loss for each term being summed, independent of the scale of the summand. This is too lossy as we have an infinite number of terms. Instead we redo the arguments establishing these bounds, which ultimately rely on estimates on the moment generating function, in such a way to fit our applications.
With this final step, we will obtain Theorem~\ref{mt.q-pushtasep bound}.

\begin{remark}[An argument for the lower tail of $x_N(T)$]
As we saw, the conceptual heart of the argument consisted of finding a concatenation of paths which, to first order, has the same weight as the law of large numbers \eqref{e.first form of LLN} for the model. Now, if we were interested in $x_N(T)$ for general $T$, there is also a representation of it in terms of a periodic LPP problem, where the environment consists of periodic rectangles of dimension $N\times T$ instead of $N\times N$ squares as here. However, in such an environment, it is not clear what concatenation of paths would achieve the correct first order weight, and this is why we restrict to $T=N$ in this paper. We leave the general $T$ case for future work.
\end{remark}

\subsection{A remark on convergence to the log-gamma free energy}\label{s.log gamma remark}
Though not needed for the results in this paper, we also note that, as proven in \cite{matveev2016q}, the $q\to 1$ limit of $\resc$, when renormalized correctly, is the free energy of the log-gamma polymer introduced in \cite{seppalainen2012scaling} (we refer to the reader to that paper for the precise definition of the model).
Indeed for example, setting $q=\exp(-\varepsilon)$ and $u=\exp(-A\varepsilon)$ for a fixed $A>0$, \cite[Theorem~8.7]{matveev2016q} tell us that $\varepsilon(x_N(N)-(2N-1)\varepsilon^{-1}\log \varepsilon^{-1})$ converges in distribution to the log-gamma free energy where the parameters of the inverse gamma random variables are all $2A$.
It can be checked that the appropriately normalized $q\to 1$ limit of $f_q$ (as defined in \eqref{e.fq definition}) is indeed the law of large numbers for the log-gamma polymer.

However, notice that the centering term for the convergence is $(2N-1)\varepsilon^{-1}\log\varepsilon^{-1}$, while the first order behavior (in $N$) we calculated in \eqref{e.lln expression} via the connection to LPP, when written in terms of $\varepsilon$, was $2N\varepsilon^{-1}\log\varepsilon^{-1}$. In other words, there is a discrepancy of $\varepsilon^{-1}\log\varepsilon^{-1}$. This comes from the earlier noted point that the fluctuation scale we are able to prove (when written in terms of $\varepsilon$) is $\varepsilon^{-1}\log\varepsilon^{-1}$, unlike the true fluctuation scale of $\varepsilon^{-1}$ suggested by \cite{matveev2016q}; equivalently, our lower tail bound (for $x_N(N)$ and not $\resc$) only kicks in after $\varepsilon^{-1}\log \varepsilon^{-1}N^{1/3}$ into the tail.
For this reason, unfortunately, our tail bounds do not survive in the limit to provide a tail bound on the log-gamma free energy.

The ultimate source of the discrepancy in the fluctuation scale that we are able to prove is that we are approximating the true LPP value in the infinite cylinder by a sum of LPP values in $N\times N$ big squares. In more detail, the portion of our path in the $i$\textsuperscript{th} big square from the top suffers a loss of order $q^{i/6}(1-q^{2i})^{-1} N^{1/3}$ (ignoring the $u$-dependence), essentially because this is the scale of fluctuations on which the LPP value in this box converges to the Tracy-Widom distribution, and the latter has a negative mean. Observing that $1-q^{2i}$ is approximately $\varepsilon i$ up to constants when $q=\exp(-\varepsilon)$, we see that the sum of this loss from $i=1$ to $\infty$ yields an overall loss of of order $N^{1/3}$ times $\varepsilon^{-1}\sum_{i=1}^\infty i^{-1}e^{-i\varepsilon/6} = \varepsilon^{-1}\log(1-e^{-\varepsilon/6}) \approx \varepsilon^{-1}\log(\varepsilon^{-1})$. Thus to avoid the lossy factor of $\log \varepsilon^{-1}$ it seems one would need a different scheme of approximation.

As mentioned earlier in Section~\ref{s.recent work in polymer}, very recent work \cite{landon2022upper} has established a bound (with tail exponent $3/2$) on the lower tail of the free energy of a \emph{stationary} version of the log-gamma model (as well as other polymer models such as the O'Connell-Yor model) using a Burke property enjoyed by the model (proved in \cite{OY01} which also introduced the model, and analogous properties in models such as exponential LPP were used to obtain exponentially decaying tail estimates earlier in \cite{emrah2020right,emrah2021optimal,emrah2022coupling}), which gives access to formulas for the moment generating function of the free energy. For the O'Connell-Yor model, in \cite{landon2022tail}, these bounds were transferred to the non-stationary version of the model using geometric arguments involving the polymer measure introduced in \cite{flores2014fluctuation}, and the tail exponent was upgraded to the optimal $3$ by adapting geometric methods from \cite{ganguly2020optimal}. One expects that a similar program would deliver the corresponding bounds in the log-gamma case as well.

\subsection*{Acknowledgements}
The authors thank Matteo Mucciconi for explaining the proof of Theorem~\ref{t.q-pushTASEP to lpp}, as well as Philippe Sosoe and Benjamin Landon for sharing their preprint \cite{landon2022tail} with us in advance. We also thank the anonymous referees for their thorough reading of the paper and helpful comments. I.C. was partially supported by the NSF through grants DMS:1937254, DMS:1811143, DMS:1664650,
as well as through a Packard Fellowship in Science and Engineering, a Simons Fellowship, and
a W.M. Keck Foundation Science and Engineering Grant. I.C. also thanks the Pacific Institute for the Mathematical Sciences (PIMS) and Centre de Recherches Mathematique (CRM), where some materials were developed in conjunction with the lectures he gave at the PIMS-CERM summer school in probability (which is partially supported by NSF grant DMS:1952466). M.H. was partially supported by NSF grant DMS:1937254.

\section{Widom's trick applied to the lower tail in geometric LPP}

In the next two sections we will prove Theorem~\ref{t.uniform lower tail}, which provides an upper bound on the lower tail of the LPP value in an \iid geometric environment, uniform in the parameter of the geometric random variables.

The argument relies on a trick introduced by Widom in \cite{widom2002convergence}, which we explain next.

\subsection{Widom's trick}

We first need to introduce the Meixner ensemble, the determinantal point process associated to geometric LPP via the RSK correspondence. The fact that it is determinantal is the crucial property for Widom's argument.

\begin{definition}[Meixner ensemble]
First let $\mugeo$ denote the Geo($q$) distribution on $\N_0 := \N\cup\{0\}$, i.e., the distribution with discrete weights given by
\begin{align*}
\mugeo(\{x\}) = (1-q)q^x.
\end{align*}
For $q\in(0,1)$ and $N\in\N$, the $N\times N$ \emph{Meixner ensemble} is a determinantal point process on $\N_0$ with kernel given, for $x,y\in\N_0$ with $x\neq y$ and with respect to $\mugeo$, by
\begin{align}\label{e.meixner kernel}
 K_{N}(x,y) = \frac{\kappa_{N-1}}{\kappa_N}\cdot\frac{M_N(x)M_{N-1}(y) - M_{N-1}(x)M_N(y)}{x-y}; %
 \end{align}
here $M_N=\kappa_N x^N + \kappa_{N-1}x^{N-1}+ \ldots +\kappa_0$ are the orthonormal polynomials (which we call the Meixner polynomials, though it differs from the classical Meixner polynomials by a constant multiple due to the normalization) with respect to $\mugeo$.
The second factor on the right-hand side of \eqref{e.meixner kernel} makes sense for $x,y\in\R$, and so the $x=y$ case can be defined by taking the appropriate limit.
\end{definition}

Here is the relation between the Meixner ensemble and the geometric LPP value.

\begin{proposition}[Proposition~1.3 of \cite{johansson2000shape}]\label{p.meixner and geo}
Fix $q\in(0,1)$ and $N\in\N$. Let $\lambda_1 \geq \lambda_2 \geq  \ldots \lambda_N$ be distributed according to the $N\times N$ Meixner ensemble and let $T_N$ be the LPP value in the environment of \iid Geo($q$) random variables. Then $\smash{T_N \stackrel{d}{=} \lambda_1-N+1}$.
\end{proposition}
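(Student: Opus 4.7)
The plan is to use the Robinson--Schensted--Knuth (RSK) correspondence to transport the random environment to a random partition, and then identify the induced law on partitions (after strict-ification into a point configuration) with the Meixner ensemble.

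First I would apply RSK to the $N\times N$ matrix $W=(w_{ij})$ of \iid $\mrm{Geo}(q)$ weights. RSK is a bijection between $N\times N$ non-negative integer matrices and pairs $(P,Q)$ of semistandard Young tableaux of a common shape $\mu$ with entries in $\{1,\ldots,N\}$; Greene's theorem identifies $\mu_1$ with the maximum of the sums $\sum_{(i,j)\in\pi} w_{ij}$ over monotone lattice paths $\pi$, which in our context says $\mu_1=T_N$ (after the harmless reflection that interchanges down-moving and up-right paths). The Cauchy--Littlewood identity
\begin{align*}
\sum_\mu s_\mu(x_1,\ldots,x_N)\, s_\mu(y_1,\ldots,y_N) = \prod_{i,j}\frac{1}{1-x_iy_j}
\end{align*}
then converts the product geometric law on $W$ into a Schur measure on $\mu$ with both specializations equal to $(\sqrt q,\ldots,\sqrt q)$: one finds
\begin{align*}
\P(\mu)=(1-q)^{N^2}\,s_\mu(\sqrt q,\ldots,\sqrt q)^2.
\end{align*}

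Next I would change coordinates to the strictly decreasing tuple $\lambda_i=\mu_i+(N-i)$, so that $\lambda_1>\lambda_2>\cdots>\lambda_N\geq 0$. By the bialternant (Weyl) formula for Schur polynomials at equal arguments, $s_\mu(\sqrt q,\ldots,\sqrt q)^2$ factors, up to a $\mu$-independent constant, as a squared Vandermonde in the $\lambda_i$ multiplied by $\prod_i q^{\lambda_i}$. Hence the joint density of $(\lambda_i)_i$ with respect to counting measure is proportional to
\begin{align*}
\prod_{i<j}(\lambda_i-\lambda_j)^2 \prod_i \mugeo(\{\lambda_i\}).
\end{align*}
This is the standard orthogonal-polynomial form of the $N$-point Meixner ensemble, so $(\lambda_1,\ldots,\lambda_N)$ has the law of the ensemble introduced above (with kernel built from the orthonormal polynomials $M_N$ with respect to $\mugeo$). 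Reading off $T_N=\mu_1=\lambda_1-(N-1)$ then closes the argument.

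The only real obstacle is bookkeeping of conventions: the constant $\sqrt q$ arises because the Cauchy identity requires $x_iy_j=q$ in order to reproduce the geometric factors $(1-q)q^{w_{ij}}$; the Greene statement ``$\mu_1 = \text{LPP}$'' is typically formulated for up-right paths and must be reconciled with the downward-path definition of $T_N$ used here; and one should verify that the orthogonal polynomials with respect to $\mugeo$ are precisely the Meixner polynomials of the paper's definition. None of these is a genuine difficulty, and the remainder of the argument is a direct computation.
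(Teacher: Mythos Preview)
The paper does not give its own proof of this proposition: it is stated as a citation (Proposition~1.3 of \cite{johansson2000shape}) and used as a black box. Your sketch is correct and is essentially the argument Johansson gives---RSK applied to the geometric weight matrix, Greene's theorem to identify $\mu_1$ with the LPP value, the Cauchy identity to recognize the Schur measure, and the principal specialization/bialternant formula together with the shift $\lambda_i=\mu_i+N-i$ to reach the Vandermonde-squared-times-$\mugeo$ form of the Meixner ensemble. There is nothing further to compare.
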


With this background we may explain Widom's trick. The fact that $(\lambda_1, \ldots, \lambda_N)$ is determinantal with kernel $K_{N}$ given by \eqref{e.meixner kernel} implies, using the Cauchy-Binet formula, that, for any $t\in\R$,
\begin{align*}
\P\left(\lambda_1 \leq t\right) = \det\left(I_N - K^t_{N}\right),
\end{align*}
where $K^t_{N}$ can be written as the Gram matrix of the Meixner polynomials, i.e.,
\begin{align}
K^t_{N} = \bigl(\langle M_{\ell -1}, M_{k-1}\rangle_{\ell^2(\{t, t+1, \ldots\},\, \mugeo)}\bigr)_{1\leq k,\ell\leq N}. \label{e.K^t definition}
\end{align}
The fact that Gram matrices are positive semi-definite implies that the eigenvalues of $\smash{K^t_{N}}$ are non-negative; also, we may write, for any unit vector $\smash{u\in\R^N}$ and with $\smash{g(x) = \sum_{i=1}^N u_iM_{i-1}(x)}$,
\begin{align*}
1= \sum_{i=1}^N u_i^2 = \langle g,g\rangle_{\ell^2(\N_0, \mugeo)} &= \langle g\one_{\cdot<t},g\one_{\cdot<t}\rangle_{\ell^2(\N_0, \mugeo)} + \langle g\one_{\cdot\geq t},g\one_{\cdot\geq t}\rangle_{\ell^2(\N_0, \mugeo)}\\
&\geq \langle g\one_{\cdot\geq t},g\one_{\cdot\geq t}\rangle_{\ell^2(\N_0, \mugeo)}
= \langle g,g\rangle_{\ell^2(\{t, t+1,\ldots\}, \mugeo)} = u^T K^t_{N} u,
\end{align*}
which in turn implies that the eigenvalues of $K^t_{N}$ are at most 1.

Let us label the eigenvalues of $K^t_{N}$ as $\rho^t_1, \ldots, \rho^t_N$. Since $1-x\leq e^{-x}$ for $x\in[0,1]$,
\begin{align*}
\P\left(\lambda_1 \leq t\right) = \det\left(I_N - K^t_{N}\right) = \prod_{i=1}^N (1-\rho^t_i) &\leq \exp\left(-\sum_{i=1}^N\rho^t_i\right) = \exp\left(-\mrm{Tr}(K^t_{N})\right).
\end{align*}

Thus Widom's trick reduces the problem of bounding the lower tail to understanding the trace of an associated operator. This in turn can be accomplished by lower bounding the upper tail of the expected empirical distribution $\nu_{q,N}$ of the Meixner ensemble, defined precisely by
\begin{equation}\label{e.nu_q,N definition}
\nu_{q,N} = \E\left[\frac{1}{N}\sum_{i=1}^N\delta_{\lambda_i}\right].
\end{equation}
We record the connection between the operator's trace and the tail of $\nu_{q,N}$ next.

\begin{lemma}\label{l.expression for trace}
For any $t\in \N$, $\mrm{Tr}(K^t_{N}) = N \nu_{q,N}([t,\infty))$.
\end{lemma}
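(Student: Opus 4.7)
The proof is a short computation tying together the Christoffel--Darboux form of the kernel \eqref{e.meixner kernel} with the general formula for the one-point intensity of a determinantal point process. The plan has three steps.

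\textbf{Step 1: Rewrite the kernel as a sum of squares on the diagonal.} The right-hand side of \eqref{e.meixner kernel} is precisely the Christoffel--Darboux form of the reproducing kernel for the first $N$ orthonormal polynomials $M_0, M_1, \ldots, M_{N-1}$ with respect to $\mugeo$. Hence
\begin{align*}
K_N(x,y) = \sum_{k=0}^{N-1} M_k(x) M_k(y),
\end{align*}
and in particular $K_N(x,x) = \sum_{k=0}^{N-1} M_k(x)^2$.

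\textbf{Step 2: Express $\nu_{q,N}$ in terms of the diagonal of the kernel.} Because $(\lambda_1, \ldots, \lambda_N)$ is a determinantal point process on $\N_0$ with kernel $K_N$ with respect to the reference measure $\mugeo$, the expected number of points at $x \in \N_0$ equals $K_N(x,x)\mugeo(\{x\})$. Summing over $x \geq t$ and using the definition \eqref{e.nu_q,N definition} of $\nu_{q,N}$,
\begin{align*}
N\, \nu_{q,N}([t,\infty)) = \E\bigl[\#\{i : \lambda_i \geq t\}\bigr] = \sum_{x \geq t} K_N(x,x)\, \mugeo(\{x\}) = \sum_{x \geq t} \sum_{k=0}^{N-1} M_k(x)^2 \,\mugeo(\{x\}).
\end{align*}

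\textbf{Step 3: Swap sums and identify the trace.} All summands are non-negative, so Fubini allows interchanging the two sums. Recognizing the inner sum as the diagonal matrix element of $K^t_N$ as defined in \eqref{e.K^t definition},
\begin{align*}
\sum_{k=0}^{N-1}\sum_{x \geq t} M_k(x)^2\, \mugeo(\{x\}) = \sum_{k=1}^{N} \langle M_{k-1}, M_{k-1}\rangle_{\ell^2(\{t, t+1, \ldots\},\, \mugeo)} = \mrm{Tr}(K^t_N).
\end{align*}
Combining the displays in Steps 2 and 3 yields the claim.

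There is no real obstacle: the only non-trivial ingredient is the Christoffel--Darboux identity, which is standard for orthonormal polynomials on any measure, and the general identity between the diagonal of a DPP kernel and the one-point intensity. Both are classical and can be invoked without further elaboration.
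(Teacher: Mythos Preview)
Your proof is correct and follows essentially the same route as the paper's: both invoke Christoffel--Darboux to write $K_N(x,x)=\sum_{k=0}^{N-1}M_k(x)^2$, use the one-point intensity identity $\E[\sum_i f(\lambda_i)]=\int f(x)K_N(x,x)\,\diff\mugeo(x)$ for determinantal processes, and then match with the diagonal of \eqref{e.K^t definition}. The only cosmetic difference is that the paper rederives the one-point intensity formula by expanding the multiplicative functional identity to first order in $\varepsilon$, whereas you invoke it directly as a standard fact.
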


\begin{proof}%
First we observe that, from \eqref{e.K^t definition},
\begin{align}\label{e.trace formula}
\mrm{Tr}(K^t_{N}) = \sum_{\ell=0}^{N-1}\langle M_{\ell-1}, M_{\ell-1}\rangle_{\ell(\{t, t+1, \ldots\}, \mugeo)} = \int_{t}^\infty \sum_{\ell=0}^{N-1} M_{\ell}^2\,\diff \mugeo.
\end{align}
Now since $(\lambda_1, \ldots, \lambda_N)$ is determinantal with kernel $K_N$ with respect to $\mugeo$, it is a standard fact of the theory of determinantal point processes (or see \cite[Proposition~1.2]{ledoux2005deviation}) that, for any bounded measurable $f:\N_0\to\R$,
\begin{align*}
\E\left[\prod_{i=1}^N[1+f(\lambda_i)]\right] = \sum_{r=0}^N \frac{1}{r!}\int_{\N_0^r}\prod_{i=1}^r f(x_i)\det(K_{N}(x_i, x_j))_{1\leq i,j\leq r}\,\diff \mugeo(x_1)\cdots \diff\mugeo(x_r).
\end{align*}
Replacing $f$ by $\varepsilon f$, taking the $\varepsilon\to 0$ limit, and thereby equating the order $\varepsilon$ terms on both sides (since the constant-in-$\varepsilon$ terms on both sides are easily seen to be $1$), we obtain that
\begin{align*}
\E\left[\sum_{i=1}^N f(\lambda_i)\right] = \int_{\N_0} f(x) K_N(x,x) \,\diff \mugeo(x).
\end{align*}
By the Christoffel-Darboux formula and \eqref{e.meixner kernel}, $K_N(x,x) = \sum_{\ell=0}^{N-1} M_{\ell}(x)^2$. With this, taking $f(x) = \one_{x\geq t}$ and using \eqref{e.trace formula} yields the claim.
\end{proof}

So the task is now to obtain a lower bound on the upper tail of $\nu_{q,N}$. The bound we prove is stated in the next theorem, and its proof will be the main goal of the remainder of this section as well as of the next two.

\begin{theorem}\label{t.mean empirical law lower bound}
Let $X$ be distributed as $\nu_{q,N}$ as defined in \eqref{e.nu_q,N definition} and let $\mu_q$ be as in \eqref{e.mu_q definition}. There exist positive absolute constants $c$, $C$, and $N_0$ such that, for $N\geq N_0$, $\varepsilon\in[CN^{-2/3},1]$, and $q\in[\varepsilon^3, 1)$,
\begin{align*}
\P\left(X\geq \mu_qN(1-q^{1/6}\varepsilon)\right) \geq c\varepsilon^{3/2}.
\end{align*}

\end{theorem}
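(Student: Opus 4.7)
The plan is to reduce the lower bound on the upper tail of $\nu_{q,N}$ to sharp asymptotics on its high polynomial moments, following the moment-method scheme of Ledoux \cite{ledoux2005deviation}. The approach proceeds in three steps: (i) extract moment asymptotics for $\nu_{q,N}$ from known Pochhammer-moment formulas; (ii) apply a truncated Paley--Zygmund-type inequality to convert these into a tail lower bound; (iii) optimize the parameters to match the target estimate.

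For Step~(i), I would start from the explicit formulas for the Pochhammer (falling factorial) moments
\begin{equation*}
\E_{\nu_{q,N}}[(X)_k] := \E_{\nu_{q,N}}\!\bigl[X(X-1)\cdots(X-k+1)\bigr]
\end{equation*}
available in \cite{ledoux2005distributions}. These admit sum/product representations in terms of $q$; the core analytic task is to extract their leading behavior as $k\to\infty$ in a window of the form $k_0 \leq k \leq c_1 N^{2/3}$, carefully tracking the joint $(k,q)$-dependence. Converting to ordinary polynomial moments $\E_{\nu_{q,N}}[X^k]$ via Stirling numbers of the first kind should then contribute only lower-order corrections when $k \ll \mu_q N$. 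The target asymptotic is
\begin{equation*}
\E_{\nu_{q,N}}[X^k] \;\geq\; c_0\, q^{-1/4}\,\frac{(\mu_q N)^k}{k^{3/2}},
\end{equation*}
where the factor $q^{-1/4}$ encodes the edge density of $\nu_{q,N}$ at its right edge $\mu_q N$ (consistent with the fluctuation scale $q^{1/6}(1-q)^{-1}N^{1/3}$ appearing in Theorem~\ref{t.uniform lower tail}), and the $k^{-3/2}$ reflects the generic square-root edge behavior of unitary-invariant ensembles.

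For Steps~(ii) and~(iii), with $M := \mu_q N$ and $\alpha \in (0,1)$ to be chosen, the decomposition $\E_{\nu_{q,N}}[X^k] = \E_{\nu_{q,N}}[X^k\one_{X\leq\alpha M}] + \E_{\nu_{q,N}}[X^k\one_{X > \alpha M}]$, combined with $X^k\one_{X\leq\alpha M} \leq (\alpha M)^k$ and Cauchy--Schwarz on the second summand, yields
\begin{equation*}
\nu_{q,N}\bigl([\alpha M, \infty)\bigr) \;\geq\; \left(\frac{\E_{\nu_{q,N}}[X^k] - (\alpha M)^k}{\sqrt{\E_{\nu_{q,N}}[X^{2k}]}}\right)^{\!\!2}.
\end{equation*}
Setting $\alpha = 1 - q^{1/6}\varepsilon$ and $k \asymp (q^{1/6}\varepsilon)^{-1}$ (with a logarithmic correction if necessary, so that $(\alpha M)^k \approx \exp(-k q^{1/6}\varepsilon)$ is at most half of the right-hand side of the moment bound), and then substituting the asymptotic from Step~(i) at both $k$ and $2k$, the right-hand side is of order $q^{-1/4}/k^{3/2} \asymp q^{-1/4}(q^{1/6}\varepsilon)^{3/2} = \varepsilon^{3/2}$, as required. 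The hypothesis $q \geq \varepsilon^3$ is precisely what keeps the chosen $k$ within the admissible window for the moment asymptotic.

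The main obstacle is Step~(i): obtaining the moment asymptotic \emph{uniformly} in $q \in (0,1)$, and in particular tracking the precise $q$-scaling of the leading constant. The Pochhammer-moment formulas feature $q$-Pochhammer symbols and $q$-binomial coefficients whose behavior changes character in the two limits $q \to 0$ and $q \to 1$, so it takes careful asymptotic analysis to handle both ends simultaneously and to verify that the subleading corrections arising in the Stirling-number conversion do not swamp the leading term in the relevant joint regime of $k$ and $q$.
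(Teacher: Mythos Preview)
Your overall plan is the paper's: prove two-sided moment asymptotics $\E[X^k]\asymp (q^{1/6}k)^{-3/2}(\mu_qN)^k$ (this is Theorem~\ref{t.meixner poly moment bounds}, obtained by analyzing the factorial-moment formula and then converting) and feed them into a Cauchy--Schwarz/Paley--Zygmund scheme. But the particular inequality you write down in Step~(ii) loses a logarithm, and the paper uses a slightly different one precisely to avoid this.

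In your inequality the two competing terms are $\E[X^k]\sim q^{-1/4}k^{-3/2}M^k$ and $(\alpha M)^k=(1-q^{1/6}\varepsilon)^kM^k$; only the first carries the small prefactor $q^{-1/4}k^{-3/2}$. Making the second at most half the first forces $(1-q^{1/6}\varepsilon)^k\lesssim q^{-1/4}k^{-3/2}$, hence $k\gtrsim (q^{1/6}\varepsilon)^{-1}\log\bigl((q^{1/6}\varepsilon)^{-1}\bigr)$ rather than $k\asymp(q^{1/6}\varepsilon)^{-1}$. Plugging this back into your final expression $q^{-1/4}k^{-3/2}$ yields $\varepsilon^{3/2}/\bigl[\log((q^{1/6}\varepsilon)^{-1})\bigr]^{3/2}$, not $c\varepsilon^{3/2}$. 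The paper instead works one level higher: it bounds $\E[X^{2k}\one_{X<\alpha M}]\le \E[X^k]\,(\alpha M)^k$ and compares with $\E[X^{2k}]$, then applies Cauchy--Schwarz against $\E[X^{4k}]$. Now \emph{both} competing terms carry the factor $(q^{1/6}k)^{-3/2}$, so one only needs $(1-\varepsilon)^k$ below a fixed constant, which $k=W\varepsilon^{-1}$ achieves with no logarithm. A smaller point: the factorial-to-polynomial conversion is not a lower-order correction in the needed range---the paper shows the factorial moment carries an extra factor $\exp(-k^2/(2\mu_qN))$, which for $k\sim N^{2/3}$ is $\exp(-\Theta(N^{1/3}))$, and removing it (Section~\ref{s.moment bounds}) requires more than a Stirling-number expansion.
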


In fact, the lower bound of order $\varepsilon^{3/2}$ is sharp and we prove a matching order upper bound in Proposition~\ref{p.upper tail bound inner}.

With Theorem~\ref{t.mean empirical law lower bound} and Widom's trick, Theorem~\ref{t.uniform lower tail}'s proof is straightforward.

\begin{proof}[Proof of Theorem~\ref{t.uniform lower tail}]
Recall from Proposition~\ref{p.meixner and geo} that if $\lambda_1$ is distributed as the largest particle of the Meixner ensemble, then $\smash{T_N \stackrel{d}{=} \lambda_1-N+1}$. From Lemma~\ref{l.expression for trace}, for any $t\in \N$,
\begin{align*}
\P\left(\lambda_1\leq t\right) \leq \exp\Bigl(-N\nu_{q,N}([t,\infty))\Bigr).
\end{align*}
So we see that, for any $\varepsilon>0$,
\begin{align*}
\P\Bigl(T_N \leq (\mu_q-1)N - \mu_qNq^{1/6}\varepsilon)\Bigr)
&= \P\Bigl(\lambda_1 \leq \mu_qN(1-q^{1/6}\varepsilon)\Bigr)\\
&\leq \exp\Bigl\{-N\nu_{q,N}\bigl([\mu_qN(1-q^{1/6}\varepsilon), \infty)\bigr)\Bigr\}.
\end{align*}
By Theorem~\ref{t.mean empirical law lower bound}, there exist positive constants $c$ and $c_0$ such that, for all $q\in[\varepsilon^3, 1)$ and $C N^{-2/3}\leq \varepsilon\leq 1$,
\begin{align*}
\nu_{q,N}\bigl([\mu_qN(1-q^{1/6}\varepsilon), \infty)\bigr) \geq c\varepsilon^{3/2}.
\end{align*}
Putting the above together with $\varepsilon = xN^{-2/3}(1-q)^{-1}\mu_q^{-1} = xN^{-2/3} (1+q^{1/2})^{2}$ for $x>C$ and adjusting the constant in the exponent gives
\begin{align*}
\P\left(T_N \leq (\mu_q-1)N - x \frac{q^{1/6}}{1-q}N^{1/3}\right) \leq \exp\left(-cx^{3/2}\right)
\end{align*}
when $q\geq x^3N^{-2}$ and $xN^{-2/3}\leq 1$ (since $q\geq \varepsilon^3$ and $\varepsilon\leq 1$ are required to apply Theorem~\ref{t.mean empirical law lower bound}), which are both implied by the hypothesis that $x\leq (q^{1/2}N)^{2/3}$. This completes the proof.
\end{proof}

To prove Theorem~\ref{t.mean empirical law lower bound}, we rely on a strategy of Ledoux explained in \cite[Section 5]{ledoux2005deviation}. It relies on getting strong estimates on polynomial moments of the mean empirical distribution $\nu_{q,N}$. The bounds we prove are the following.

\begin{theorem}\label{t.meixner poly moment bounds}
Let $X$ be distributed according to $\nu_{q,N}$ as defined in \eqref{e.nu_q,N definition}. There exist positive $C$, $\alpha_0$, and $N_0$ such that for any $N\geq N_0$ and $(k,q)$  satisfying $k\leq (\alpha_0N\wedge q^{-1/6}N^{2/3})$ and $q\in [k^{-2},1)$,
\begin{align*}
C^{-1}(q^{1/6}k)^{-3/2}(\mu_q N)^k \leq \E[X^k] \leq C(q^{1/6}k)^{-3/2}(\mu_q N)^k .
\end{align*}
\end{theorem}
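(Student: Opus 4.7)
My plan is to establish the sharp two-sided bound on $\E[X^k]$ by starting from the explicit formulas for the factorial (Pochhammer) moments $F_k := \E[X(X{-}1)\cdots(X{-}k{+}1)]$ of $X \sim \nu_{q,N}$ recorded in \cite{ledoux2005distributions}. In the Meixner setting, $F_k$ admits a closed form as a finite weighted sum over a single summation index, with summands that are products of binomial coefficients in $N$, $k$, and the summation index, together with an overall factor of $(q/(1-q))^k$; this structure reflects the three-term recurrence and orthogonality of the Meixner polynomials.

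The first technical task is a saddle-point (Laplace) analysis of this sum. After substituting Stirling's asymptotic formula for each binomial, the logarithm of the summand becomes a smooth concave function of the summation index $j$ with a unique interior maximizer $j_\star = j_\star(N,k,q)$. A second-order Taylor expansion at $j_\star$ produces a Gaussian approximation whose width, combined with the peak value, yields the target scaling: the factor $(q^{1/6}k)^{-3/2}$ arises from the square root of the $q$-dependent Hessian at $j_\star$, while the peak value combined with the $(q/(1-q))^k$ prefactor reconstitutes $(\mu_q N)^k$. The principal obstacle in this step is uniformity of the asymptotics in $q$ across the wide range $[k^{-2},1)$: the standard Laplace-method implicit constants degenerate near either endpoint, so I would replace asymptotic Stirling approximations by explicit non-asymptotic bounds (of Robbins type) and track all errors uniformly in every parameter. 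The hypothesis $q \ge k^{-2}$ is precisely calibrated to keep $j_\star$ at a macroscopic distance from the boundaries of the summation range, so that the Gaussian approximation remains valid and the tails of the sum are negligible.

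The final step converts factorial-moment bounds into polynomial-moment bounds, and this is the most delicate part. The direction $F_k \le \E[X^k]$ is immediate from $x^{(k)} \le x^k$ for $x \in \mathbb{Z}_{\ge 0}$, but this is lossy because the pointwise ratio $x^k/x^{(k)}$ can grow as large as $\exp(k^2/(2x))$ on the bulk of the support of $\nu_{q,N}$, which is not $O(1)$ when $k \sim N^{2/3}$. To recover the sharp bound in both directions, I would use the Stirling number expansion $X^k = \sum_{j=0}^{k} S(k,j)\, X^{(j)}$ to write $\E[X^k] = \sum_j S(k,j) F_j$, and then perform a secondary Laplace-type analysis on this $j$-sum. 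The factorial-moment asymptotics obtained above, combined with known asymptotics for the Stirling numbers of the second kind $S(k,j)$, should identify a narrow window of dominant $j$, and a careful evaluation will show that the sum reproduces the target scaling $(q^{1/6}k)^{-3/2}(\mu_q N)^k$ up to a universal multiplicative constant, yielding both matching upper and lower bounds simultaneously.
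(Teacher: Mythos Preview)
Your first two steps track the paper's approach closely, but your description of the Laplace output in step~2 is off in an important way. The factorial moment does \emph{not} land on the target scaling $(q^{1/6}k)^{-3/2}(\mu_q N)^k$; the Laplace analysis (this is the paper's Theorem~\ref{t.factorial moment asymptotics}) produces
\[
M^q(k,N)\;\asymp\;(q^{1/6}k)^{-3/2}(\mu_q N)^k\exp\!\Bigl(-\tfrac{k^2}{2\mu_q N}\Bigr),
\]
and the extra exponential is \emph{not} $O(1)$ in the regime $k\sim N^{2/3}$. You seem to realize this in step~3 (your ``lossy'' remark), so step~2 may just be imprecisely stated, but it is worth being explicit: removing that factor is the whole content of the conversion step.

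Your step~3 is a genuinely different route. The paper does \emph{not} pass through Stirling numbers; instead it converts by truncation. For the upper bound it writes $\E[X^k]=\E[X^k\one_{X\ge R}]+\E[X^k\one_{X<R}]$ with $R=\mu_q N(1-\varepsilon^{1/2})$, bounds the first piece by $M^q(k,N)\exp(k^2/(2R)+O(k^3/R^2))$ via the pointwise inequality $(x)_k\ge x^k\exp(-k^2/(2x)-k^3/(3x^2))$, and the second piece trivially by $R^k$; choosing $\varepsilon=k^{-1}$ balances. For the lower bound it uses $\E[X^k]\ge\bigl(M^q(k,N)-\E[X^{2k}]^{1/2}\P(X>R)^{1/2}\bigr)\exp(k(k-1)/(2R))$ with $R=\mu_q N(1+N^{-1/3})$, feeding in the just-proved upper bound and a crude Markov tail estimate. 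The point is that on the effective support $X\approx\mu_q N$, so the pointwise ratio $x^k/(x)_k\approx\exp(k^2/(2\mu_q N))$ exactly cancels the deficit in the factorial asymptotic. This is short, uses only the factorial bound at the single order $k$ (and $2k$), and never leaves the parameter window $q\ge k^{-2}$.

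Your Stirling-number route $\E[X^k]=\sum_j S(k,j)F_j$ is plausible in principle, and a back-of-the-envelope shows the dominant $j$ sits at $j=k-m_\star$ with $m_\star\sim k^2/(2\mu_q N)$, which does reproduce the missing $\exp(k^2/(2\mu_q N))$ heuristically. But two issues make it more delicate than the paper's argument. First, when $k\sim N^{2/3}$ one has $m_\star\sim k^{1/2}$, which is exactly the crossover regime for the standard asymptotic $S(k,k-m)\sim k^{2m}/(2^m m!)$; you would need refined, uniform Stirling-number estimates there. Second, you would need the factorial bounds for a range of $j<k$, and the hypothesis $q\ge k^{-2}$ does \emph{not} imply $q\ge j^{-2}$ for $j<k$ (near the edge $q=k^{-2}$ it fails for every $j<k$), so you would also have to control those terms separately. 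Neither obstacle is fatal, but the paper's truncation argument sidesteps both.
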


Observe that the moments grow to first order like $(\mu_q N)^k$, which reflects that we expect $\nu_{q,N}$ to be supported on $[0,\mu_q N]$ (though more precisely there is a decaying-in-$N$ amount of mass beyond this point, which we will in fact upper bound in Proposition~\ref{p.crude upper tail bound}). The polynomial dependence on $k$ is what captures the behaviour of the tail of $\nu_{q,N}$ near this right edge, and this is the basic observation of Ledoux' argument. Indeed, the exponent of $-3/2$ for $k$ is what gives the $3/2$ exponent of $\varepsilon$ in Theorem~\ref{t.mean empirical law lower bound}.

Note also that we allow $k$ to go up to a rather large value; essentially of order $N$, since, when $q = \Theta(N^{-2})$, then $q^{-1/6}N^{2/3} = \Theta(N)$. We will in fact need the estimate to allow such large values of $k$ in the application, namely the proof of Theorem~\ref{t.mean empirical law lower bound}. We expand on this slightly  ahead in Remark~\ref{r.why such large k}. Finally we remark that the expression $q^{-1/6}N^{2/3}$ in the upper bound on $k$ is a technical feature, and may be an artifact, of the conversion we perform from factorial to polynomial moments (indeed, this expression does not appear in the bound on $k$ in the upcoming Theorem~\ref{t.factorial moment asymptotics} on factorial moment estimates).

We will prove Theorem~\ref{t.meixner poly moment bounds} in Sections~\ref{s.factorial bounds} and \ref{s.moment bounds}; the upper and lower bounds are separated into Propositions~\ref{p.stronger poly moment upper bound meixner} and \ref{p.polynomial lower bound meixner} respectively.

We conclude this section by using Theorem~\ref{t.meixner poly moment bounds} to implement Ledoux' argument to establish Theorem~\ref{t.mean empirical law lower bound}.

\begin{proof}[Proof of Theorem~\ref{t.mean empirical law lower bound}]
First, by the Cauchy-Schwarz inequality, we have
\begin{align}\label{e.cauchy-schwarz}
\E[X^{2k}\one_{X\geq \mu_qN(1-q^{1/6}\varepsilon)}] \leq \E[X^{4k}]^{1/2}\P\left(X\geq \mu_qN(1-q^{1/6}\varepsilon)\right)^{1/2}.
\end{align}
By Theorem~\ref{t.meixner poly moment bounds}, when $q\geq k^{-2}$, $4k\leq (\alpha_0N\wedge q^{-1/6}N^{2/3})$ and $N\geq N_0$ (conditions we assume in the rest of the proof),
\begin{align*}
\E[X^{4k}] \leq C_1(q^{1/6}k)^{-3/2}(\mu_q N)^{4k}.
\end{align*}
It is also easy to see that
\begin{equation}\label{e.cauchy schwarz next step}
\begin{split}
\E[X^{2k}\one_{X\geq \mu_qN(1-q^{1/6}\varepsilon)}]%
&= \E[X^{2k}] - \E[X^{2k}\one_{X< \mu_{q}N(1-q^{1/6}\varepsilon)}]\\
&\geq \E[X^{2k}] - \E[X^{k}]\left(\mu_{q}N(1-q^{1/6}\varepsilon)\right)^k.
\end{split}
\end{equation}
Now, from Theorem~\ref{t.meixner poly moment bounds}, under the same conditions on $(q,k)$ as above,
\begin{align*}
\E[X^{2k}] \geq C_2(q^{1/6}k)^{-3/2}(\mu_qN)^{2k},
\end{align*}
while, again from Theorem~\ref{t.meixner poly moment bounds} and the same conditions on $(q,k)$,
\begin{align*}
\E[X^{k}] \leq C_3(q^{1/6}k)^{-3/2}(\mu_q N)^{k}.
\end{align*}
Since overall it holds (under the condition that the expression in the parentheses is positive) from \eqref{e.cauchy-schwarz} and \eqref{e.cauchy schwarz next step} that
\begin{align*}
\P\bigl(X\geq \mu_qN(1-q^{1/6}\varepsilon)\bigr) \geq \E[X^{4k}]^{-1}\left(\E[X^{2k}] - \E[X^k](\mu_qN)^k(1-q^{1/6}\varepsilon)^k\right)^2,
\end{align*}
substituting the above recalled bounds on the moments of $X$ yields (cancelling out all the common factors of $(\mu_qN)^{4k}$ on the right-hand side)
\begin{align}
\P\bigl(X\geq \mu_qN(1-q^{1/6}\varepsilon)\bigr)\nonumber
&\geq C_1^{-1}(q^{1/6}k)^{3/2}\left[C_2(q^{1/6}k)^{-3/2} - C_3(q^{1/6}k)^{-3/2}(1-q^{1/6}\varepsilon)^k\right]^2\nonumber\\
&\geq C_1^{-1}(q^{1/6}k)^{-3/2}\left[C_2 - C_3\exp(-q^{1/6}\varepsilon k)\right]^2. \label{e.last CS inequality}
\end{align}
We need to set $k$ such that the expression in the square brackets is uniformly positive; it is sufficient if
\begin{align}\label{e.smallness condition in CS argument}
\frac{C_3}{C_2}\exp\left(-q^{1/6}\varepsilon k\right) < \frac{1}{2}.
\end{align}
We will verify that this condition is met with $k = Wq^{-1/6}\varepsilon^{-1}$ for a large absolute constant $W$ to be chosen: the expression on the LHS of the previous display becomes
\begin{align}\label{e.small condition after k choice}
\frac{C_3}{C_2} \exp\left(-W\right).
\end{align}
We pick $W$ such that $\frac{C_3}{C_2}\exp(-W) < \frac{1}{2}$. With this our choice of $k$ has been made, and we next verify that the conditions that $k$, $q$, $\varepsilon$ must satisfy to apply Theorem~\ref{t.meixner poly moment bounds} indeed hold. After that we will check that \eqref{e.smallness condition in CS argument} holds.

Recall that under the hypotheses of Theorem~\ref{t.mean empirical law lower bound} that we are proving, we have a lower bound on $\varepsilon$ of the form $N^{-2/3}$ times an absolute constant that we are free to choose and which we label $c_0^{-1}$, i.e., $\varepsilon$ satisfies $\varepsilon > c_0^{-1}N^{-2/3}$. We also have $q \geq \varepsilon^3$. We need to verify two conditions: (i) $q\geq k^{-2}$ and (ii) $k\leq \frac{1}{4}\min(\alpha_0N, q^{-1/6}N^{2/3})$.

For (i), it is easy to check that $q \geq k^{-2}$ is equivalent to $q\geq W^{-3}\varepsilon^3$ for our choice of $k$. If $W \geq 1$ (which we can ensure by raising $W$ if needed), then clearly our hypothesis that $q\geq \varepsilon^3$ implies $q\geq k^{-2}$.

Next we set $c_0>0$ small enough (depending on $W$) that the condition $\varepsilon > c_0^{-1}N^{-2/3}$ implies $k \leq \frac{1}{4}\min(\alpha_0N, q^{-1/6}N^{2/3})$ (i.e., that $\varepsilon > 4\max(W, \alpha_0^{-1})N^{-2/3}$, i.e., $c_0< \frac{1}{4}\min(W^{-1}, \alpha_0)$). This verifies that the upper bound (ii) above on $k$ holds, and thus that the hypotheses of Theorem~\ref{t.meixner poly moment bounds} are satisfied.

Using \eqref{e.smallness condition in CS argument} in \eqref{e.last CS inequality} and substituting $k=Wq^{-1/6}\varepsilon^{-1}$ in the same, we obtain
\begin{align*}
\P\bigl(X\geq \mu_qN(1-q^{1/6}\varepsilon)\bigr)
&\geq \tfrac{1}{4}C_1^{-1}C_2^2W^{-3/2}\varepsilon^{3/2}.
\end{align*}
Thus, overall, our choice of parameters yields that there exist positive absolute constants $c$ and $c_0$, such that for all $\varepsilon>c_0^{-1}N^{-2/3}$ and $q\geq \varepsilon^3$,
\begin{align*}
\P\bigl(X\geq \mu_qN(1-q^{1/6}\varepsilon)\bigr) \geq c\varepsilon^{3/2}.
\end{align*}
This completes the proof.
\end{proof}

\begin{remark}
Unfortunately, it does not seem possible to adapt the above argument to work with estimates on the factorial moments directly instead of first converting to polynomial moments, as such an adaptation would allow us to avoid the analysis to go from Theorem~\ref{t.factorial moment asymptotics} to Theorem~\ref{t.meixner poly moment bounds}. 

The reason is essentially that factorial moments do not work well with the Cauchy-Schwarz inequality: $\E[(X_k)] \leq \E[(X)_k^2]^{1/2}$, but we do not have any control on the RHS via estimates on factorial moments. This is in stark contrast to polynomial moments, where applying Cauchy-Schwarz results in another polynomial moment, which we do have estimates on.
\end{remark}

\begin{remark}\label{r.why such large k}
We observe that the proof used the polynomial moment of $X\sim\nu_{q,N}$ of order $q^{-1/6}\varepsilon^{-1}$. When $\varepsilon = N^{-2/3}$ (which falls within the parameter ranges we are interested in since such a value of $\varepsilon$ correspond to the KPZ fluctuation scale), the moments we are considering become of order $q^{-1/6}N^{2/3}$ (which further becomes $N$ if $q=\Theta(N^{-2})$), and this is why we need Theorem~\ref{t.meixner poly moment bounds} to allow $k$ to go up to $q^{-1/6}N^{2/3}$.
\end{remark}

\section{Sharp factorial moment bounds}\label{s.factorial bounds}

Here we prove Theorem~\ref{t.meixner poly moment bounds} on sharp bounds for the moments of the expected empirical distribution $\nu_{q,N}$ (as defined in \eqref{e.nu_q,N definition}) of the Meixner ensemble. We adopt the notation
$$(x)_k := x(x-1)\cdots(x-k+1).$$
While there is no explicit formula available for the polynomial moments of $\nu_{k,N}$, there \emph{is} one for the factorial moments, i.e., moments of the form $\E[(X)_k] = \E[X(X-1)\cdots(X-k+1)]$. Indeed letting $X$ be distributed according to $\nu_{q,N}$, \cite[Lemma~5.2]{ledoux2005distributions} states that
$$M^q(k,N) := \E[(X)_k] = \frac{q^k}{(1-q)^k}\sum_{i=0}^kq^{-i}\binom{k}{i}^2\cdot\sum_{\ell=i}^{N-1} \frac{(\ell+k-i)!}{(\ell-i)!}.$$
In fact, this simplifies \cite[eq. (3.5)]{cohen2020moments} to
\begin{align}\label{e.factorial moment formula}
M^q(k,N) = \frac{q^k}{(1-q)^k}\frac{1}{N}\cdot \frac{1}{k+1}\sum_{i=0}^kq^{-i}\binom{k}{i}^2\cdot \frac{(N+k-i)!}{(N-i-1)!}.
\end{align}

 Our approach is to use this formula to obtain asymptotics on the factorial moments, and then later convert them into polynomial moments.

\subsection{The factorial moment asymptotics}

\begin{theorem}\label{t.factorial moment asymptotics}
Let $X$ be distributed according to $\nu_{q,N}$ as defined in \eqref{e.nu_q,N definition}, and let $\mu_q = (1+q^{1/2})^2/(1-q)$ be as in \eqref{e.mu_q definition}. There exist positive constants $C$, $N_0$, and $k_0$ such that for all $N\geq N_0$, $k_0\leq k\leq \frac{1}{2}N$, and $q\in [k^{-2},1)$, $M^q(k,N)$ is upper and lower bounded by (and letting $\alpha:=k/N$)
\begin{align*}
(q^{1/6}k)^{-3/2}(\mu_qN)^k\exp\left(-N\left[(\mu_q-\alpha)\log\left(1-\frac{\alpha}{\mu_q}\right) + \alpha\right]\right)
\end{align*}
up to factors of $C\exp\left(Cq^{1/2}\frac{k^3}{N^2}\right)$ or its inverse. Further, the upper bound holds for all $q\in(0,1)$.
\end{theorem}

As we said, $\nu_{N,q}$ has right edge of support roughly $\mu_qN=\smash{\frac{(1+q^{1/2})^2}{1-q}}N$, and this is what gives that to first order, $M^q(k,N)N^{-k}$ grows like $\smash{\mu_q^k}$. The main task is to obtain the correct polynomial dependence on $k$ and $q$, namely $q^{-1/4}k^{-3/2}$, of the same.

It turns out that the exponential factor in the theorem is the quantity to go between factorial and polynomial moments. In Section~\ref{s.moment bounds}, when we use Theorem~\ref{t.factorial moment asymptotics} to prove Theorem~\ref{t.meixner poly moment bounds} on sharp upper and lower bounds on the polynomial moments, this factor will get canceled. In fact, the expression in the exponent that one gets by directly doing Laplace's method on the sum (as is our broad strategy to prove Theorem~\ref{t.factorial moment asymptotics} as we indicated in the introduction) is not the one recorded above. The recorded expression is instead obtained by identifying an additional cancellation as mentioned in the idea of proofs Section~\ref{s.proof ideas}; this cancellation is isolated as Lemma~\ref{l.factorial to poly cancellation} in Appendix~\ref{app.sum asymptotics}, where most of the technical estimates required for Theorem~\ref{t.factorial moment asymptotics} are done.

\begin{remark}\label{r.q^1/2 in error term}
Let us also emphasize the fact that the coefficient of $k^3/N^2$ in the error term is of order $q^{1/2}$ and not unit order, which would have been easier to obtain (and would suffice if uniform-in-$q$ bounds are not required). As we saw at the end of the previous section, the $q^{1/2}$ factor was important in obtaining the uniform lower bound in Theorem~\ref{t.mean empirical law lower bound}, as it ensures that $q^{1/2}k^3/N^2$ does not blow up as $q\to 0$ for the choice of $k$ (which recall grows as $q^{-1/6}$). 
\end{remark}

Before turning to giving the proof of Theorem~\ref{t.factorial moment asymptotics} in full (though as mentioned much of the technical estimates have been relegated to Appendix~\ref{app.sum asymptotics}), let us outline the strategy. First, we will use  Stirling's approximation and a precise form of the fact that $\binom{k}{i} \approx \exp(k H(i/k))$ (with $H(x) = -x\log x - (1-x)\log(1-x)$ the entropy function) to write the sum in \eqref{e.factorial moment formula} as, approximately and up to absolute constants,
\begin{equation}\label{e.sum for intuition}
\begin{split}
\MoveEqLeft[6]
\frac{N^kq^k}{(1-q)^k}\cdot k^{-1}\cdot\frac{1}{k+1}\sum_{i=0}^k\frac{k^2}{i(k-i)}\exp\biggl(i\log q^{-1} + 2k\cdot H(i/k)\\
&  + (k+1)\log\left(1+\frac{k-i}{N}\right) + \left(N-i-1\right)\log\left(1+\frac{k+1}{N-i-1}\right) -(k+1)\biggr).
 \end{split}
\end{equation}
Then, the idea is to obtain asymptotics for the sum using Laplace's method. Indeed, if we were to write $k$ as $\alpha N$ and regard the sum (along with the $(k+1)^{-1}$ factor as being approximately the integral
\begin{align*}
\int_0^1\frac{1}{x(1-x)}\exp\left(N f_\alpha(x)\right)\,\diff x,
\end{align*}
where 
\begin{align}\label{e.f_alpha first defn}
f_\alpha(x) = \alpha x\log q^{-1} + 2\alpha H(x) + \alpha \log(1+\alpha(1-x)) + (1-\alpha x)\log\left(1+\frac{\alpha}{1-\alpha x}\right) -\alpha,
\end{align}
then, letting $g(x) = (x(1-x))^{-1}$,
one would expect from the Laplace method heuristic that the sum in \eqref{e.sum for intuition} is approximately
\begin{align*}
(N|f_\alpha''(x_0)|)^{-1/2}\exp(Nf_\alpha(x_0))g(x_0),
\end{align*}
up to constants, where $x_0 = x_0(\alpha)$ is the maximizer of $f_\alpha$ on $[0,1]$. Evaluating $f_\alpha(x_0)$ and $g(x_0)$ will yield the claimed $q$ and $k$ dependencies in Theorem~\ref{t.factorial moment asymptotics}.

There are existing results in the literature, for example \cite{masoero2014laplace}, on Laplace's method for sums which also obtain the correct constant coefficient multiplying the previous display. However these are not directly useful to us: we need all our estimates to be uniform in the parameter $q$, which can be difficult to verify after applying black-box theorems. For this reason we perform the analysis explicitly ourselves; but we will not be concerned with obtaining the correct constant dependencies as these are not necessary for our ultimate applications. However, since there is not much novel or probabilistic content in these computations, we defer to Appendix~\ref{app.sum asymptotics} the proofs of these bounds (which are stated as Propositions~\ref{p.ledoux sum upper bound} and \ref{p.ledoux sum lower bound}).

\begin{proof}[Proof of Theorem~\ref{t.factorial moment asymptotics}]
In the following, we will include error terms of the form $\pm O(r(k,N))$ in front to emphasize that we allow the error to be positive or negative, as long as it is in magnitude bounded by $C r(k,N)$ for some constant $C$ not depending on $q$, $i$, $k$, $\alpha$, or $N$, in the range $(0< \alpha \leq \tfrac{1}{2})$). Similarly, a factor of $\Theta(1)$ indicates that the equality holds up to a factor of an absolute constant.

We will first obtain a bound on the summands in \eqref{e.factorial moment formula}. We start with bounding
$(N+k-i)!/(N-i-1)!$
using Stirling's approximation (non-asymptotic form) to obtain
\begin{align*}
\frac{(N+k-i)!}{(N-i-1)!} &= \frac{\sqrt{2\pi (N+k-i)}}{\sqrt{2\pi(N-i-1)}} \exp\Bigl[(N+k-i)\log(N+k-i)\\
&\qquad - (N+k-i) - (N-i-1)\log(N-i-1) + (N-i-1) \pm O(N^{-1})\Bigr]\\
&= \Theta(1)\cdot\exp\left[(N-i-1) \log\frac{N+k-i}{N-i-1} + (k+1)\bigl(\log(N+k-i)-1\bigr)\right]\\
&= \Theta(1)\cdot\exp\left[(N-i-1) \log\left(1+\frac{k+1}{N-i-1}\right) + (k+1)\bigl(\log(N+k-i)-1\bigr)\right].
\end{align*}
Substituting the above expression into that of $M^q(k,N)$ \eqref{e.factorial moment formula} and multiplying by $N^{-k}$, we obtain
\begin{align*}
M^q(k,N)N^{-k}
&= \frac{\Theta(1)q^k}{(1-q)^k(k+1)}\sum_{i=0}^kq^{-i}\binom{k}{i}^2 \exp\Bigl[(N-i-1) \log\left(1+\frac{k+1}{N-i-1}\right)\\
&\qquad\qquad + (k+1)\log (N+k-i) -(k+1)\log N -(k+1)\Bigr]\\
&= \frac{\Theta(1)q^k}{(1-q)^k(k+1)}\sum_{i=0}^kq^{-i}\binom{k}{i}^2 \exp\Bigl[(N-i-1) \log\left(1+\frac{k+1}{N-i-1}\right)\\
&\qquad\qquad + (k+1)\log\left(1 + \frac{k-i}{N}\right) -(k+1)\Bigr].
\end{align*}
Now since
\begin{align*}
\binom{k}{i} = \Theta(1) \sqrt{\frac{k}{i(k-i)}} \exp(kH(i/k)),
\end{align*}
where $H(p) = -p\log p -(1-p)\log(1-p)$ is the entropy function, we obtain that $M^q(k,N)N^{-k}$ is equal to
\begin{align*}
\MoveEqLeft[16]
\frac{\Theta(1)q^k}{(1-q)^k}\cdot k^{-1}\cdot\frac{1}{k+1}\sum_{i=0}^k\frac{k^2}{i(k-i)}\exp\Bigl[i\log q^{-1} + 2kH(i/k) + (N-i-1) \log\left(1+\frac{k+1}{N-i-1}\right)\\
&\qquad + (k+1)\log\left(1 + \frac{k-i}{N}\right) -(k+1)\Bigr].
\end{align*}
Rewriting the previous display a little, we obtain
\begin{equation}
\begin{split}
\MoveEqLeft[6]
\frac{\Theta(1)q^k}{(1-q)^k}\cdot k^{-1}\cdot\frac{1}{k+1}\sum_{i=0}^k\frac{1}{\tfrac{i}{k}(1-\tfrac{i}{k})}\exp\Bigl[N\Bigl\{\tfrac{i}{N}\log q^{-1} + \frac{2k}{N}\cdot H(i/k)\\
& + \left(1-\frac{i+1}{N}\right)\log\left(1+\frac{k+1}{N-i-1}\right) + \frac{k+1}{N}\log\left(1+\frac{k-i}{N}\right) -\frac{k+1}{N}\Bigr\}\Bigr].\label{e.factorial moment simplified upper bound}
\end{split}
\end{equation}
Note that the expression inside the curly brackets in the exponent is $f_\alpha(i/k)$ with $\alpha=k/N$ and $f_\alpha$ as defined in \eqref{e.f_alpha first defn}.
The sum is upper and lower bounded using Propositions~\ref{p.ledoux sum upper bound} and \ref{p.ledoux sum lower bound} (which perform the Laplace's method-type asymptotics mentioned above) in Appendix~\ref{app.sum asymptotics}, up to a factor of $C\exp(Cq^{1/2}k^3/N^2)$ or its inverse, and for $N\geq N_0$, $k_0\leq k \leq N$, and $q\geq k^{-2}$, by
\begin{align*}
q^{-1/4} k^{1/2}\left[\frac{\left(1+q^{1/2}\right)^2}{q}\right]^k\exp\left(-N\left[(\mu_q-\alpha)\log\left(1-\frac{\alpha}{\mu_q}\right)\right]\right).
\end{align*}
Substituting this into \eqref{e.factorial moment simplified upper bound} yields that $M^q(k,N)N^{-k}$ is equal to
\begin{align}\label{e.M(k,N) near final bound}
\Theta(1)\cdot k^{-3/2} q^{-1/4}\left[\frac{\left(1+q^{1/2}\right)^2}{1-q}\right]^k\exp\left(-N\left[(\mu_q-\alpha)\log\left(1-\frac{\alpha}{\mu_q}\right)\right] \pm O(1)q^{1/2}\frac{k^3}{N^2}\right),
\end{align}
which completes the proof after recalling that $\mu_q = (1+q^{1/2})^2/(1-q)$.
\end{proof}

\section{Translating from factorial to polynomial moments}\label{s.moment bounds}

Now we convert the factorial moment bounds of Theorem~\ref{t.factorial moment asymptotics} to polynomial bounds and thus prove Theorem~\ref{t.meixner poly moment bounds}. The basic idea is the following. Notice that the difference between the bounds in the two theorems is essentially the factor of $\exp(-N[(\mu_q-\alpha)\log(1-\frac{\mu_q}{\alpha})+\alpha])$ in the bounds for the factorial moments, where $\alpha = k/N$. The presence of this factor is a problem because it is much smaller than $O(1)$. So our goal is to show that that factor goes away when we move to the $k$\textsuperscript{th} polynomial moment.

Now, we can write the ratio $[X(X-1)\cdots (X-k+1)]/X^k$ as
$$\prod_{i=1}^{k-1}(1-iX^{-1}) = \exp\left(-\sum_{i=1}^k\log\left(1-\frac{i}{X}\right)\right).$$
If we believe that the support of $X$ essentially has upper boundary $\mu_q N$, then, when considering high powers of $X$, heuristically one should get the correct behavior when replacing $X$ by $\mu_q N$. Doing  so, we can recognize the sum in the previous display as $N$ times a Riemann sum of the integral $\int_0^\alpha \log(1-x/\mu_q)\,\diff x = (\alpha-\mu_q)\log(1-\frac{\alpha}{\mu_q})-\alpha$. This exactly cancels the extra factor we noted above.

Because this factor which needs to be cancelled will come up a number of times in this section, let us adopt some notation for it. For $\alpha\in(0,1)$, define $h_\alpha:[\alpha,\infty)\to\R$ by
\begin{align}\label{e.h_alpha}
h_\alpha(x) = (\alpha-x)\log\left(1-\frac{\alpha}{x}\right) - \alpha.
\end{align}
The proof we provide does not exactly make the above heuristic precise. While the conversion between factorial and polynomial expressions outlined above essentially holds for deterministic $x$ (see for example Lemma~\ref{l.polynomial moment upper bound} ahead), due to the randomness of $X$, the procedure is not able to provide sharp estimates on the $k$\textsuperscript{th} moment of $X$ in the entire range of $q$ and $k$ that we will require. What we will do instead is obtain sharp upper bounds on the upper tail of $X$ using the factorial moments, and then use the general formula (sometimes called the layer cake representation) $\E[X^k] = \int_0^\infty kx^{k-1}\P(X>x)\,\diff x$ and variants to obtain the sharp estimates on $\E[X^k]$ in the complete parameter range we need (as was discussed briefly in the introduction in Section~\ref{s.intro.meixner analysis}).

(In fact, for lower order moments it is possible to make the heuristic precise in a more direct way, without making use of the layer cake representation. For example, for moments of order $N^{2/3}$, the factor by which $\E[X^k]$ and $\E[(X)_k]$ differs becomes of unit order, and it is possible to approximate $h_\alpha(x)$ by $-\alpha^2/(2x) \pm O(\alpha^3)$ where recall $\alpha = k/N$, since $\alpha^3N = k^3/N^2 = \Theta(1)$ when $k=\Theta(N^{2/3})$. But because we require moments up to order $N$, the errors in these approximations become much too large and so we must keep all the expressions, such as $h_\alpha$, as they are and track all the cancellations.)

In the next Section~\ref{s.general moments lemmas} we collect some general statements connecting factorial and polynomial moments. In Section~\ref{s.upper bound on X upper tail} we use the estimates on factorial moments to obtain sharp upper bounds on $\P(X>x)$. Finally in Sections~\ref{s.general lemma application upper} and \ref{s.general to meixner, lower} we will use these estimates and the layer cake representation to obtain the upper and lower bounds respectively on $\E[X^k]$.

\subsection{General lemmas connecting factorial and polynomial moments}\label{s.general moments lemmas}

\begin{lemma}\label{l.polynomial moment upper bound}
Let $k, N, x\in\N$ with $k\leq x$ and let $\alpha = k/N$, $\beta=x/N$. Then, with $h_\alpha$ as in \eqref{e.h_alpha},
\begin{align*}
(x)_k \geq x^k\exp\Bigl(Nh_\alpha(\beta)\Bigr).
\end{align*}

\end{lemma}

\begin{proof}
We see that
\begin{align*}
(x)_k = x(x-1)\cdots(x-k+1)
= x^k \times \prod_{i=1}^{k-1}\left(1-\frac{i}{x}\right)
&=x^k\cdot\exp\left(\sum_{i=1}^{k-1}\log\left(1-\frac{i}{x}\right)\right).
\end{align*}
Now, the RHS is lower bounded by
\begin{align*}
x^k\cdot\exp\left(\sum_{i=1}^{k-1}\log\left(1-\frac{i}{\beta N}\right)\right) = x^k \cdot\exp\left(\sum_{i=1}^{\alpha N-1}\log\left(1-\frac{i}{\beta N}\right)\right).
\end{align*}
We recognize the sum in the exponent as $N$ times the left Riemann sum associated to the integral $\int_0^\alpha \log(1-x/\beta)\,\diff x = (\alpha-\beta)\log(1-\frac{\alpha}{\beta}) - \alpha$. Since $x\mapsto \log(1-x/\beta)$ is a decreasing function, the integral is a lower bound for the left Riemann sum, i.e., the previous display is lower bounded by
\begin{align*}
x^k \cdot\exp\left(N\left[(\alpha-\beta)\log\left(1-\frac{\alpha}{\beta}\right)-\alpha\right]\right).
\end{align*}
Rearranging completes the proof.
\end{proof}

\begin{lemma}\label{l.general polynomial moment lower bound}
Let $X$ be a non-negative integer-valued random variable. Then for any $k, N\in\N$ with $k\leq N/2$, $\alpha:= k/N$, and $\beta> \alpha$, with $h_\alpha$ as in \eqref{e.h_alpha}, 
$$\E[X^{k}] \geq \left(\E[(X)_k] - \E[(X)_k\one_{X> \beta N}]\right)\cdot\exp\Bigl(-Nh_\alpha(\beta)-\tfrac{1}{4}(\beta-\alpha)^{-1}\Bigr).$$
\end{lemma}

\begin{proof}
As in the previous proof, we see that, under the condition that $x\leq \beta N$,
\begin{align*}
(x)_k = x^k\times \prod_{i=1}^{k-1}\left(1-\frac{i}{x}\right)
= x^k\times \exp\left(\sum_{i=1}^{k-1}\log\left(1-\frac{i}{x}\right)\right)
\leq x^k\times \exp\left(\sum_{i=1}^{\alpha N-1}\log\left(1-\frac{i}{\beta N}\right)\right).
\end{align*}
We again recognize the sum as $N$ times the left Riemann sum of $\int_0^\alpha \log(1-x/\beta)\,\diff x = (\alpha-\beta)\log(1-\frac{\alpha}{\beta}) - \alpha$. The absolute value of the deviation between the Riemann sum of $f$ and the associated integral is bounded by $\smash{\sup_{x\in[0,\alpha]}|f'(x)|\alpha/N}$ (e.g. by bounding $|f(x)-f(i/N)|$ on $[i/N, (i+1)/N]$ by $\sup_{[0,\alpha]}|f'|/N$,  integrating the inequality on $[i/N, (i+1)/N]$, and summing over $i=1, \ldots \alpha N$), and we can evaluate this expression to be  $\smash{\alpha/((\beta-\alpha)N)}$ for our particular $f$. So we see that, when $x\leq \beta N$,
\begin{align}\label{e.deterministic factorial to polynomial upper bound}
(x)_k\leq x^k \exp\left(N\left[(\alpha-\beta)\log\left(1-\frac{\alpha}{\beta}\right) - \alpha\right] + (\beta-\alpha)^{-1}\alpha\right).
\end{align}

So with this, and since $X$ is non-negative, for any $\beta>\alpha$ and since $\alpha\leq \frac{1}{2}$,
\begin{align*}
\E[X^k] 
\geq \E[X^k\one_{X\leq \beta N}] 
&\geq \E\left[(X)_k\one_{X\leq \beta N}\right]\cdot\exp\left(N\left[(\beta-\alpha)\log\left(1-\frac{\alpha}{\beta}\right) + \alpha\right]-\tfrac{1}{2}(\beta-\alpha)^{-1}\right).
\end{align*}
Writing $(X)_k\one_{X\leq \beta N}$ as $(X)_k - (X)_k\one_{X> \beta N}$ completes the proof.
\end{proof}

In the next lemma we record some basic properties of $h_\alpha$ which will be useful ahead.

\begin{lemma}\label{l.h_alpha properties}
Recall $h_\alpha(x) = (\alpha-x)\log(1-\frac{\alpha}{x})-\alpha$ from \eqref{e.h_alpha}. Then $h_\alpha$ is increasing and, for $\alpha\in(0,\frac{1}{2}]$ and $x\geq y$,
\begin{align*}
h_\alpha(x) - h_\alpha(y) \leq \begin{cases}
\alpha^2y^{-2}(x-y) & y\geq 2\alpha\\
\alpha^2 & y\geq 1.
\end{cases}
\end{align*}
In particular, if $\alpha\in(0,\frac{1}{2}]$ and $x\geq y\geq 1$,
\begin{align*}
h_\alpha(x) - h_\alpha(y) \leq \alpha^2\bigl(y^{-2}(x-y)\wedge 1\bigr).
\end{align*}
\end{lemma}

\begin{proof}
That $h_\alpha$ is increasing is easily checked by differentiating.

Since $x\geq y$, $h_\alpha(x) \geq h_\alpha(y)$ as $h_\alpha$ is increasing. The lemma follows from the facts that (i) $h_\alpha$ is concave for $x>\alpha$, so that $h_\alpha(x) - h_\alpha(y) \leq (x-y) h_\alpha'(y)$, and noting by direct calculation that $h_\alpha'(y) = -\alpha/y-\log(1-\alpha/y) \leq \alpha^2/y^2$ for all $y\geq 2\alpha$ and $\alpha \in[0,\frac{1}{2}]$; and (ii) since $h$ is increasing and $y\geq 1$, $h_\alpha(x) - h_\alpha(y) \leq \lim_{z\to\infty} h_\alpha(z) - h_\alpha(1) = \alpha+(1-\alpha)\log(1-\alpha) \leq \alpha^2$ for all $\alpha\in[0,1]$.
\end{proof}

\subsection{Sharp upper bounds on the upper tail of $\nu_{q,N}$}\label{s.upper bound on X upper tail}

As we outlined earlier, for our ultimate goal of sharp asymptotics for $\E[X^k]$, we will need to obtain sharp upper bounds on expectations such as $\E[X^k]$ and $\E[(X)_k\one_{X\geq t}]$ (as appeared in Lemma~\ref{l.general polynomial moment lower bound}). For this we will need upper bounds on $\P(X\geq t)$ for appropriate $t$, which will then be used to estimate the expectations using the ``layer cake'' representation (i.e., a general form of the identity $\E[X^k] = \int_0^\infty kt^{k-1}\P(X\geq t)\,\diff t$).

In this section we obtain these sharp upper bounds on $\P(X\geq t)$, where $t = \mu_qN(1+q^{1/6}\varepsilon)$. The nature of the tail differs between $\varepsilon \in (-1,0)$ and $\varepsilon >0$; in the former case, it decays polynomially in $\varepsilon$ (indeed, Theorem~\ref{t.mean empirical law lower bound} asserts a lower bound of order $\varepsilon^{3/2}$, and we will prove a matching upper bound), while in the latter case it decays exponentially. These are captured in the next two propositions. We recall the definitions of $\mu_q$ from \eqref{e.mu_q definition} and the distribution $\nu_{q,N}$ as defined in \eqref{e.nu_q,N definition}.

\begin{proposition}\label{p.upper tail bound inner}
Let $X \sim\nu_{q,N}$. There exist positive constants $C$ and $N_0$ such that, for $N\geq N_0$, $\varepsilon\in(N^{-2/3},1)$ and $q\in[N^{-2},1)$,
\begin{align*}
\P\left(X\geq \mu_qN(1-q^{1/6}\varepsilon)\right) \leq C \varepsilon^{3/2}.
\end{align*}
\end{proposition}

\begin{proposition}\label{p.crude upper tail bound}
Let $X \sim\nu_{q,N}$. There exist positive constants $c$, $C$ and $N_0$ such that, for $N\geq N_0$, $\varepsilon > 0$, and $q\in[N^{-2}, 1)$, 
\begin{align*}
\P\left(X\geq \mu_qN(1+q^{1/6}\varepsilon)\right) \leq 
\begin{cases}
C \varepsilon^{-3/4} N^{-3/2}\exp\left(-c\varepsilon^{3/2}N\right) & \varepsilon\in(0,q^{1/3}]\\
C(q^{1/6}N)^{-3/2}(1+q^{1/6}\varepsilon)^{-cN} & \varepsilon \geq q^{1/3}.
\end{cases}
\end{align*}
When $\varepsilon\in[q^{1/3}, q^{-1/6}]$, we may replace $(1+q^{1/6}\varepsilon)^{-cN}$ by $\exp(-cq^{1/6}\varepsilon N)$ in the second inequality.
\end{proposition}

Observe the change in behaviour of the bound at $\varepsilon = q^{1/3}$; in particular, the coefficients in the exponent depend on $q$ when $\varepsilon > q^{1/3}$ and are no longer uniform. The reason for the change in the nature of the bound can be understood by recalling that the upper tail of $X$ is closely related to the upper tail of $T_N$, the LPP value in geometric LPP: $\P(T_N\geq t-N+1) \leq N\cdot \P(X\geq t)$ (we will prove this as well as use it in the proof of Theorem~\ref{t.uniform upper tail} on the upper tail of $T_N$ at the end of Section~\ref{s.upper bound on X upper tail} ahead). Setting $t=\mu_qN(1+q^{1/6}\varepsilon)$, we see that
$$\P\left(T_N\geq (\mu_q-1)N + \mu_qq^{1/6}\varepsilon N\right) \leq \P(X\geq \mu_qN(1+q^{1/6}\varepsilon)).$$
Now $\mu_q-1= O(q^{1/2}/(1-q))$ and, when $\varepsilon = q^{1/3}$, the deviation $\mu_qq^{1/6}\varepsilon N$ equals $O(q^{1/2}N/(1-q))$ as well, and so we are in the large deviation regime of geometric LPP. While in the moderate deviation regime we would expect universality (and hence the bounds are uniform in $q$), in the large deviation regime the bound should be expected to depend on the last passage percolation vertex distribution; in this case it is $\mrm{Geo}(q)$, and so it is not surprising that the bound depends on~$q$.

To prove Proposition~\ref{p.crude upper tail bound}, the basic idea will be to use Markov's inequality with the $k$\textsuperscript{th} factorial moment of $X$ for a $k$ which will be optimized over.

\begin{proof}[Proof of Proposition~\ref{p.upper tail bound inner}]
We may assume $\varepsilon <\frac{1}{2}$ by modifying the constant $C$. By Markov's inequality, Theorem~\ref{t.factorial moment asymptotics}, and Lemma~\ref{l.polynomial moment upper bound} (to lower bound the denominator), for any $N\geq N_0$, $k_0\leq k\leq \tfrac{1}{2}N$, and $q\in(0, 1)$ (and using that $k\leq N/2$, $\varepsilon < \frac{1}{2}$, and $q<1$ guarantees $\mu_qN(1-q^{1/6}\varepsilon)\geq k$ to apply Lemma~\ref{l.polynomial moment upper bound}),
\begin{align*}
\P\left(X\geq \mu_qN(1-q^{1/6}\varepsilon)\right)
&\leq \frac{\E[(X)_k]}{(\mu_qN(1-q^{1/6}\varepsilon))_k}\\
&\leq \frac{C(q^{1/6}k)^{-3/2}(\mu_qN)^k\exp\left(Nh_\alpha(\mu_q) + Cq^{1/2}\frac{k^3}{N^2}\right)}{(\mu_qN)^k(1-q^{1/6}\varepsilon)^k\exp\left(Nh_\alpha(\mu_q(1-q^{1/6}\varepsilon))\right)},
\end{align*}
where $\alpha=k/N$ and $h_\alpha(x) = (\alpha-x)\log(1-\frac{\alpha}{x})-\alpha$ as in \eqref{e.h_alpha}.
We will ultimately pick $\varepsilon<\frac{1}{2}$. Since $(1-x)^{-1} \leq \exp(2x)$ for $x\in[0,\frac{1}{2}]$, $\mu_q\geq 1$, and $|h_\alpha(x) - h_\alpha(y)|\leq \alpha^2y^{-2}(x-y)$ for $x\geq y\geq 2\alpha$ from Lemma~\ref{l.h_alpha properties}, we obtain
\begin{align*}
\P\left(X\geq \mu_qN(1-q^{1/6}\varepsilon)\right) \leq C(q^{1/6}k)^{-3/2}\exp\left(2q^{1/6}\varepsilon k+ Cq^{1/6}\varepsilon\frac{k^2}{N}+ Cq^{1/2}\frac{k^3}{N^2}\right).
\end{align*}
Setting $k=q^{-1/6}\varepsilon^{-1}$ (note that $k\leq N/2$ if $\varepsilon \geq 2^{2/3}N^{-2/3}$ since $q\geq N^{-2}$), and using that $k\leq N$, we obtain
\begin{align*}
\P\left(X\geq \mu_qN(1-q^{1/6}\varepsilon)\right)%
&\leq C\varepsilon^{3/2}\exp\left(C  + C\varepsilon^{-3}N^{-2}\right)
\leq C\varepsilon^{3/2},
\end{align*}
the last inequality using that $\varepsilon\geq N^{-2/3}$.
\end{proof}

\begin{proof}[Proof of Proposition~\ref{p.crude upper tail bound}]
We address the case of $\varepsilon\in[0,q^{1/3}]$ first. As in the previous proof, by Markov's inequality and Theorem~\ref{t.factorial moment asymptotics}, for any $N\geq N_0$, $k_0\leq k\leq \tfrac{1}{2}N$, and $q\in (0, 1)$,
\begin{align}
\P\left(X\geq \mu_qN(1+q^{1/6}\varepsilon)\right)
&\leq \frac{\E[(X)_k]}{(\mu_qN(1+q^{1/6}\varepsilon))_k}\nonumber\\
&\leq \frac{C(\mu_qN)^k(q^{1/6}k)^{-3/2}\exp\left(Nh_\alpha(\mu_q)+Cq^{1/2}\frac{k^3}{N^2}\right)}{(\mu_qN)^k(1+q^{1/6}\varepsilon)^k\exp\left(Nh_\alpha(\mu_q(1+q^{1/6}\varepsilon))\right)}.\label{e.markov bound earlier} 
\end{align}
Since $(1+x)^{-1} \leq \exp(-x/2)$ for $x\in[0, 1]$, and since $h_\alpha$ is increasing (as recorded in Lemma~\ref{l.h_alpha properties}), we obtain
\begin{align*}
\P\left(X\geq \mu_qN(1+q^{1/6}\varepsilon)\right)
&\leq C(q^{1/6}k)^{-3/2}\exp\left(-cq^{1/6}k\varepsilon + Cq^{1/2}\frac{k^3}{N^2}\right),
\end{align*}
the last inequality for $k\leq \frac{1}{2}cC^{-1}N$.
Setting $k=\delta \varepsilon^{1/2}q^{-1/6}N$ for a constant $\delta>0$ to be set shortly, and using that $\varepsilon\leq q^{1/3}$ (so that $k\leq \frac{1}{2}cC^{-1}N$ if $\delta<\frac{1}{2}cC^{-1}$), we obtain
\begin{align*}
\P\left(X\geq \mu_qN(1+q^{1/6}\varepsilon)\right) \leq C_{\delta}\varepsilon^{-3/4}N^{-3/2}\exp\left(-c\delta\varepsilon^{3/2}N + C\delta^3\varepsilon^{3/2}N\right).
\end{align*}
Picking $\delta$ to be an appropriately small absolute constant completes the proof in the case $\varepsilon\leq q^{1/3}$. 

Next we turn to the the case of $\varepsilon\geq q^{1/3}$. We look again at \eqref{e.markov bound earlier}. %
Using that $h_\alpha(\mu_q(1+q^{1/6}\varepsilon)) > h_\alpha(\mu_q)$ as $h_\alpha$ is increasing (from Lemma~\ref{l.h_alpha properties}) and taking $k=\alpha N$ in \eqref{e.markov bound earlier} for a small absolute constant $\alpha$ to be chosen, we conclude that
\begin{align*}
\P\left(X\geq \mu_q N(1+q^{1/6}\varepsilon)\right)
&\leq C(q^{1/6}N)^{-3/2}(1+q^{1/6}\varepsilon)^{-\alpha N}\exp\left(Cq^{1/2}\alpha^3N\right)\\
&\leq C(q^{1/6}N)^{-3/2}(1+q^{1/6}\varepsilon)^{-\alpha N/2}
\end{align*}
for all small enough $\alpha$ (using that $q^{1/6}\varepsilon \geq q^{1/2}$). In the case that $q^{1/6}\varepsilon\leq 1$, it holds that $(1+q^{1/6}\varepsilon)^{-1}\leq \exp(-cq^{1/6}\varepsilon)$, which provides the claimed bound in the remaining case. This completes the proof of Proposition~\ref{p.crude upper tail bound}.
\end{proof}

With these upper tail estimates of $X\sim\nu_{q,N}$ available, we can also quickly give the proof of Theorem~\ref{t.uniform upper tail} on the uniform upper tail of geometric LPP:

\begin{proof}[Proof of Theorem~\ref{t.uniform upper tail}]

As in the proof of Theorem~\ref{t.uniform lower tail}, we have that $T_N\stackrel{d}{=} \lambda_1-N+1$, where $(\lambda_1, \ldots, \lambda_N)$ is the Meixner ensemble. So for any $t\in \R$,
\begin{align}\label{e.upper tail to nu}
\P(T_N\geq t-N+1) = \P(\lambda_1\geq t) \leq \P\left(N^{-1}\sum_{i=1}^N \delta_{\lambda_i/N}([t, \infty)) \geq \frac{1}{N}\right) \leq N\nu_{q,N}([t,\infty)),
\end{align}
where that last inequality is Markov's inequality and recall $\nu_{q,N} = \E[N^{-1}\sum_{i=1}^N\delta_{\lambda_i/N}]$ is the expected empirical distribution of the Meixner ensemble defined in \eqref{e.nu_q,N definition}.

We set $t=\mu_qN(1+q^{1/6}\varepsilon)$ for an $\varepsilon\in(0,1)$ to be set later. Now if we let $X$ be distributed as $\nu_{q,N}$, we see that we want to estimate
\begin{align*}
N\nu_{q,N}([\mu_qN(1+q^{1/6}\varepsilon),\infty)) = N\cdot\P\Bigl(X\geq \mu_qN(1+q^{1/6}\varepsilon)\Bigr).
\end{align*}
By the first case of Proposition~\ref{p.crude upper tail bound}, the previous display is upper bounded by
\begin{align*}
C\varepsilon^{-3/4}N^{-1/2}\exp(-c\varepsilon^{3/2}N)
\end{align*}
when $\varepsilon < q^{1/3}$. Taking $\varepsilon = xN^{-2/3}$ and noting that $x < q^{1/3}N^{2/3}$ implies $\varepsilon < q^{1/3}$ completes the proof in this range of $x$. For $x>q^{1/3}N^{2/3}$, doing the above using the bound from Proposition~\ref{p.crude upper tail bound} for $\varepsilon>q^{1/3}$ completes the proof.
\end{proof}

\subsection{The upper bound on $\E[X^k]$}\label{s.general lemma application upper}
In this section we apply Lemmas~\ref{l.polynomial moment upper bound} to obtain the upper bound on $\E[X^k]$, and we will turn to the lower bounds in Section~\ref{s.general to meixner, lower}. The precise upper bound is the following. (Recall from \eqref{e.mu_q definition} that $\mu_q = (1+q^{1/2})^2/(1-q)$.)

\begin{proposition}\label{p.stronger poly moment upper bound meixner}
Let $X$ be distributed according to $\nu_{q,N}$ as defined in \eqref{e.nu_q,N definition} and $\mu_q$ be as in \eqref{e.mu_q definition}. There exist positive constants $C$, $k_0$, $\alpha_0$, and $N_0$ such that, for any $N\geq N_0$ and $(k,q)$ satisfying $k_0 \leq k\leq \min(\alpha_0N, q^{-1/6}N^{2/3})$ and $q\in [N^{-2},1)$, it holds that
\begin{align*}
\E[X^k] \leq C(q^{1/6}k)^{-3/2}(\mu_q N)^k\exp\left(Cq^{1/2}\frac{k^3}{N^2}\right).
\end{align*}

\end{proposition}

We note that, for the range of $k$ under consideration, we can absorb the factor $\exp(Cq^{1/2}k^3/N^2)$ into the constant factor $C$ (and thus match the statement of Theorem~\ref{t.meixner poly moment bounds}). We keep this factor in this statement merely to match the form of Proposition~\ref{p.polynomial lower bound meixner} ahead where it cannot be similarly absorbed, and because it will appear naturally in the proof.

\begin{proof}[Proof of Proposition~\ref{p.stronger poly moment upper bound meixner}]

We start with the breakup
\begin{align*}
\E[X^k]
&= \E[X^k\one_{X\leq \mu_qN(1-q^{1/6})}] + \E[X^k\one_{X> \mu_qN(1-q^{1/6})}]\\
&\leq (\mu_q N)^k\exp(-q^{1/6}k) + \E[X^k\one_{X> \mu_qN(1-q^{1/6})}].
\end{align*}
Next we estimate $\E[X^k\one_{X> \mu_qN(1-q^{1/6})}]$. 
Using the fundamental theorem of calculus and Fubini's theorem to write $\E[X^k\one_{X>s}] = \int_0^\infty kt^{k-1}\P(X>\max(s,t))\,\diff t$,
\begin{align*}
\E[X^k\one_{X> \mu_qN(1-q^{1/6})}]
&= \int_0^\infty kt^{k-1}\P\left(X> \max(\mu_qN(1-q^{1/6}), t)\right)\,\diff t\\
&= (\mu_q N(1-q^{1/6}))^k\P\left(X\geq \mu_q N(1-q^{1/6})\right) + \int_{\mu_q N(1-q^{1/6})}^\infty kt^{k-1}\P\left(X> t\right)\,\diff t\\
&\leq (\mu_qN)^{k}\exp(-q^{1/6}k)\\
&\qquad + (q^{1/6}k) (\mu_qN)^k \int_{-1}^{\infty} (1+q^{1/6}s)^{k-1} \P\left(X\geq \mu_qN(1+q^{1/6}s)\right)\,\diff s,
\end{align*}
performing a change of variable $t\mapsto \mu_qN(1+q^{1/6}s)$ in the last line and using $1-x\leq \exp(-x)$.

We focus on the second term in the previous display. The tail probability in the integrand behaves differently for $s\in(-1,0)$ and $s>0$, as captured in Propositions~\ref{p.upper tail bound inner} and \ref{p.crude upper tail bound}, and so we break up the integral into two parts on this basis. Doing so, and doing a change of variable $s\mapsto -s$ in the resulting first integral, we obtain that the second term in the previous display equals
\begin{equation}\label{e.kth moment upper bound break up}
\begin{split}
\MoveEqLeft[6]
(q^{1/6}k) (\mu_qN)^k \int_{0}^{1} (1-q^{1/6}s)^{k-1} \P\left(X\geq \mu_qN(1-q^{1/6}s)\right)\,\diff s\\
&+ (q^{1/6}k) (\mu_qN)^k \int_{0}^{\infty} (1+q^{1/6}s)^{k-1} \P\left(X\geq \mu_qN(1+q^{1/6}s)\right)\,\diff s.
\end{split}
\end{equation}
We start with the first term. Using that $1-x\leq \exp(-x)$ and Proposition~\ref{p.upper tail bound inner} (note that this proposition requires $\varepsilon > N^{-2/3}$, which necessitates the further break up below),
\begin{align*}
\MoveEqLeft[10]
(q^{1/6}k) (\mu_qN)^k \int_{0}^{1} (1-q^{1/6}s)^{k-1} \P\left(X\geq \mu_qN(1-q^{1/6}s)\right)\,\diff s\\
&= (q^{1/6}k) (\mu_qN)^k \int_{0}^{N^{-2/3}} (1-q^{1/6}s)^{k-1} \P\left(X\geq \mu_qN(1-q^{1/6}s)\right)\,\diff s\\
&\qquad + (q^{1/6}k) (\mu_qN)^k \int_{N^{-2/3}}^{1} (1-q^{1/6}s)^{k-1} \P\left(X\geq \mu_qN(1-q^{1/6}s)\right)\,\diff s\\
&\leq (q^{1/6}k) (\mu_qN)^k N^{-2/3} \P\left(X\geq \mu_q N(1-q^{1/6}N^{-2/3})\right)\\
&\qquad+ (q^{1/6}k) (\mu_qN)^k \int_{N^{-2/3}}^{1} s^{3/2}\exp(-q^{1/6}sk) \,\diff s\\
&\leq C(q^{1/6}k) (\mu_qN)^k \left[N^{-5/3} + (q^{1/6}k)^{-5/2}\right].
\end{align*}
Now we turn to the second term of \eqref{e.kth moment upper bound break up}. By applying the bound on the tail probability from Proposition~\ref{p.crude upper tail bound} (in the two cases of $s< q^{1/3}$ and $s>q^{1/3}$) and using that $1+x\leq \exp(x)$, we obtain that the second term of \eqref{e.kth moment upper bound break up} is upper bounded by
\begin{equation}\label{e.upper tail split up}
\begin{split}
\MoveEqLeft[16]
C(q^{1/6}k) (\mu_qN)^k \biggl[N^{-3/2}\int_{0}^{q^{1/3}} s^{-3/4}\exp\left(q^{1/6}ks-cs^{3/2}N\right) \,\diff s\\
& + q^{-1/4}N^{-3/2}\int_{q^{1/3}}^\infty (1+q^{1/6}s)^{k-1-cN} \,\diff s\biggr].
\end{split}
\end{equation}
We write $k$ in the exponent as $\alpha N$ and perform Laplace's method to bound the first integral, with $f(s) = q^{1/6}\alpha s - cs^{3/2}$, $s_0=\mrm{argmax}_s f(s) = \frac{4}{9}c^{-2} q^{1/3}\alpha^2$ and $f''(s_0) = -\frac{3}{4}cs_0^{-1/2}$, to obtain that the first term (i.e., the first integral times its complete coefficient) of the previous display is upper bounded by (also using that $\alpha = k/N$)
\begin{align}
\MoveEqLeft[6]
C(\mu_q N)^k q^{1/6}k \cdot N^{-3/2} (N|f''(s_0)|)^{-1/2}\cdot s_0^{-3/4}\cdot \exp(Nf(s_0))\nonumber\\
&= C(\mu_q N)^k q^{1/6}k \cdot N^{-3/2} (Nq^{-1/6}\alpha^{-1})^{-1/2}\cdot q^{-1/4}\alpha^{-3/2}\cdot \exp\left(Nq^{1/2}\frac{k^3}{N^2}\right)
\nonumber\\
&= 	C N^{-1}(\mu_q N)^k \exp\left(Cq^{1/2}\frac{k^3}{N^2}\right).
\end{align}
Under the condition that $k\leq cN/2$, the second term in \eqref{e.upper tail split up} is upper bounded (using that $k\leq N$ and $(1+q^{1/2})^{-1}\leq 1$ for the displayed inequality) by
\begin{align*}
C(q^{1/6}k)(\mu_qN)^k \cdot q^{-1/4}N^{-3/2}\cdot \frac{q^{-1/6}(1+q^{1/2})^{-cN/2}}{cN} \leq q^{-1/4}k^{-3/2}(\mu_qN)^k.
\end{align*}
Putting it all together, we have shown that
\begin{align*}
\E[X^k] \leq C(\mu_q N)^k\left[2\exp(-q^{1/6}k) + (q^{1/6}k)N^{-5/3} + 2(q^{1/6}k)^{-3/2} + N^{-1}\exp\left(Cq^{1/2}\frac{k^3}{N^2}\right)\right].
\end{align*}
Clearly $\exp(-q^{1/6}k) \leq C(q^{1/6}k)^{-3/2}$. The condition that $k\leq q^{-1/6}N^{2/3}$ implies that $(q^{1/6}k)N^{-5/3}\leq (q^{1/6}k)^{-3/2}$ and $N^{-1}\leq (q^{1/6}k)^{-3/2}$. Thus we obtain that the RHS in the previous display is upper bounded by 
$$C(q^{1/6}k)^{-3/2}(\mu_qN)^k\exp\left(Cq^{1/2}\frac{k^3}{N^2}\right),$$
completing the proof.
\end{proof}

\subsection{The lower bound on $\E[X^k]$} \label{s.general to meixner, lower}

Here is the lower bound statement we prove:

\begin{proposition}\label{p.polynomial lower bound meixner}
Let $X$ be distributed according to $\nu_{q,N}$ as defined in \eqref{e.nu_q,N definition} and $\mu_q$ be as in \eqref{e.mu_q definition}. Then there exist positive constants $c$, $C$, $\alpha_0$, $N_0$, and $k_0$ such that, for any $N\geq N_0$, $k_0 \leq k\leq \alpha_0N$, and $q\in[k^{-2},1)$,
\begin{align*}
\E[X^k] \geq c(q^{1/6}k)^{-3/2}(\mu_q N)^k\exp\left(-Cq^{1/2}\frac{k^3}{N^2}\right) .
\end{align*}

\end{proposition}

The basic idea is to combine Lemma~\ref{l.general polynomial moment lower bound} with Theorem~\ref{t.factorial moment asymptotics}. Recall that the lower bound from Lemma~\ref{l.general polynomial moment lower bound} has the term $\E[(X)_k] - \E[(X)_k\one_{X\geq \beta N}]$. While the first term can be lower bounded using Theorem~\ref{t.factorial moment asymptotics}, we do not currently have an estimate for the second term. To handle this we will again make use of the fundamental theorem of calculus and Fubini's theorem and make use of Proposition~\ref{p.crude upper tail bound} (upper bound on the upper tail of $X$), as in the proof of Proposition~\ref{p.polynomial lower bound meixner}.

Here too the fact that we need estimates on polynomial moments $\E[X^k]$ with $k$ up to order $N$ creates technical difficulties. For smaller values of $k$, e.g. up to order $N^{2/3}$, one can upper bound $\E[(X)_k\one_{X\geq \beta N}]$ by $\E[X^k\one_{X\geq \beta N}]$. The latter is less than $\E[X^{2k}]^{1/2}\P(X\geq \beta N)^{1/2}$ by Cauchy-Schwarz, and this can be bounded using the already proved upper bound on the polynomial moments and a crude upper bound on the tail probability. This strategy works for smaller $k$ because, when $X=\Theta(N)$ as is typical, $X^k$ and $(X)_k$ differ only by a constant factor; the same certainly does not hold when $k=\Theta(N)$. It is for this reason that we must follow the more delicate path outlined above, making use of a variant of the layer cake formula for $\E[(X)_k\one_{X\geq \beta N}]$ and essentially sharp upper tail bounds for $X$ throughout the tail.

\begin{proof}[Proof of Proposition~\ref{p.polynomial lower bound meixner}]
We will take $\beta = \mu_q(1+q^{1/2})$ and ultimately apply Lemma~\ref{l.general polynomial moment lower bound}. So we need to lower bound $\E[(X)_k] - \E[(X)_k\one_{X>\mu_qN(1+q^{1/2})}]$, in particular, to show that the second term is much smaller than the first. We start with lower bounding the first term using Theorem~\ref{t.factorial moment asymptotics}, which says that, for $N\geq N_0$, $k_0\leq k\leq \tfrac{1}{2}N$, and $q\in[k^{-2},1)$,
\begin{align}
\E[(X)_k]
&\geq C'(q^{1/6}k)^{-3/2}(\mu_qN)^k\exp\left(N\left[h_\alpha(\mu_q) -C'\alpha^3q^{1/2}\right]\right), \label{e.poly moment lower bound step}
\end{align}
where recall from \eqref{e.h_alpha} that $h_\alpha(x) = (\alpha-x)\log(1-\frac{\alpha}{x})-\alpha$ and $\alpha = k/N$.

We need to show that $\E[(X)_k\one_{X\geq \mu_q N(1+q^{1/2})}]$ is smaller than the previous display by at least a constant factor. The idea will be, as in the arguments in Section~\ref{s.general lemma application upper}, to use the fundamental theorem of calculus and Fubini's theorem to write this expectation as an integral against the tail probability. 
\begin{align*}
\E[(X)_k\one_{X \geq \mu_qN(1+q^{1/2})}]
&= \int_0^{\infty} \frac{\diff\,}{\diff x}(x)_k \cdot \P\left(X >  \max\bigl(x, \mu_q N(1+q^{1/2})\bigr)\right)\,\diff x\\
&= \int_0^{\mu_qN(1+q^{1/2})} \frac{\diff\,}{\diff x}(x)_k \cdot \P\left(X > \mu_q N(1+q^{1/2}\bigr)\right)\,\diff x\\
&\qquad + \int_{\mu_qN(1+q^{1/2})}^{\infty} \frac{\diff\,}{\diff x}(x)_k \cdot \P\left(X > x\right)\,\diff x\\
&= \bigl(\mu_qN(1+q^{1/2})\bigr)_k\cdot \P\bigl(X> \mu_q N(1+q^{1/2})\bigr)\\
&\qquad + \int_{\mu_qN(1+q^{1/2})}^{\infty} \frac{\diff\,}{\diff x}(x)_k \cdot \P\left(X > x\right)\,\diff x.
\end{align*}
Now, we can write the derivative of $(x)_k$ as $(\psi(x+1)-\psi(x-k+1))\cdot(x)_k$, where $\psi$ is the digamma function (as can be verified by using the product rule to differentiate $(x)_k$ and the recursive relation $\psi(x+1) = \psi(x) + x^{-1}$ that $\psi$ satisfies). The digamma function satisfies the inequalities, for $x>\frac{1}{2}$,
\begin{align*}
\psi(x) \in \left(\log(x-\tfrac{1}{2}), \log x\right),
\end{align*}
so that, when $x > k-\frac{1}{2}$,
\begin{align*}
\psi(x+1)-\psi(x-k+1) \leq \log(x+1) - \log(x-k+\tfrac{1}{2}) = -\log\left(1-\frac{k+\frac{1}{2}}{x+1}\right) \leq \frac{k+\frac{1}{2}}{x+1}.
\end{align*}
if $k < x/2$ (which holds in our situation since $k < \frac{1}{2}N$ and $x>\mu_qN$ with $\mu_q > 1$). 

We recall that, by the second case of Proposition~\ref{p.crude upper tail bound}, $\P(X>\mu_qN(1+q^{1/2})) \leq q^{-1/4}N^{-3/2}(1+q^{1/2})^{-cN}$.

Using these facts and performing the change of variable $x\mapsto \mu_q N(1+q^{1/2} y)$, we see that
\begin{align}
\MoveEqLeft[2]
\E[(X)_k\one_{X \geq \mu_qN(1+q^{1/2})}]\nonumber\\
&\leq C\left(\mu_q N(1+q^{1/2})\right)_k \cdot q^{-1/4}N^{-3/2}(1+q^{1/2})^{-cN}\nonumber\\
&\qquad + C\mu_q Nq^{1/2}\int_{1}^{\infty} \frac{k}{\mu_q N}(\mu_qN(1+q^{1/2}y))_k \cdot \P\left(X > \mu_qN(1+q^{1/2}y)\right)\,\diff y. \label{e.factorial moment upper tail bound}
\end{align}
We next estimate the falling factorial terms in the above expression. We know from \eqref{e.deterministic factorial to polynomial upper bound} that, for any $\varepsilon>0$ and $k=\alpha N$, (recalling $h_\alpha(x) = (\alpha-x)\log(1-\frac{\alpha}{x})-\alpha$ from \eqref{e.h_alpha})
\begin{align*}
\left(\mu_qN(1+\varepsilon)\right)_k
&\leq C(\mu_qN(1+\varepsilon))^k \exp\Bigl(N h_\alpha(\mu_q(1+\varepsilon))\Bigr)\\
&\leq C(\mu_qN)^k(1+\varepsilon)^k\exp\left(N\Bigl[h_\alpha(\mu_q) + C\alpha^2(\mu_q^{-1}\varepsilon\wedge 1)\Bigr]\right)\\
&\leq C(\mu_qN)^k(1+\varepsilon)^k\exp\left(N\Bigl[h_\alpha(\mu_q) + C\alpha^2(\varepsilon\wedge 1)\Bigr]\right),
\end{align*}
the second inequality using Lemma~\ref{l.h_alpha properties} and third using $\mu_q\geq 1$.

Substituting this bound on the falling factorial into \eqref{e.factorial moment upper tail bound} and using the bounds on the tail probability from the second case of Proposition~\ref{p.crude upper tail bound}, we obtain that $\E[(X)_k\one_{X \geq \mu_qN(1+C_1q^{1/2})}]$ is upper bounded by
\begin{equation}\label{e.factorial moment upper tail breakup}
\begin{split}
\MoveEqLeft[4]
\widetilde C(\mu_qN)^k \exp\left(N\left[(\alpha-\mu_q)\log\left(1-\frac{\alpha}{\mu_q}\right)- \alpha\right]\right)\cdot q^{-1/4}N^{-3/2}\\
& \times \biggl[ \exp\left( -N\left[c\log(1+q^{1/2}) - C\alpha^2q^{1/2}\right]\right)\\
&\qquad\qquad\quad + kq^{1/2} \int_{1}^{\infty} \exp\left(-N\left[c\log(1+q^{1/2}y) - C\alpha^2\left(1\wedge(q^{1/2}y)\right)\right]\right)\, \diff y\biggr].
\end{split}
\end{equation}
To ensure that the coefficient of $N$ in the exponential is negative in both the terms on the second and third lines of the previous display, we will need to restrict how big $\alpha$ can be. More precisely, since $\alpha < \alpha_0$ with $\alpha_0$ an absolute constant which we are free to set, we set it such that, for all $x\in[0,1]$
\begin{align*}
\frac{1}{2}c\log(1+x) > C\alpha_0^2x.
\end{align*}
Then the second line of \eqref{e.factorial moment upper tail breakup} is upper bounded by
\begin{align*}
(1+q^{1/2})^{-\frac{1}{2}cN} \leq \tfrac{1}{2},
\end{align*}
using that $q\geq N^{-2}$.

Similarly the third line of \eqref{e.factorial moment upper tail breakup} is upper bounded by
\begin{align*}
kq^{1/2} \int_{1}^\infty (1+q^{1/2}y)^{-\frac{1}{2}cN}\,\diff y = Ck \frac{(1+q^{1/2})^{-cN}}{N} \leq \tfrac{1}{2}.
\end{align*}
Thus overall we see that there is a $C$ such that
\begin{align*}
\E[(X)_k\one_{X \geq \mu_qN(1+C_1q^{1/2})}]
&\leq Cq^{-1/4}N^{-3/2}(\mu_qN)^k \exp\left(N\left[(\alpha-\mu_q)\log\left(1-\frac{\alpha}{\mu_q}\right)- \alpha\right]\right).
\end{align*}
Then, recalling the lower bound on $\E[(X)_k]$ from \eqref{e.poly moment lower bound step},
\begin{align*}
\frac{\E[(X)_k\one_{X\geq \mu_q N(1+C_1q^{1/2})}]}{\E[(X)_k]}
&\leq C(C')^{-1}\left[\frac{(q^{1/6}k)^{3/2}}{(q^{1/6}N)^{3/2}}\right] \leq C(C')^{-1}\alpha_0^{3/2}
\leq \frac{1}{2},
\end{align*}
the last inequality by reducing $\alpha_0$ further if necessary and using that $k\leq \alpha_0N$ and $q\geq k^{-2}$.

Now applying Lemma~\ref{l.general polynomial moment lower bound} with $\beta=\mu_q(1+q^{1/2})\geq 1$ and $\alpha\leq \frac{1}{2}$ (and noting that thus $(\beta-\alpha)^{-1}\leq 2$), using the previous display and \eqref{e.poly moment lower bound step}, we get
\begin{equation*}
\begin{split}
\E[X^k]
&\geq \frac{1}{2}\E[(X)_k]\cdot\exp\left(N\left[(\beta-\alpha)\log\left(1-\frac{\alpha}{\beta}\right) + \alpha\right]-\tfrac{1}{2}(\beta-\alpha)^{-1}\right)\\
&\geq c(q^{1/6}k^{-3/2})(\mu_qN)^k\exp\left(-C\alpha^3q^{1/2}N\right).\qedhere
\end{split}
\end{equation*}
\end{proof}

\section{Concentration inequalities and the proof of Theorem~\ref*{mt.q-pushtasep bound}}

In this section we combine Theorem~\ref{t.uniform lower tail} (on the uniform tail for geometric LPP) with the representation of the position $x_N(N)$ of the first particle in $q$-pushTASEP in terms of the LPP value in an infinite periodic strip of inhomogeneous geometric random variables, and so obtain an upper bound on the lower tail of $x_N(N)$. Recall from the proof outline given in Section~\ref{s.proof ideas} that the main idea is to lower bound the LPP value by a sum of independent LPP values, each one in an $N\times N$ square; the parameter of the geometric random variables is the same within each single such square, but varies across different ones.

For this argument we need one final ingredient: a concentration inequality for a sum of independent random variables that takes into account the possibly varying scales of the summands. Indeed, we will be considering a sum of geometric LPP values where the parameter of the geometric is $q^i$ for varying $i$; the scale of fluctuation for fixed $i$ is $q^{i/6}/(1-q^{i}) \approx i^{-1}(|\log q|)^{-1} q^{i/6}$. Such a concentration inequality is recorded next, and, as its proof is fairly routine, will be proven in Appendix~\ref{s.concentration}.

\begin{theorem}\label{t.concentration}
Let $I\in\N\cup\{\infty\}$ and suppose $X_1, \ldots, X_I$ are independent, and assume that there exists $C_1<\infty$ and $\rho_1, \ldots, \rho_I >0$ such that each $X_i$ satisfies
\begin{align*}
\P\left(X_i\geq t\right) \leq C_1\exp(-\rho_i t^{3/2})
\end{align*}
for all $t>0$. Let $\sigma_2 = \sum_{i=1}^I \rho_i^{-2}$ and $\sigma_{2/3} = \sum_{i=1}^I \rho_i^{-2/3}$. Then there exist positive absolute constants $C$ and $c$ such that, for $t>0$,
\begin{align*}
\P\left(\sum_{i=1}^I X_i \geq t + C\cdot C_1\sigma_{2/3}\right) \leq \exp\left(-c\sigma_2^{-1/2}t^{3/2}\right).
\end{align*}

\end{theorem}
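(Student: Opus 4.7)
The plan is to execute a Cramér-Chernoff argument via a moment generating function (MGF) estimate for each summand on its natural scale. Without loss of generality we may assume each $X_i$ is non-negative (since $\E[\exp(\lambda X_i)] \le \E[\exp(\lambda X_i^+)]$ for $\lambda \ge 0$ and the upper tail hypothesis is unaffected by the replacement $X_i \mapsto X_i^+$) and that $C_1 \ge 1$. The key technical step is to show that, for absolute constants $a, b > 0$ and all $\lambda \ge 0$,
\[
\E[\exp(\lambda X_i)] \le \exp\bigl(a\, C_1\, \rho_i^{-2/3}\, \lambda + b\, \rho_i^{-2}\, \lambda^3\bigr).
\]

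Starting from the identity $\E[\exp(\lambda X_i)] = 1 + \lambda \int_0^\infty e^{\lambda s} \P(X_i > s)\, ds$ and the tail hypothesis, the substitution $u = \rho_i^{2/3} s$ with $\beta := \lambda \rho_i^{-2/3}$ reduces the problem to bounding $1 + C_1 \beta \int_0^\infty \exp(\beta u - u^{3/2})\, du$. Splitting this integral at $u = 9\beta^2$ (so that $u^{3/2} \ge 3\beta u$ beyond the cutoff) gives a bound of the form $C'(1 + \beta^{1/2})\exp(c_0 \beta^3)$ for absolute $C', c_0 > 0$. Taking logarithms and treating $\beta \le 1$ (where $\log(1+x) \le x$ produces an $O(C_1 \beta)$ contribution) and $\beta \ge 1$ (where $\log \beta \le \beta \le \beta^3$ and $\log C_1 \le C_1$ absorb the polynomial and $C_1$ prefactors, producing an additional linear $C_1 \beta$ piece together with the cubic term) separately yields the claimed MGF estimate. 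The main technical obstacle is precisely this regime-by-regime handling: the crude integral bound has a polynomial-in-$\beta$ prefactor that mixes poorly with the exponential, and care is needed to ensure that the dependence on $C_1$ enters only through the coefficient of $\lambda$ and that the coefficient of $\lambda^3$ is absolute.

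With the MGF bound established, independence of the $X_i$ yields
\[
\E\!\left[\exp\!\left(\lambda \sum_{i=1}^I X_i\right)\right] \le \exp\bigl(a\, C_1\, \lambda\, \sigma_{2/3} + b\, \lambda^3\, \sigma_2\bigr),
\]
and Markov's inequality applied at the threshold $t + a C_1 \sigma_{2/3}$ cancels the linear-in-$\lambda$ term, leaving $\exp(-\lambda t + b \lambda^3 \sigma_2)$ for any $\lambda > 0$. Optimizing at $\lambda^* = \sqrt{t/(3b\sigma_2)}$ produces the tail exponent $-c\, \sigma_2^{-1/2}\, t^{3/2}$ with $c = 2/(3\sqrt{3b})$, giving the conclusion with $C = a$ and $c$ both absolute.
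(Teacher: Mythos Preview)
Your proposal is correct and follows essentially the same route as the paper: both derive a moment generating function bound of the shape $\E[e^{\lambda X_i}]\le\exp(a\lambda\rho_i^{-2/3}+b\lambda^3\rho_i^{-2})$ from the tail hypothesis via the identity $\E[e^{\lambda X}]=1+\lambda\int_0^\infty e^{\lambda s}\P(X>s)\,ds$, multiply over $i$ by independence, and then run Chernoff with $\lambda\asymp t^{1/2}\sigma_2^{-1/2}$. Your bookkeeping is in fact slightly sharper than the paper's, since you arrange for the cubic coefficient $b$ to be independent of $C_1$, which makes the final constant $c$ genuinely absolute; in the paper's Proposition~6.2 the cubic term carries a factor of $C_1$, so the optimized $c$ there implicitly depends on $C_1$.
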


Using Theorem~\ref{t.concentration} we may give the proof of the main result, Theorem~\ref{mt.q-pushtasep bound}. We will need a simple lower bound on the coefficient rescaling the fluctuations in the definition \eqref{e.definition of X^sc} of $X^{\mrm{sc}}_N$.

\begin{lemma}\label{l.flucutation constant bound}
For $q, u\in(0,1)$,
\begin{align*}
 (-\psi_q''(\log_q u))^{1/3}(\log q^{-1})^{-1} \geq \frac{u^{1/3}}{(1-q)^{1/3}(1-u)^{2/3}}.
 \end{align*}
  
\end{lemma}

\begin{proof}
By \cite[eq. (1.6)]{mansour2009some},
$\psi_q''(x) = (\log q)^3\cdot\sum_{n=1}^\infty \frac{n^2q^{nx}}{1-q^n}.$
Taking $x=\log_q u = \log u/\log q$ yields that
\begin{align*}
(1-q)(-\psi_q''(\log_q u))(\log q)^{-3} = \sum_{n=1}^\infty \frac{n^2u^n(1-q)}{1-q^n} = \sum_{n=1}^\infty \frac{n^2u^n}{1+q+ \ldots +q^{n-1}}.
\end{align*}
Since the denominator is upper bounded by $n$ and $\sum_{n=1}^\infty nu^n = u/(1-u)^2$, we obtain the lemma.
\end{proof}

This bound is not sharp. Indeed, since $(-\psi_q''(\log_q u))(\log q)^{-3} = \sum_{n=1}^\infty n^2u^n/(1-q)^n$, in the $q\to 0$ limit it equals $\sum_{n=1}^\infty n^2 u^n = u/(1-u)^3$ which is of larger order than $u/(1-u)^2$ in the $u\to 1$ regime.

\begin{proof}[Proof of Theorem~\ref{mt.q-pushtasep bound}]

We have to upper bound, for $\theta>\theta_0=\theta_0(q)$,
\begin{align}\label{e.main prob to upper bound earlier}
\P\left(x_N(N) \leq f_qN - (-\psi_q''(\log_q u))^{1/3}(\log q^{-1})^{-1}\theta N^{1/3} \right),
\end{align}
where we recall that $f_q$ is defined in \eqref{e.fq definition} as
\begin{align*}
f_q = 2\cdot\frac{\psi_q(\log_q u) + \log(1-q)}{\log q} + 1.
\end{align*}
By Lemma~\ref{l.flucutation constant bound}, with $\sigma_{u,q} = u^{1/3}(1-q)^{-1/3}(1-u)^{-2/3}$, \eqref{e.main prob to upper bound earlier} is upper bounded by
\begin{align}\label{e.main prob to upper bound}
\P\left(x_N(N) \leq f_qN - \sigma_{u,q}\theta N^{1/3} \right).
\end{align}
We define $\tilde f_q = f_q - 1$. By Theorem~\ref{t.q-pushTASEP to lpp}, we know that $\smash{L + N \stackrel{d}{=} x_N(N)}$, where $L$ is the LPP value from the topmost site to $\infty$ in the infinite periodic environment defined in Section~\ref{s.lpp}. Let $\smash{L_N^{(i)}}$ be the last passage time from the top to the bottom of the $i$\textsuperscript{th} large square on the vertical line from the top (which has \iid Geo($u^2q^{2i}$) random variables associated to each small square). Then clearly $\sum_{i=0}^\infty \smash{L_N^{(i)}} \leq L$, so
\begin{align*}
\eqref{e.main prob to upper bound} = \P\left(L \leq \tilde f_qN - \sigma_{u,q} \theta N^{1/3} \right)
\leq \P\left(\sum_{i=0}^\infty L_N^{(i)} \leq \tilde f_qN - \sigma_{u,q} \theta N^{1/3}\right).
\end{align*}
We next add and subtract the law of large numbers term of $L_{N}^{(i)}$, which as we see from Theorem~\ref{t.uniform lower tail} is $2N uq^i(1-uq^i)^{-1}$, as this is the term by which the random variables are centered to yield the tail bounds in the same theorem. So we can write the right-hand side of the previous display as
\begin{align}\label{e.prob with mean removed}
\P\left(\sum_{i=0}^\infty \left(L_N^{(i)}-2N\cdot\frac{uq^{i}}{1-uq^{i}}\right) \leq \tilde f_qN - \sum_{i=0}^\infty 2N\cdot\frac{uq^{i}}{1-uq^{i}}- \sigma_{u,q}\theta N^{1/3}\right).
\end{align}
We have already evaluated the LLN sum in the proof ideas section. So we recall from \eqref{e.first form of LLN} and \eqref{e.lln expression} that
\begin{align*}
\sum_{i=0}^\infty 2N\cdot\frac{uq^{i}}{1-uq^{i}} 
&= 2N\cdot \frac{\psi_q(\log_q(u)) + \log(1-q)}{\log q} = \tilde f_q N.
\end{align*}
Putting this back into \eqref{e.prob with mean removed}, we see that
\begin{align}\label{e.concentration prob to bound}
\eqref{e.prob with mean removed} = \P\left(\sum_{i=0}^\infty \left(L_N^{(i)}-2N\cdot\frac{uq^{i}}{1-uq^{i}}\right) \leq - \sigma_{u,q} \theta N^{1/3}\right).
\end{align}
In the remainder of the proof we will invoke the concentration bound from Theorem~\ref{t.concentration} to upper bound the previous display.

Now, $L_N^{(i)}$ is the LPP value in an $N\times N$ square with \iid geometric random variables of parameter $u^2q^{2i}$. We know from Theorem~\ref{t.uniform lower tail} that there exist positive constants $c$, $t_0$, and $N_0$ such that, when $u^2q^{2i} \in (0, 1)$, $t>t_0$, and $N>N_0$,
\begin{align*}
\P\left(L_N^{(i)} - 2N\cdot\frac{uq^{i}}{1-uq^{i}} \leq - t\cdot\frac{u^{1/3}q^{i/3}}{1-u^2q^{2i}}N^{1/3}\right) \leq \exp(-ct^{3/2}).
\end{align*}
Equivalently, there exist positive constants $c$, $C_1$, and $N_0$ such that, when $u^2q^{2i} \in (0, 1)$, $t>0$ (i.e., not  $t_0$), and $N>N_0$,
\begin{align*}
\P\left(L_N^{(i)} - 2N\cdot\frac{uq^{i}}{1-uq^{i}} \leq - t\cdot\frac{u^{1/3}q^{i/3}}{1-u^2q^{2i}}N^{1/3}\right) \leq C_1\exp(-ct^{3/2}).
\end{align*}
Letting $\sigma_{i,q,u}' = u^{1/3}q^{i/3}(1-u^2q^{2i})^{-1}$, we see that, for $t>0$,
\begin{align*}
\P\left(L_N^{(i)} - 2N\cdot\frac{uq^{i}}{1-uq^{i}} \leq -t N^{1/3}\right) \leq C_1\exp\left(-c(\sigma_{i,q,u}')^{-3/2} t^{3/2}\right).
\end{align*}
We next want invoke Theorem~\ref{t.concentration} with $I=\infty$ and (from the previous display) $\rho_i = (\sigma_{i,q,u}')^{-3/2}$. Now, since $q\in(0,1)$,
\begin{align*}
\sigma_2 &= \sum_{i=0}^\infty \rho_i^{-2} = \sum_{i=0}^\infty (\sigma_{i,q,u}')^3 = \sum_{i=0}^\infty \frac{uq^i}{(1-u^2q^{2i})^3} \leq \frac{u}{(1-q)(1-u^2)^3}\\
\text{and}\quad
\sigma_{2/3} &= \sum_{i=0}^\infty \rho_i^{-2/3} = \sum_{i=0}^\infty \sigma_{i,q,u}' = \sum_{i=0}^\infty \frac{u^{1/3}q^{i/3}}{1-u^2q^{2i}} \leq \frac{u^{1/3}}{(1-q^{1/3})(1-u^2)}.
\end{align*}

With these estimates, we obtain from Theorem~\ref{t.concentration} that there exist positive $C$ and $c$ such that, for all $t>0$ (and using that $1-u^2 = \Theta(1-u)$),
\begin{align*}
\MoveEqLeft[16]
\P\left(\sum_{i=0}^\infty \left(L_N^{(i)}-2N \cdot\frac{uq^{i}}{1-uq^{i}}\right) \leq -t N^{1/3} - C\frac{u^{1/3}}{(1-q^{1/3})(1-u)}\right)\\
&\leq \exp\left(-cu^{-1/2}(1-q)^{1/2}(1-u)^{3/2}t^{3/2}\right).
\end{align*}
So, putting in $t=2\sigma_{u,q}\theta = u^{1/3}(1-q)^{-1/3}(1-u)^{-2/3}\theta$, we obtain that, for $\theta>C(1-q)^{1/3}/((1-q^{1/3})(1-u)^{1/3})$ with $C$ as in the last display, \eqref{e.concentration prob to bound} is bounded as
\begin{equation*}
\P\left(\sum_{i=0}^\infty \left(L_N^{(i)}-2N\cdot\frac{uq^{i}}{1-uq^{i}}\right) \leq -\sigma_{u,q}\theta N^{1/3}\right) \leq \exp\left(-c(1-u)^{1/2}\theta^{3/2}\right).
\end{equation*}
Note that $(1-q)^{1/3}/(1-q^{1/3}) \leq C'\vee (\log q^{-1})^{-2/3}$ by writing $q=\exp(-\varepsilon)$ for some $C'$. Thus we obtain the desired bound \eqref{e.main prob to upper bound} with $\theta_0(q,u) = C(1-u)^{-1/3}(1\vee (\log q^{-1})^{-2/3})$.
\end{proof}

\appendix

\section{Proof of the LPP--$q$-Whittaker connection}\label{app.q-whittaker and lpp}

In this appendix we give the proof of Theorem~\ref{t.q-pushTASEP to lpp} relating the observable $x_N(T)$ to an infinite last passage problem in a periodic and inhomogeneous environment. As mentioned, the proof goes through an equivalence to the $q$-Whittaker measure, and we start by introducing it.

\subsection{$q$-Whittaker polynomials and measure}
\begin{definition}[$q$-Whittaker polynomial]
For a skew partition $\mu/\lambda$, the skew $q$-Whittaker
polynomial in $n$ variables $\mathscr P_{\mu/\lambda} (x_1,  \ldots  , x_n; q)$ is defined recursively by the branching rule
\begin{align*}
\mathscr P_{\mu/\lambda} (x_1,  \ldots  , x_n; q) = \sum_{\eta}\mathscr P_{\eta/\lambda} (x_1,  \ldots  , x_{n-1}; q)\mathscr P_{\mu/\eta} (x_n; q),
\end{align*}
where, for a single variable $z\in\C$ (recalling the $q$-binomial coefficient defined in \eqref{e.q-binomial coefficient}),
\begin{align*}
\mathscr P_{\mu/\eta}(z; q) = \one_{\eta\prec \mu} \prod_{i\geq 1} z^{\mu_i-\eta_i}\binom{\mu_i-\mu_{i+1}}{\mu_i-\eta_i}_{\!\!q}.
\end{align*}
For a partition $\mu$, the $q$-Whittaker polynomial $\mathscr P_\mu$ is given by the skew $q$-Whittaker polynomial $\mathscr P_{\mu/\lambda}$ with $\lambda$ taken to be the empty partition. The $q$-Whittaker polynomial is a special case ($t=0$) of the Macdonald polynomials, for which a comprehensive reference is \cite[Section VI]{macdonald1998symmetric}.
\end{definition}

For a partition $\mu$, we also define $\mathdutchcal b_{\mu}(q)$ by
\begin{align*}
\mathdutchcal{b}_{\mu}(q) = \prod_{i\geq 1} \frac{1}{(q;q)_{\mu_i-\mu_{i+1}}}.
\end{align*}

\begin{definition}[$q$-Whittaker measure]
The $q$-Whittaker measure $\mathbb{W}_{a;b}^{(q)}$, first introduced in \cite{borodin2014macdonald}, is the measure on the set of all partitions given by
\begin{align*}
\mathbb{W}_{a;b}^{(q)}(\mu) = \frac{1}{\Pi(a;b)} \mathdutchcal{b}_\mu(q) \mathscr P_{\mu}(a;q)\mathscr P_{\mu}(b;q),
\end{align*}
where $a = (a_1, \ldots, a_n)$ and $b = (b_1, \ldots, b_t)$ satisfy $a_i, b_j\in(0,1)$, and $\Pi(a;b)$ is a normalization constant given explicitly by
\begin{align*}
\Pi(a;b) = \prod_{i=1}^n\prod_{j=1}^t \frac{1}{(a_ib_j; q)_{\infty}}.
\end{align*}

\end{definition}

We may now record the important connection between $x_N(T)$ and the $q$-Whittaker measure which holds under general parameter choices and general times, when started from the narrow-wedge initial condition:

\begin{theorem}[Section~3.1 of \cite{matveev2016q}]\label{t.q-pushTASEP and whittaker}
Let $a$, $b$ be specializations of parameters respectively $(a_1,  \ldots  , a_N) \in (0, 1)^N$ and $(b_1,  \ldots  , b_T ) \in (0, 1)^T$. Let $\mu \sim \mathbb{W}^{(q)}_{a;b}$ and let $x(T)$ be a $q$-pushTASEP under initial conditions $x_k(0) = k$ for $k = 1,  \ldots  , N$. Then,
$$x_N(T) \stackrel{d}{=} \mu_1 + N.$$
\end{theorem}

\subsection{$q$-Whittaker to LPP}
Next we give the proof of Theorem~\ref{t.q-pushTASEP to lpp}, which was explained to us by Matteo Mucciconi. As indicated in Section~\ref{s.intro.q-pushtasep to LPP}, the proof we give relies heavily on the work \cite{imamura2021skew}. Before proceeding we introduce some terms that will be needed. First, a tableaux is a filling of a Young diagram with non-negative integers. It is called \emph{semi-standard} if the entries in the rows and columns are non-decreasing, from left to right and top to bottom respectively.
A \emph{vertically strict tableaux} is a tableaux of non-negative integers in which the columns are strictly increasing from top to bottom, but there is no constraint on the row entries. A \emph{skew tableaux} is a pair of partitions $(\lambda, \mu)$ such that the Young diagram of $\lambda$ contains that of $\mu$, and should be thought of as the boxes corresponding to $\lambda\setminus \mu$. A semi-standard skew tableaux is defined analogously to the semi-standard tableaux.

As we saw, Theorem~\ref{t.q-pushTASEP and whittaker} on the relation between $x_N(T)$ and the $q$-Whittaker measure reduces the proof of Theorem~\ref{t.q-pushTASEP to lpp} to proving the equality in distribution of the LPP value $L$ and the length of the top row of a Young diagram $\lambda$ sampled from the $q$-Whittaker measure. In fact, we will prove a stronger statement which relates all the row lengths of $\lambda$ to appropriate last passage percolation observables. For this, we let $L^{(j)}$ be the maximum weight over all collections of $j$ disjoint paths, one path starting from $(i,1)$ for each $1\leq i\leq j$ and all going to $\infty$ downwards, where the weight of a collection of paths is the sum of the weights of the individual paths.

\begin{theorem}\label{t.full q-whittaker to lpp}
For $1\leq j\leq \min(N,T)$, let $L^{(j)}$ be as defined above in the environment defined in Section~\ref{s.lpp} and let $\mu\sim \mathbb{W}^{(q)}_{a;b}$ with $a_i, b_j \in (0,1)$ for all $(i,j)\in\{1, \ldots, N\}\times\{1, \ldots, T\}$. Then, jointly across $1\leq j\leq \min(N,T)$,
$$L^{(j)} \stackrel{d}{=} \mu_1+ \ldots +\mu_j.$$
\end{theorem}

Theorem~\ref{t.q-pushTASEP to lpp} follows immediately from combining the $j=1$ case of Theorem~\ref{t.full q-whittaker to lpp} with Theorem~\ref{t.q-pushTASEP and whittaker}.

\begin{proof}[Proof of Theorem~\ref{t.full q-whittaker to lpp}]
We prove this in the case $T=N$. It is easy to see that the same proof applies to the $T< N$ case by setting $m_{(i,j);k} = 0$ (in the same notation as below) for $j=T+1, \ldots, N$, and similarly for $N<T$.

We will make use of two bijections. The first, known as the Sagan-Stanley correspondence and denoted by $\msf{SS}$, is a bijection between the set of $(M, \nu)$ and the collection of pairs $(P,Q)$ of semi-standard tableaux of general skew shape, where $M=(\smash{m_{(i,j);k}})_{1\leq i,j\leq N, k=0,1, \ldots}$ is a filling of the infinite strip by non-negative integers which are eventually all zero and $\nu$ is a partition. The second is a bijection $\Upsilon$ introduced in \cite{imamura2021skew} between the collection of such $(P,Q)$ and tuples of the form $(V,W; \kappa, \nu)$, where $V, W$ are vertically strict tableaux of shape $\mu$ (which is a function of $P$, $Q$) and $\kappa \in \mathcal K(\mu)$ is an ordered tuple of non-negative integers with certain constraints on the entries depending on $\mu$, encoded by the set $\mathcal K(\mu)$. We will not need the definition of $\mathcal K(\mu)$ for our arguments, but the interested reader is referred to \cite[eq. (1.22)]{imamura2021skew} for it. 

If one applies $\msf{SS}$ to $(M, \nu)$ and then $\Upsilon$ to the result, the $\nu$ in the resulting output $(V,W, \kappa; \nu)$ is the same as the starting one (see the line following \cite[Theorem~1.4]{imamura2021skew}). Thus the $\nu$ can be factored out, yielding a bijection between the set of $M$ and the set of $(V,W,\kappa)$; we call this $\tilde\Upsilon$, as in \cite{imamura2021skew}.

Now, \cite[Theorem~1.2]{imamura2021skew} asserts that $L^{(j)}(M)$ is equal to $\mu_1+ \ldots +\mu_j$, the sum of the lengths of the first $j$ rows in the partition $\mu$ from the previous paragraph, for all $1\leq j\leq N$ (this is a deterministic statement for $L$ defined with respect to any fixed entries of the environment). So we need to understand the distribution of $\mu$ under the map $\tilde \Upsilon$ when the entries $\smash{m_{(i,j);k}}$ of $M$ are distributed as independent $\mrm{Geo}(q^ka_ib_j)$, in particular, show that it is $\smash{\mathbb W^{(q)}_{a,b}}$.

For this task we will need certain weight preservation properties of $\tilde \Upsilon$ which we record in the next lemma.

\begin{lemma}\label{l.weight preservation}
For an infinite matrix  $M$ and $(V,W; \kappa)$ as above, define weight functions
\begin{align*}
W_1(M) &= \Bigl(\sum_k\sum_j m_{(i,j);k}\Bigr)_{1\leq i\leq N}\\
W_2(M) &= \Bigl(\sum_k\sum_i m_{(i,j);k}\Bigr)_{1\leq j\leq N}\\
W_3(M) &= \sum_k\sum_{i,j} k m_{(i,j);k}.
\end{align*}
and, with $\#(U,i)$ being the number of times the entry $i$ appears in the vertically strict tableaux $U$,
\begin{align*}
\widetilde W_1(V,W,\kappa) &= \bigl(\#(V,i)\bigr)_{1\leq i\leq N}\\
\widetilde W_2(V,W,\kappa) &= \bigl(\#(W,j)\bigr)_{1\leq j\leq N}\\
\widetilde W_3(V,W,\kappa) &= \mathcal H(V) + \mathcal H(W) + \sum_{i}\kappa_i,
\end{align*}
where $\mathcal H$ is the ``intrinsic energy function''. Its definition is complicated and not strictly needed for our purposes, so the interested reader is referred to \cite[Definition~7.4]{imamura2021skew} for a precise definition.

Then, if $M$ and $(V,W,\kappa)$ are in bijection via $\tilde \Upsilon$, it holds that, for $i=1,2,3$,
\begin{align*}
W_i(M) = \widetilde W_i(V,W,\kappa).
\end{align*}
\end{lemma}
We will prove this after completing the proof of Theorem~\ref{t.q-pushTASEP to lpp}. We wish to calculate $\P(L^{(k)}(M) = \sum_{i=1}^k\mu_i\ \text{ for all} 1\leq k\leq n)$ where $M$ is distributed according to independent geometric random variables as above, and show that this is equal to the first $k$ row lengths marginal of the $q$-Whittaker measure. We will instead show the stronger statement that the law of the shape of $V$ (or $W$) obtained by applying $\tilde \Upsilon$ to $M$ with $\smash{m_{(i,j);k}} \sim \mrm{Geo}(q^ka_ib_j)$ is the $q$-Whittaker measure. Then marginalizing to the lengths of the first $k$ rows will complete the proof. Denoting the $V$ obtained by applying $\tilde\Upsilon$ to $M$ by $V(M)$ and by $\mrm{vst}(\mu)$ the set of vertically-strict tableau of shape $\mu$,
\begin{align*}
\P\left(V(M) \in \mrm{vst}(\mu)\right)
&\propto \sum_{M:V(M) \in \mrm{vst}(\mu)} \prod_{i,j,k}(q^ka_ib_j)^{m_{(i,j);k}}\\
&= \sum_{M:V(M) \in \mrm{vst}(\mu)} q^{\sum_{i,j,k}km_{(i,j);k}} \prod_{i}a_i^{\sum_j m_{(i,j);k}}\prod_j b_j^{\sum_i m_{(i,j);k}}\\
&= \sum_{M:V(M) \in \mrm{vst}(\mu)}  q^{W_3(M)} a^{W_1(M)} b^{W_2(M)}.
\end{align*}
By applying the bijection $\tilde \Upsilon$ and Lemma~\ref{l.weight preservation}, and recalling the definitions of $\widetilde W_i$, we see that the previous line equals (letting $x^V = \prod_{i} x_i^{\#(V,i)}$)
\begin{align*}
\sum_{\substack{V, W\in \mrm{vst}(\mu) \\ \kappa\in\mathcal K(\mu)}}  q^{\widetilde W_3(V,W,\kappa)} a^{\widetilde W_1(V,W,\kappa)} b^{\widetilde W_2(V,W,\kappa)}
&= \sum_{\kappa\in\mathcal K(\mu)} q^{|\kappa|} \times \sum_{V \in \mrm{vst}(\mu)} q^{\mathcal H(V)}a^{V} \times \sum_{W\in\mrm{vst}(\mu)} q^{\mathcal H(W)}b^{W}.
\end{align*}
Now it is known that the first factor is $\mathdutchcal b_\mu(q)$ (see for example \cite[eq. (10.5)]{imamura2021skew}), while the second and third factors are $\mathscr P_\mu(a; q)$ and $\mathscr P_\mu(b; q)$ (see \cite[Proposition~10.1]{imamura2021skew}). Thus the RHS is the unnormalized probability mass function of $\smash{\mathbb W^{(q)}_{a,b}}$ at $\mu$, as desired. %
\end{proof}

\begin{proof}[Proof of Lemma~\ref{l.weight preservation}]
That $W_3(M) = \widetilde W_3(V,W, \kappa)$ follows by combining eq. (1.23) in \cite[Theorem~1.4]{imamura2021skew} (on the conservation of the quantity under $\Upsilon$) with eq. (4.15) in \cite[Theorem~4.11]{imamura2021skew} (on its conservation under $\msf{SS}$).

For $W_i(M)$ for $i=1,2$ we will similarly quote separate statements for its conservation under $\msf{SS}$ and $\Upsilon$. Under $\msf{SS}$, this is a consequence of \cite[Theorem~6.6]{sagan1990robinson} (which is also the source of \cite[Theorem~4.11]{imamura2021skew} mentioned in the previous paragraph), i.e., it holds that $W_1(M) = (\#(P, i))_{1\leq i\leq N}$ and $W_2(M) = (\#(Q, j))_{1\leq j\leq N}$. For $\Upsilon$ this preservation property is not recorded explicitly in \cite{imamura2021skew}, but it is easy to see it from its definition. Indeed, as described in \cite[Sections~1.2 and 3.3]{imamura2021skew}, the output $(V,W)$ of $\Upsilon$ is obtained as the asymptotic result of iteratively applying a map known as the \emph{skew RSK} map to $(P,Q)$, and it is immediate from the definition of this map that it does not change the number of times any entry $i$ appears in $P$ or $Q$ (only possibly the location of the entries and/or the shape of the tableaux). Thus this property carries over to $\Upsilon$.
\end{proof}

\section{Asymptotics for the sum in the factorial moments formula}\label{app.sum asymptotics}

Here we obtain upper and lower bounds (with the correct dependencies on $q$ and $k$) on the sum in \eqref{e.factorial moment simplified upper bound}, which we label $S$, i.e., (recall $H(x) = -x\log x-(1-x)\log(1-x)$)
\begin{equation}\label{e.S formula}
\begin{split}
\MoveEqLeft[4]
S := \sum_{i=0}^k\frac{1}{\tfrac{i}{k}(1-\tfrac{i}{k})}\exp\biggl[N\biggl\{\tfrac{i}{N}\log q^{-1} + \frac{2k}{N}\cdot H(i/k)\\
& + \left(1-\frac{i+1}{N}\right)\log\left(1+\frac{k+1}{N-i-1}\right) + \frac{k+1}{N}\log\left(1+\frac{k-i}{N}\right) -\frac{k+1}{N}\biggr\}\biggr].
\end{split}
\end{equation}
As indicated, the idea behind the analysis is simply Laplace's method, but it must be done carefully and explicitly here since we need to obtain the estimates uniformly in $q$. We start with the upper bound, and turn to the lower bound in Section~\ref{s.laplace lower bound}.

\subsection{The upper bound}
\begin{proposition}\label{p.ledoux sum upper bound}
There exist positive constants $C$, $k_0$, and $N_0$ such that for all $N\geq N_0$, $k_0\leq k\leq N$, and $q\in(0,1)$, there exists $x_0 = 1-\Theta(q^{1/2})$ such that
$$S
\leq C q^{-1/4} k^{1/2}\left[\frac{\left(1+q^{1/2}\right)^2}{q}\right]^k \exp\left(N\left[(\alpha-\mu_q)\log\left(1-\frac{\alpha}{\mu_q}\right) - \alpha  + Cq^{1/2}\alpha^3\right]\right) .$$
\end{proposition}

\begin{proof}%
Recall that $\alpha$ is defined by $k=\alpha N$. Also recall the definition of $f_\alpha, g:[0,1]\to \R$ from \eqref{e.f_alpha first defn} by
\begin{equation}\label{e.f definition}
\begin{split}
f_\alpha(x) &= \alpha x\log q^{-1} + 2\alpha H(x) + \alpha \log(1+\alpha(1-x)) + (1-\alpha x)\log\left(1+\frac{\alpha}{1-\alpha x}\right) -\alpha\\
g(x) &= \left(x(1-x)\right)^{-1}.
\end{split}
\end{equation}
Then the sum $S$ is
\begin{align*}
S = \sum_{i=1}^{k-1}g(i/k)\exp\Bigl(Nf_\alpha(i/k) + )(1)\Bigr);
\end{align*}
the $O(1)$ term is to account for the stray $\pm 1$ we have ignored when going from the definition of $S$ to its form in terms of $f_\alpha$.

The first step is to identify the location $x_0$ where $f_\alpha$ is maximized.  \begin{align*}
f_\alpha'(x)
&= \alpha \log q^{-1} + 2\alpha \log\left(\frac{1-x}{x}\right) - \alpha\log\left(1+\frac{\alpha}{1-\alpha x}\right) + (1-\alpha x)\cdot \frac{1}{1+ \frac{\alpha}{1-\alpha x}} \cdot \frac{\alpha}{(1-\alpha x)^2}\cdot \alpha\\
&\qquad + \alpha\cdot\frac{1}{1+\alpha(1-x)}\cdot(-\alpha) \\
&= \alpha \log q^{-1} + 2\alpha \log\left(\frac{1-x}{x}\right) - \alpha\log\left(1+\frac{\alpha}{1-\alpha x}\right).
\end{align*}
Therefore we see that $f_\alpha'(x) = 0$ is equivalent to
\begin{align}\label{e.maximizer relation}
x^{-1} -1 = q^{1/2}\left(1+\frac{\alpha}{1-\alpha x}\right)^{1/2}.
\end{align}
Observe that the LHS tends to $\infty$ as $x \to 0$ and equals $0$ when $x=1$. Further the LHS is decreasing while the RHS is increasing in $x$, and both are continuous in $x$. Thus there is a unique $x_0\in(0,1)$ satisfying \eqref{e.maximizer relation}.

Further, the same observations yield (by evaluating the RHS of \eqref{e.maximizer relation} at $x=0$ and $x=1$ and solving for $x$ on the LHS) that
\begin{align}\label{e.x_0 range}
x_0\in \left[\frac{1}{1+q^{1/2}(1+\alpha)^{1/2}}, \frac{1}{1+q^{1/2}(1-\alpha)^{-1/2}}\right] =: I_{q,\alpha}.
\end{align}
We further see that the size of $I_{q,\alpha}$ is
\begin{align}\label{e.size of x_0 range}
O\left(q^{1/2}\left[\left(1-\alpha\right)^{-1/2} - (1+\alpha)^{1/2}\right]\right) = O\left(q^{1/2}\alpha^2\right).
\end{align}
As a result, if $x\in I_{q,\alpha}$, and since $|f_\alpha'(x)| = O(\alpha)$ for such $x$, the mean value theorem implies that $|f_\alpha(x) - f_\alpha(x_0)| = O(q^{1/2}\alpha^3)$.

\subsection*{Evaluating $f_\alpha(x_0)$}
Using these estimates we may obtain an estimate of $f_\alpha(x_0)$. Let us first evaluate the first three terms from \eqref{e.f definition} at 
$$x=x^* := \left(1+q^{1/2}\exp(\tfrac{1}{2}\alpha)\right)^{-1}.$$
(Note that since $1+x\leq \exp(x) \leq (1-x)^{-1}$, $x^*\in I_{q,\alpha}$.)  We see that the three terms evaluated at $x^*$ equal
\begin{align*}
\MoveEqLeft
\alpha x^*\log q^{-1} - 2\alpha (x^*\log x^* + (1-x^*)\log(1-x^*)) + \alpha \log\left(1+\alpha(1-x^*)\right)\\
&= \alpha x^*\log q^{-1} - 2\alpha \left[x^*\log x^* + (1-x^*)\log \frac{q^{1/2}\exp(\tfrac{1}{2}\alpha)}{1+q^{1/2}\exp(\tfrac{1}{2}\alpha)}\right] + \alpha \log\left(1+\alpha(1-x^*)\right)\\
&= \alpha x^*\log q^{-1} - 2\alpha \left[x^*\log x^* + \tfrac{1}{2}(1-x^*)\log \left(q\exp(\alpha)\right) + (1-x^*)\log \frac{1}{1+q^{1/2}\exp(\tfrac{1}{2}\alpha)}\right]\\
&\qquad + \alpha \log\left(1+\alpha(1-x^*)\right).
\end{align*}
We recognize $(1+q^{1/2}\exp(\alpha/2))^{-1}$ to be $x^*$, which results in a cancellation with the $x^*\log x^*$ term. With this, and writing $\log(1+\alpha(1-x^*)) = \alpha(1-x^*) \pm O(\alpha^2(1-x^*)^2)$, we see that the previous display equals
\begin{align*}
\MoveEqLeft
\alpha x^*\log q^{-1} - 2\alpha \left[\log x^* + \tfrac{1}{2}(1-x^*)\log q + \tfrac{1}{2}(1-x^*)\alpha\right] + \alpha \cdot(\alpha(1-x^*)) \pm O(\alpha^3(1-x^*)^2)\\
&= \alpha x^*\log q^{-1} - 2\alpha\log x^*  - \alpha (1-x^*)\log q \pm O(\alpha^3(1-x^*)^2)\\
&= \alpha \log\left(q^{-1}(1+q^{1/2}\exp(\tfrac{1}{2}\alpha))^2\right) \pm O\left(\alpha^3q\right)\\
&= \alpha \log\left(q^{-1}(1+q^{1/2})^2\right) + \alpha^2\frac{q^{1/2}}{1+q^{1/2}} \pm O(\alpha^3q^{1/2}),
\end{align*}
where we performed a Taylor expansion of $\exp(\alpha/2)$ as well as of the logarithm in the last step.

Next we must move from this calculation done at $x^*$ to $x_0$. Observe that the derivative with respect to $x$ of both the first three terms in \eqref{e.f definition} of $f_\alpha$ (which we will refer to as $\smash{f_\alpha^{(1)}}$ below) gives an expression with a common factor of $\alpha$. By the mean value theorem, we therefore have that, for some $y\in[x_0, x^*]$ (or $y\in[x^*, x_0]$ depending on which is greater),
\begin{align*}
|f^{(1)}_\alpha(x_0) - f^{(1)}_\alpha(x^*)| = |x_0-x^*|\cdot|(f^{(1)}_\alpha)'(y)| = O(1)\alpha^3q^{1/2},
\end{align*}
the last equality using that $|(f^{(1)}_\alpha)'(y)| = O(\alpha)$ and $|x_0-x^*| = O(\alpha^2q^{1/2})$ from \eqref{e.size of x_0 range} and since $x_0, x^*\in I_{q,\alpha}$.

Putting back in the the remaining two terms of $f_\alpha$ not included in $f_\alpha^{(1)}$, overall we have shown that
\begin{align}
f_\alpha(x_0)
&= \alpha \log\left(q^{-1}(1+q^{1/2})^2\right) + (1-\alpha x_0)\log\left(1+\frac{\alpha}{1-\alpha x_0}\right) -\alpha + \alpha^2\frac{q^{1/2}}{1+q^{1/2}} \pm O\left(\alpha^3q^{1/2}\right).\nonumber
\end{align}
This can be simplified further using the following cancellation, which we prove in Section~\ref{app.factorial to poly cancellation}.

\begin{lemma}\label{l.factorial to poly cancellation}
Let $x_0 = x_0(\alpha)$ be the maximizer of $f_\alpha$ over $[0,1]$. For $\alpha\in(0,\frac{1}{2}]$ (and adopting the convention that $O(1)$ refers to a quantity whose value is upper bounded by an absolute constant)
\begin{equation}
\begin{split}\label{e.factorial to poly terms}
\MoveEqLeft[16]
(1-\alpha x_0)\log\left(1+\frac{\alpha}{1-\alpha x_0}\right) + \alpha^2\cdot\frac{q^{1/2}}{1+q^{1/2}} + (\mu_q-\alpha)\log\left(1-\frac{\alpha}{\mu_q}\right)=  \pm O(1)q^{1/2}\alpha^3.
\end{split}
\end{equation}
\end{lemma}%

This yields that
\begin{align}\label{e.f_alpha(x_0) expression}
f_\alpha(x_0) = \alpha \log\left(q^{-1}(1+q^{1/2})^2\right) + (\alpha-\mu_q)\log\left(1-\frac{\alpha}{\mu_q}\right) -\alpha \pm O\left(\alpha^3q^{1/2}\right).
\end{align}

\subsection*{Performing Laplace's method}
For future reference we also record that
\begin{align}\label{e.f second derivative}
f_\alpha''(x) = -\frac{2\alpha}{x(1-x)} - \frac{\alpha^3}{(1-\alpha x)^2 + \alpha(1-\alpha x)} \implies f_\alpha''(x_0) = -\Theta(\alpha q^{-1/2})
\end{align}
since $x_0 = 1-\Theta(q^{1/2})$.
To apply Laplace's method, we need to have bounds on $f_\alpha$ over its domain, which we will obtain by Taylor approximations. We will expand to third order as we need to include the just calculated second order term precisely. So next we bound the third derivative of $f_\alpha$.

Fix $c>0$. We observe that there exists $C$ such that for $x\in[\frac{1}{4}, 1-cq^{1/2}]$ and $\alpha\in(0,\frac{1}{2}]$,
\begin{equation}\label{e.f third derivative}
f_\alpha'''(x) = -2\alpha\left(\frac{1}{(1-x)^2} - \frac{1}{x^2}\right) - \frac{\alpha^4\left(2(1-\alpha x)+\alpha\right)}{\left( (1-\alpha x)^2 + \alpha (1-\alpha x)\right)^2} \geq -C\alpha q^{-1}.
\end{equation}
So by Taylor's theorem, we see that, if $\varepsilon>0$ is such that $x_0-\varepsilon q^{1/2}\in[\frac{1}{4}, 1-cq^{1/2}]$, then for some $y\in[x_0-\varepsilon q^{1/2}, x_0]$,
\begin{align*}
f_\alpha(x_0-\varepsilon q^{1/2})
&= f_\alpha(x_0) - \varepsilon q^{1/2} f_\alpha'(x_0) + \tfrac{1}{2}\varepsilon^2qf_\alpha''(x_0) - \tfrac{1}{6}\varepsilon^3q^{3/2} f'''(y)\\
&\leq f_\alpha(x_0) - \varepsilon q^{1/2} f_\alpha'(x_0) + \tfrac{1}{2}\varepsilon^2qf_\alpha''(x_0) + \tfrac{1}{6}C\alpha q^{1/2}\varepsilon^3\\
&= f_\alpha(x_0) + \tfrac{1}{2}\varepsilon^2qf_\alpha''(x_0) + \tfrac{1}{6}C\alpha q^{1/2}\varepsilon^3
\end{align*}
since $f_\alpha'(x_0) = 0$. We may pick $\varepsilon_0$ a constant depending only on $C$ such that, if $0<\varepsilon<\varepsilon_0$, then (recalling that $f_\alpha''(x_0)<0$)
$$\tfrac{1}{6}C\alpha q^{1/2}\varepsilon < \tfrac{1}{4}q|f_\alpha''(x_0)|;$$
that $\varepsilon_0$ can be taken to not depend on $q$ or $\alpha$ follows from the fact that $|f_\alpha''(x_0)|$ can be lower bounded by an absolute constant times $\alpha q^{-1/2}$, due to \eqref{e.f second derivative}. Similarly, since $f'''(x) < C\alpha$ for $x>\frac{1}{4}$, it holds for $\varepsilon>0$ that (since $x_0>\frac{1}{4}$ always)
\begin{align*}
f_\alpha(x_0 + \varepsilon q^{1/2}) \leq f_\alpha(x_0) + \tfrac{1}{2}\varepsilon^2 q f_\alpha''(x_0) + \tfrac{1}{6}C\alpha \varepsilon^3 q^{3/2} \leq f_\alpha(x_0) + \tfrac{1}{4}\varepsilon^2 q f_\alpha''(x_0),
\end{align*}
the last inequality for $\varepsilon < \varepsilon_0$ for some absolute constant $\varepsilon_0$ by similar reasoning as above.

So, for $-\varepsilon_0 < \varepsilon < \varepsilon_0$,
\begin{align}\label{e.f quadratic around maxima}
f_\alpha(x_0+\varepsilon q^{1/2}) \leq f_\alpha(x_0) - \tfrac{1}{4}\varepsilon^2q|f_\alpha''(x_0)|.
\end{align}
The above controls $f_\alpha$ inside $[x_0-\varepsilon q^{1/2}, x_0+\varepsilon q^{1/2}]$, where $0<\varepsilon < \varepsilon_0$. We will also need control outside this interval, which we turn to next.
We observe that, since $f_\alpha$ is concave on $(0,1)$ and $f_\alpha'(x_0) = 0$, it holds for $0<\varepsilon<\varepsilon_0$ and $x\in(0,x_0-\varepsilon q^{1/2}]$ that
\begin{align}
f_\alpha(x)
&\leq f_\alpha(x_0 - \varepsilon q^{1/2}) + (x-x_0-\varepsilon q^{1/2}) f'(x_0-\varepsilon q^{1/2})\nonumber\\
&\leq f_\alpha(x_0) - \tfrac{1}{4}\varepsilon^2q|f''(x_0)| - (x_0-x)f'(x_0-\varepsilon q^{1/2})\nonumber\\
&\leq f_\alpha(x_0) - (x_0-x)C \varepsilon \alpha. \label{e.f bound on left interval}
\end{align}
the last inequality using again that $|f_\alpha''(x_0)| = \Theta(\alpha q^{-1/2})$ and using Taylor's theorem for $f_\alpha'$ around $x_0$ to obtain $f_\alpha'(x_0-\varepsilon q^{1/2}) = f_\alpha'(x_0) - \varepsilon q^{1/2} f_\alpha''(y)$ for some $y\in[x_0-\varepsilon q^{1/2}, x_0]$ and using  \eqref{e.f second derivative} to then obtain that $f_\alpha'(x_0-\varepsilon q^{1/2}) \geq C\varepsilon \alpha$.

Similarly, since $f_\alpha$ is concave, $f'(x_0) = 0$, and \eqref{e.f quadratic around maxima}, it follows for $x\in[x_0+\varepsilon q^{1/2}, 1]$ that
\begin{align}
f_\alpha(x) \leq f_\alpha(x_0 + \varepsilon q^{1/2}) \leq f_\alpha(x_0) - \tfrac{1}{4} q\varepsilon^2 |f''_\alpha(x_0)|\leq f_\alpha(x_0) - C \alpha q^{1/2}\varepsilon^2.  \label{e.f bound on right interval}
\end{align}
We next analyze the actual sum $S$. We break up $S$ into three subsums $S_1$, $S_2$, and $S_3$. Let $0 < \varepsilon< \varepsilon_0$ be fixed. $S_1$ corresponds to $i = 1$ to $i=\lceil k(x_0-\varepsilon q^{1/2})\rceil$, $S_2$ to $i=\lceil k(x_0-\varepsilon q^{1/2})\rceil+1$ to $\lceil k(x_0+\varepsilon q^{1/2})\rceil$, and $S_3$ to $\lceil k(x_0+\varepsilon q^{1/2})\rceil +1$ to $k-1$.

We start by bounding $S_1$ using the above groundwork. Recall that $g(x) = (x(1-x))^{-1}$. Recall that $g(i/k) = k^2/(i(k-i))$. We drop the $\lceil\rceil$ in the notation for convenience. Using \eqref{e.f bound on left interval}, that $q\leq 1$, and that $\alpha N=k$,
\begin{align*}
S_1 = \sum_{i=1}^{k(x_0-\varepsilon q^{1/2})+1} g(i/k)\exp\left(Nf_\alpha(i/k)\right)
&\leq  e^{Nf_\alpha(x_0)}\sum_{i=1}^{k(x_0-\varepsilon q^{1/2})+1} \frac{k^2}{i(k-i)}\exp\Bigl(-C\varepsilon (kx_0-i)\Bigr)\\
&= e^{Nf_\alpha(x_0)}\!\!\sum_{j=k\varepsilon q^{1/2} -1}^{kx_0 - 1} \frac{k^2}{(kx_0-j)(k(1-x_0) +j)}\exp\Bigl(-C\varepsilon j\Bigr).
\end{align*}
It is easy to see that the sum is bounded by $Cq^{-1/2}$ for an absolute constant $C$ depending on $\varepsilon$, as this is the behaviour near $j=k\varepsilon q^{1/2}$ (using that $x_0 = 1-\Theta(q^{1/2})$). At the same time, it is easy to check that for $k\geq 2$ and $1\leq j\leq k-1$, $g(i/k)\leq 2k$, so that $S_1$ is also upper bounded by $Ck\exp(Nf_\alpha(x_0))$. Thus 
$$S_1 \leq C\min(q^{-1/2}, k)e^{Nf_\alpha x_0} \leq C q^{-1/4}k^{1/2}e^{Nf_\alpha(x_0)}.$$

Next we bound $S_3$. 
We apply \eqref{e.f bound on right interval} and use again that $g(i/k) \leq 2k$ and $x_0=1-\Theta(q^{1/2})$ to see that
\begin{align*}
S_3 = \sum_{i=k(x_0+\varepsilon q^{1/2})+1}^{k-1} g(i/k)\exp\left(Nf_\alpha(i/k)\right)
&\leq e^{Nf_\alpha(x_0)}\cdot 2k\cdot k(1-x_0-\varepsilon q^{1/2})\exp(-C\varepsilon^2 N \alpha q^{1/2})\\
&\leq C e^{Nf_\alpha(x_0)}\cdot k^2\cdot q^{1/2}\exp(-Ckq^{1/2}).
\end{align*}
Now $k^2q^{1/2} = q^{-1/4}k^{1/2}\cdot (kq^{1/2})^{3/2}$ and, since $x\mapsto x^{3/2}\exp(-cx)$ is uniformly bounded over $x\geq 0$, this implies from the previous display that
\begin{align*}
S_3 \leq Cq^{-1/4}k^{1/2}e^{Nf_\alpha(x_0)}.
\end{align*}

Finally we turn to the main sum, $S_2$, which consists of the range $\frac{i}{k}\in[x_0-\varepsilon q^{1/2}, x_0+\varepsilon q^{1/2}]$. We first want to say that, for $x$ in the same range, $g(x) \leq Cg(x_0)$ for some absolute constant $C$. Observe that $g$ blows up near $1$ (and $x_0$ can be arbitrarily close to $1$), and it is to avoid this and thereby be able to control $g$ on the mentioned interval that its upper boundary is of order $q^{1/2}$ above $x_0$.

\begin{lemma}\label{l.g control}
There exists an absolute constant $C$ such that for $x\in[x_0-\varepsilon q^{1/2}, x_0+\varepsilon q^{1/2}]$, $g(x) \leq Cg(x_0)$.
\end{lemma}

\begin{proof}
We have to upper bound $g(x)/g(x_0) = \smash{\frac{x_0(1-x_0)}{x(1-x)}}$. When $x\in[x_0-\varepsilon q^{1/2}, x_0]$, this ratio is upper bounded by $x_0/x$; since $x\geq x_0-\varepsilon q^{1/2}$, and $x_0\geq \frac{1}{4}$ always, if $\varepsilon<\frac{1}{8}$ say, the ratio is uniformly upper bounded. When $x\in[x_0, x_0+\varepsilon q^{1/2}]$, this ratio is upper bounded by $(1-x_0)/(1-x)$; since $1-x\geq 1-x_0-\varepsilon q^{1/2} \geq \frac{1}{2}(1-x_0)$ (using that $\varepsilon q^{1/2} \leq \frac{1}{2}(1-x_0) = cq^{1/2}$ whenever $\varepsilon$ is small enough), the ratio is again upper bounded by a constant.
\end{proof}

So we see, from Lemma~\ref{l.g control} and \eqref{e.f quadratic around maxima} that $S_2$ equals
\begin{align*}
\MoveEqLeft[9]
\sum_{i=k(x_0-\varepsilon q^{1/2})+1}^{k(x_0+\varepsilon q^{1/2})} g(i/k)\exp\left(Nf_\alpha(i/k)\right)\\
&\leq Cg(x_0) \sum_{i=k(x_0-\varepsilon q^{1/2})+1}^{k(x_0+\varepsilon q^{1/2})} \exp\left[N\left(f_\alpha(x_0) - c\left(\frac{i}{k}-x_0\right)^2|f_\alpha''(x_0)|\right)\right]\\
&\leq Cg(x_0)e^{Nf_\alpha(x_0)} \sum_{i=-\infty}^{\infty} \exp\left[-cNk^{-2}\left(i-kx_0\right)^2|f_\alpha''(x_0)|\right]\\
&= Cg(x_0)e^{Nf_\alpha(x_0)} \sum_{i=-\infty}^{\infty} \exp\left[-c\alpha^{-1}k^{-1}i^2|f_\alpha''(x_0)|\right].
\end{align*}
Recall that $f_\alpha''(x_0)=-\Theta(\alpha q^{-1/2})$, and so the coefficient of $i^2$ is of order $c(q^{1/2}k)^{-1}$. We bound the above series using Proposition~\ref{p.quadratic sum bound} ahead, which says, with $\gamma = c\alpha^{-1}k^{-1}|f_\alpha''(x_0)| = \Theta((q^{1/2}k)^{-1})$, and using that $g(x_0) = \Theta(q^{-1/2})$,
\begin{align*}
S_2
\leq Cg(x_0) (\alpha^{-1}k^{-1}|f_\alpha''(x_0)|)^{-1/2} e^{Nf_\alpha(x_0)}
&\leq C q^{-1/2}\cdot k^{1/2} q^{1/4}\cdot e^{Nf_\alpha(x_0)}
= Cq^{-1/4}k^{1/2}e^{Nf_\alpha(x_0)}.
\end{align*}
Note that this estimate holds for all $q>0$ and dos not require $q\geq k^{-2}$.
Thus overall we have shown that
$$S= S_1+S_2+S_3 
 \leq Cq^{-1/4}k^{1/2}e^{Nf_\alpha(x_0)}.$$
Using the expression for $f_\alpha(x_0)$ from \eqref{e.f_alpha(x_0) expression} and recalling $\alpha N = k$ completes the proof.
\end{proof}

The following is the bound on the discrete Gaussian sum, more precisely a Jacobi theta function, which we used in the proof. It can be proved using the Poisson summation formula and straightforward bounds.

\begin{proposition}[page 157 of \cite{stein2011fourier}]\label{p.quadratic sum bound}
There exists $C$ and, for any $M>0$, a constant $c_M>0$ such that for $0<\gamma\leq M$ (for the first inequality) and $\gamma>0$ (for the second),
\begin{align*}
c_M\gamma^{-1/2}\leq \sum_{i\in\Z} e^{-\gamma i^2} \leq C\gamma^{-1/2}.
\end{align*}
\end{proposition}

\subsection{The lower bound} \label{s.laplace lower bound}
Recall the definition of $S$ from \eqref{e.S formula}.

\begin{proposition}\label{p.ledoux sum lower bound}
There exist positive constants $C$ and $k_0$ such that for all $q\in[k^{-2},1)$ and $k_0\leq k\leq N$,
$$S  \geq C^{-1} q^{-1/4} k^{1/2}\left[\frac{(1+q^{1/2})^2}{q}\right]^k\exp\left(N\left[(\alpha-\mu_q)\log\left(1-\frac{\alpha}{\mu_q}\right) -\alpha  -C\alpha^3q^{1/2}\right]\right) .$$
\end{proposition}

\begin{proof}
As in the proof of Proposition~\ref{p.ledoux sum upper bound}, we focus around the point $Nx_0$, where $x_0 = (1+q^{1/2}\exp(\frac{1}{2}\alpha))^{-1} \pm O(q^{1/2}\alpha^2)$ satisfies \eqref{e.maximizer relation}. So, for $\varepsilon,\varepsilon'>0$ to be chosen,
\begin{align*}
S
&\geq \sum_{i=k(x_0-\varepsilon)}^{k(x_0+\varepsilon')} \frac{1}{\tfrac{i}{k}(1-\tfrac{i}{k})}\exp\biggl[N\biggl\{\tfrac{i}{N}\log q^{-1} + \frac{2k}{N}\cdot H(i/k)\\
&\qquad\qquad + \left(1-\frac{i+1}{N}\right)\log\left(1+\frac{k+1}{N-i-1}\right) + \frac{k+1}{N}\log\left(1+\frac{k-i}{N}\right) -\frac{k+1}{N}\biggr\}\biggr]\\
&= \sum_{i=k(x_0-\varepsilon)}^{k(x_0+\varepsilon')} g(i/k)\exp\left[Nf_\alpha(i/k)\right],
\end{align*}
where $g(x) = (x(1-x))^{-1}$ and $f_\alpha$ is as defined in \eqref{e.f definition}.
We break into two cases depending on whether $x_0>\frac{1}{2}$ or $x_0\leq \frac{1}{2}$. We start with the first case.

In this case the basic issue is that $x_0$ can be arbitrarily close to $1$. Thus, since the contribution of the sum from $i=kx_0$ to $i=k-1$ will be small anyway, we will ignore it and set $\varepsilon'=0$. We will also set $\varepsilon$ to be an absolute constant. With these values, we want to show that, for some $c>0$ and all $x\in[x_0-\varepsilon, x_0]$, it holds that $g(x) \geq cg(x_0)$. This is easy to verify by upper bounding $g(x)/g(x_0)$ for $x$ in the same range using the expression for $g(x)$, the value of $\varepsilon$, and that $x_0\in[\frac{1}{4}, 1]$. So, for some absolute constant $c>0$,
$$S\geq cg(x_0)\sum_{i=k(x_0-\varepsilon)}^{kx_0} \exp\left[Nf_\alpha(i/k)\right].$$
Now as in the proof of Proposition~\ref{p.ledoux sum upper bound}, we Taylor expand $f_\alpha$ around $x_0$ to obtain a lower bound on $f_\alpha(x)$ for $x\in[x_0-\varepsilon, x_0]$:
\begin{align*}
f_\alpha(x) &\geq f_\alpha(x_0) - \frac{(x-x_0)^2}{2}|f_\alpha''(x_0)| + \frac{1}{6}\inf_{y\in[x, x_0]} (x-x_0)^3 f_\alpha'''(y)\\
&= f_\alpha(x_0) - \frac{(x-x_0)^2}{2}|f_\alpha''(x_0)| + \frac{1}{6}(x-x_0)^3 \sup_{y\in[x,x_0]}f_\alpha'''(y);
\end{align*}
the second equality by noting that, since $x-x_0 < 0$, the infimum is equivalent to maximizing $f_\alpha'''(y)$ over $y\in[x,x_0]$. 
Now, if $x\geq \frac{1}{2}$, then $y\geq \frac{1}{2}$ in the previous display and $f_\alpha'''(y)<0$ by the explicit formula \eqref{e.f third derivative}, and so the last term in the previous display is non-negative, i.e., can be lower bounded by zero. If $x<\frac{1}{2}$, we can ensure that $\varepsilon<\frac{1}{4}$, so that $x_0 < \frac{3}{4}$. Then we see that each of $|f_\alpha''(x_0)|$ and $|f_\alpha'''(x)|$ are uniformly bounded by $C\alpha$ where $C$ is independent of $q$, so, by picking $\varepsilon$ small enough also independent of $q$, we can ensure that $|x-x_0|f_\alpha'''(x) < Cf_\alpha''(x_0)$ for some absolute constant $C$.

Thus over all, we have shown that, in the case that $x_0\geq \frac{1}{2}$, there exists $\varepsilon>0$ and $C>0$ independent of $q$ such that, for $x\in[x_0-\varepsilon, x_0]$,
\begin{align*}
f_\alpha(x)\geq f_\alpha(x_0) - C(x-x_0)^2|f_\alpha''(x_0)|.
\end{align*}
Using this we see that
\begin{align*}
S
&\geq cg(x_0)e^{Nf_\alpha(x_0)}\sum_{i=k(x_0-\varepsilon)}^{kx_0} \exp\left(-CN|f_\alpha''(x_0)|\left(\frac{i}{k}-x_0\right)^2\right)\\
&= \frac{1}{2}cg(x_0)e^{Nf_\alpha(x_0)}\sum_{i=k(x_0-\varepsilon)}^{k(x_0+\varepsilon)} \exp\left(-CN|f_\alpha''(x_0)|\left(\frac{i}{k}-x_0\right)^2\right).
\end{align*}
Now it is easy to see that $\sum_{i:|i-kx_0| > k\varepsilon} \exp\left(-CNk^{-2}|f_\alpha''(x_0)|(i-kx_0)^2\right) \leq C\exp(-cN|f_\alpha''(x_0)|\varepsilon^2)$, so the previous display is lower bounded by (using Proposition~\ref{p.quadratic sum bound} in the third line)
\begin{align*}
\MoveEqLeft
cg(x_0)e^{Nf_\alpha(x_0)}\left[\sum_{i=-\infty}^{\infty} \exp\left(-CNk^{-2}|f_\alpha''(x_0)|\left(i-kx_0\right)^2\right) - C\exp(-cN|f_\alpha''(x_0)|\varepsilon^2)\right]\\
&=cg(x_0)e^{Nf_\alpha(x_0)}\left[\sum_{i=-\infty}^{\infty} \exp\left(-C\alpha^{-1}k^{-1}|f_\alpha''(x_0)|i^2\right) - C\exp(-cN|f_\alpha''(x_0)|\varepsilon^2)\right]\\
&\geq cg(x_0)e^{Nf_\alpha(x_0)} \left[\alpha^{1/2}k^{1/2}|f_\alpha''(x_0)|^{-1/2} - C\exp(-c\varepsilon^2N|f_\alpha''(x_0)|)\right]\\
&\geq cg(x_0)e^{Nf_\alpha(x_0)} \left[q^{1/4}k^{1/2} - C\exp(-c\varepsilon^2kq^{-1/2})\right],
\end{align*}
using that $|f_\alpha''(x_0)| = \Theta(\alpha q^{-1/2})$ and $\alpha N=k$. This is in turn lower bounded by $cq^{-1/4}k^{1/2}e^{Nf_\alpha(x_0)}$ since $g(x_0) = \Theta(q^{-1/2})$ since $x_0 = 1-\Theta(q^{1/2})$.

In the case that $x_0\leq \frac{1}{2}$, the same proof works after noting that, since also $x_0\geq \frac{1}{4}$ for all $q, \alpha$ (as can be observed from \eqref{e.x_0 range}), quantities like $|f''(x_0)|$, $f'''(x)$, and $g(x)$ are all bounded above and below by absolute constants for $x\in[x_0-\varepsilon, x_0+\varepsilon]$ (assuming $\varepsilon<\frac{1}{8}$ say) and $\alpha\in(0,\frac{1}{2}]$.
\end{proof}

\subsection{The factorial to polynomial moment cancellation}\label{app.factorial to poly cancellation}

Here we give the proof of Lemma~\ref{l.factorial to poly cancellation}. The proof goes by doing a full series expansion of the expression in $\alpha$ around $0$. Performing a Taylor expansion to second order in $\alpha$ of the expression under consideration for fixed $x$, and uniformly (over $\alpha\in(0,\frac{1}{2})$ and $x=x_0=1-\Theta(q^{1/2})$) bounding the third derivative error term by $Cq^{1/2}$ is also possible but somewhat messier.

\begin{proof}[Proof of Lemma~\ref{l.factorial to poly cancellation}]
To establish this we will utilize the series expansion for the logarithm as well as for $(1-x)^{-k}$.
We start with the first term: for any $x$,
\begin{align*}
(1-\alpha x)\log\left(1+\frac{\alpha}{1-\alpha x}\right)
&= \sum_{j=1}^{\infty}(-1)^{j-1} \frac{\alpha^j}{j(1-\alpha x)^{j-1}}\\
&= \alpha + \sum_{j=2}^{\infty}(-1)^{j-1}\frac{\alpha^j}{j}\sum_{i=0}^{\infty} (\alpha x)^{i}\binom{j+i-2}{i}.
\end{align*}
Collecting the $\alpha^\ell$ terms together, the previous line equals
\begin{align*}
\alpha + \sum_{\ell=2}^{\infty}\alpha^{\ell}\sum_{i=0}^{\ell -2} \frac{(-1)^{\ell-i-1}}{\ell-i}x^{i}\binom{\ell-2}{i}
&= \alpha -\tfrac{1}{2}\alpha^2 + \sum_{\ell=3}^{\infty}\alpha^{\ell}\sum_{i=0}^{\ell -2} \frac{(-1)^{\ell-i-1}}{\ell-i}x^{i}\binom{\ell-2}{i}.
\end{align*}
Invoking Lemma~\ref{l.coefficient identity} ahead, the sums in the previous display equals
\begin{align*}
\sum_{\ell=3}^{\infty}\frac{\alpha^{\ell}}{\ell(\ell-1)}\left[(-1+x)^{\ell} + \ell(-1+x)^{\ell-1} - x^{\ell}\right]
&= \sum_{\ell=3}^{\infty}\frac{\alpha^{\ell}}{\ell(\ell-1)}\left[(-1+x)^{\ell-1}(-1+x+\ell) - x^{\ell}\right].
\end{align*}
Now turning to the third term in \eqref{e.factorial to poly terms},
\begin{align*}
(\mu_q-\alpha)\log\left(1-\frac{\alpha}{\mu_q}\right)
&= -(\mu_q-\alpha)\sum_{j=1}^\infty \frac{\alpha^j}{j\mu_q^j}
= -\alpha + \frac{\alpha^2}{2\mu_q}  + \sum_{\ell=3}^\infty \frac{\alpha^\ell}{\ell(\ell-1) \mu_q^{\ell-1}}.
\end{align*}
Thus we see overall that the LHS of \eqref{e.factorial to poly terms} equals (where $x_0$ is the solution of \eqref{e.maximizer relation})
\begin{align} \label{e.coefficient cancellation sum}
\MoveEqLeft
\alpha^2\left(-\frac{1}{2} + \frac{1}{2\mu_q} + \frac{q^{1/2}}{1+q^{1/2}}\right) + \sum_{\ell=3}^\infty \frac{\alpha^{\ell}}{\ell(\ell-1)} \left[(-1+x_0)^{\ell-1}(-1+x_0+\ell) - x_0^{\ell} + \mu_q^{-(\ell-1)}\right].
\end{align}
We focus on the coefficient of $\alpha^2$ first, which, using that $\mu_q = (1+q^{1/2})/(1-q^{1/2})$, simplifies to $0$. 

Now we turn to the sum in \eqref{e.coefficient cancellation sum}. We observe that $\mu_q= 1+\Theta(q^{1/2})$ and recall that $x_0 = 1-\Theta(q^{1/2})$ (from \eqref{e.x_0 range}), so that the expression in the square brackets is equal to
\begin{align*}
\Theta(1)(-1)^{\ell-1}\ell q^{(\ell-1)/2} - 1 + \Theta(\ell q^{1/2}) + 1 -\Theta(\ell q^{1/2})= \pm O(\ell q^{1/2}),
\end{align*}
implying that the sum in \eqref{e.coefficient cancellation sum} is
$\pm O(1)\sum_{\ell=3}^\infty \frac{q^{1/2}\alpha^\ell}{(\ell-1)} 
= \pm O(1)q^{1/2}\alpha^3.$
This completes the proof.
\end{proof}

\begin{lemma}\label{l.coefficient identity}
It holds for any $x\in\R$ and $\ell\geq 2$ that
\begin{align*}
\ell(\ell-1)\sum_{i=0}^{\ell -2} \frac{(-1)^{i+1}}{\ell-i}x^{i}\binom{\ell-2}{i} = (-1+x)^{\ell} + \ell(-1+x)^{\ell-1} - x^\ell.
\end{align*}
\end{lemma}

\begin{proof}
We do the change of variable $i\mapsto \ell-i-2$ and use $\binom{n}{k} = \frac{n}{k}\binom{n-1}{k-1}$ twice to write the LHS as 
\begin{align*}
\ell(\ell-1)\sum_{i=0}^{\ell -2} \frac{(-1)^{\ell-i-1}}{i+2}x^{\ell-i-2}\binom{\ell-2}{i}=
\sum_{i=0}^{\ell -2} (-1)^{i+1}(i+1)x^{\ell-i-2}\binom{\ell}{i+2}.
\end{align*}

Now we do a change of variable ($i\mapsto i-2$) and add and subtract the terms corresponding to $i=0$ and $i=1$ in the new indexing to obtain
\begin{align*}
\sum_{i=0}^{\ell} (-1)^{i+1}x^{\ell-i}(i-1)\binom{\ell}{i} - x^{\ell}.
\end{align*}
Multiplying out over the $(i-1)$ factor and using the binomial theorem on each of the resulting terms yields the claim.
\end{proof}

\section{Concentration inequality proofs}\label{s.concentration}

Here we prove the concentration inequality Theorem~\ref{t.concentration}. As is typical for such inequalities, the main step is to obtain a bound on the moment generating function.

\begin{proposition}\label{p.exponential moment bound}
Suppose $X$ is such that
\begin{align}\label{e.cubic tail}
\P(X\geq t) \leq C_1\exp(-\rho t^{3/2})
\end{align}
for some $\rho>0$, and all $t\geq 0$. Then, there exist positive absolute constants $C$ and $c >0$ such that, for $\lambda>0$,
$$\E\left[e^{\lambda X}\right] \leq \exp\left\{C\cdot C_1 \left[\lambda\rho^{-2/3} + \lambda^{3}\rho^{-2}\right]\right\}.$$
\end{proposition}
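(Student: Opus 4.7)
The strategy is the standard route from a tail bound to a moment generating function estimate. First, apply the layer-cake identity to the non-negative random variable $e^{\lambda X}$ to write
\[
\E[e^{\lambda X}] = \int_{-\infty}^\infty \lambda e^{\lambda t} \P(X \geq t)\,dt,
\]
and split the integral at $t = 0$: the $(-\infty, 0)$ piece is at most $1$ using only $\P(X \geq t) \leq 1$, and the $(0, \infty)$ piece is controlled using the cubic-tail hypothesis \eqref{e.cubic tail}, giving
\[
\E[e^{\lambda X}] \leq 1 + C_1 \lambda \int_0^\infty e^{\lambda t - \rho t^{3/2}}\,dt.
\]
The change of variables $u = \rho^{2/3} t$ removes $\rho$ from the picture: with $\tilde\lambda := \lambda \rho^{-2/3}$ one gets
\[
\E[e^{\lambda X}] \leq 1 + C_1 \tilde\lambda\, J(\tilde\lambda), \qquad J(\tilde\lambda) := \int_0^\infty e^{\tilde\lambda u - u^{3/2}}\,du,
\]
and the target bound becomes $\exp\{C C_1(\tilde\lambda + \tilde\lambda^3)\}$.

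Next I would estimate $J(\tilde\lambda)$ by partitioning $[0, \infty)$ at $u_\star = 4\tilde\lambda^2$, rather than invoking a full Laplace-method analysis. On $[0, u_\star]$, the exponent $\tilde\lambda u - u^{3/2}$ is bounded by its global maximum $4\tilde\lambda^3/27$, attained at $u = 4\tilde\lambda^2/9$ by one-variable calculus, so this portion contributes at most $4\tilde\lambda^2 e^{4\tilde\lambda^3/27}$. On $[u_\star, \infty)$, the elementary inequality $\tilde\lambda u \leq u^{3/2}/2$ (which is exactly the condition $u \geq 4\tilde\lambda^2$) lets us bound the exponent by $-u^{3/2}/2$, making the tail contribution to $J$ an absolute constant. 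Multiplying through by the $\tilde\lambda$ in front, one obtains
\[
\E[e^{\lambda X}] \leq 1 + C C_1 \bigl(\tilde\lambda + \tilde\lambda^3 e^{4\tilde\lambda^3/27}\bigr)
\]
for some absolute $C$.

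The final step is to take logarithms and recover the target $C C_1(\tilde\lambda + \tilde\lambda^3)$, which is where the bulk of the care goes. For $\tilde\lambda$ bounded, $e^{4\tilde\lambda^3/27}$ is itself bounded and $\log(1+x) \leq x$ gives the claim at once. For large $\tilde\lambda$ the dominant contribution is $4\tilde\lambda^3/27$, and the logarithm of the $C C_1 \tilde\lambda^3$ prefactor is easily absorbed into a multiple of $\tilde\lambda$; the only delicate point is that absorbing the bare $4\tilde\lambda^3/27$ into a multiple of $C C_1 \tilde\lambda^3$ requires $C_1$ bounded below by an absolute constant. This is harmless in the applications of this proposition, where $C_1 = O(1)$ (one may simply replace $C_1$ by $\max(C_1, 1)$ at the outset), so I expect this log-comparison in the large-$\tilde\lambda$ regime to be the only bookkeeping subtlety; every other estimate above is entirely elementary.
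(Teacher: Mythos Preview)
Your proposal is correct and follows essentially the same route as the paper: layer-cake formula, split at $0$, a scaling change of variables, then an elementary bound on the resulting one-parameter integral followed by a small/large case split. The only cosmetic differences are that the paper scales by $y=\lambda^{-2}\rho^{2}x$ (equivalent to yours since $\tilde\lambda^{3}=\lambda^{3}\rho^{-2}$) and bounds the integral in one stroke via the pointwise inequality $y-y^{3/2}\le 1-\tfrac12 y^{3/2}$ rather than splitting at $u_\star=4\tilde\lambda^{2}$; both yield the same shape of estimate.

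Your observation about needing $C_1$ bounded below is well taken, and in fact the paper's writeup has the same issue: in the large-$\tilde\lambda$ regime one ends up with an exponent of the form $(CC_1+c)\tilde\lambda^{3}$, and rewriting this as $C'C_1\tilde\lambda^{3}$ for an \emph{absolute} $C'$ tacitly uses $C_1\gtrsim 1$. Your fix (replace $C_1$ by $\max(C_1,1)$) is exactly what is needed and is harmless in the paper's application, where the tail bound comes from Theorem~\ref{t.uniform lower tail} with $C_1=1$.
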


\begin{proof}
By rescaling $X$ as $\rho^{2/3}X$ it is enough to prove the proposition with $\rho =1$. First, we see that
\begin{align*}
\E[e^{\lambda X}] = \E\left[\lambda \int_{-\infty}^X  e^{\lambda x}\, \diff x\right]
&= \E\left[\lambda \int_{-\infty}^\infty  e^{\lambda x} \one_{X\geq x}\, \diff x\right]
= \lambda \int_{-\infty}^\infty e^{\lambda x} \cdot \P(X\geq x)\, \diff x.
\end{align*}
Next, we break up the integral into two at $0$ and use the hypothesis \eqref{e.cubic tail} for the second term:
\begin{align*}
\lambda\int_{-\infty}^\infty e^{\lambda x} \cdot \P(X\geq x)\, \diff x
&= \lambda\int_{-\infty}^{0} e^{\lambda x} \cdot \P(X\geq x)\, \diff x + \int_{0}^\infty e^{\lambda x} \cdot \P(X\geq x)\, \diff x\\
&\leq \lambda\int_{-\infty}^{0} e^{\lambda x}\,\diff x +  C_1\lambda\int_{0}^\infty e^{\lambda x - x^{3/2}}\, \diff x\\
&= 1 +  C_1\lambda\int_{0}^\infty e^{\lambda x - x^{3/2}}\, \diff x.
\end{align*}
We focus on the second integral now, and make the change of variables $y = \lambda^{-2} x \iff x=\lambda^{2} y$, to obtain
\begin{align*}
C_1\lambda\int_{0}^\infty e^{\lambda x - x^{3/2}}\, \diff x
= C_1\lambda^{3}\int_{0}^\infty e^{\lambda^{3}(y - y^{3/2})}\, \diff y.
\end{align*}
We upper bound this essentially using Laplace's method. We first note that, for all $y>0$, it holds that $y-y^{3/2}\leq 1-\frac{1}{2}y^{3/2}$. So
\begin{align*}
C_1\lambda^{3}\int_{0}^\infty e^{\lambda^{3}(y - y^{3/2})}\, \diff y
\leq C_1\lambda^{3}\cdot \int_{0}^\infty e^{\lambda^{3}(1 - \frac{1}{2}y^{3/2})}\, \diff y
&= C_1\lambda^{3} \cdot 2^{2/3}\Gamma(5/3)\lambda^{-2}e^{\lambda^3}\\
&= C\cdot C_1 \lambda e^{\lambda^3}.
\end{align*}

for positive absolute constants $C$ and $c$. Putting all the above together yields that
\begin{align*}
\E[e^{\lambda X}]
&\leq 1 + C\cdot C_1 \lambda e^{c\lambda^{3}}.
\end{align*}
Now we break into two cases depending on whether $\lambda$ is less than or greater than 1: if $\lambda \leq 1$, then, since $1+x\leq \exp(x)$ and $C\cdot C_1 \exp(c\lambda) \leq C\cdot C_1\exp(c)$, we obtain, for an absolute constant $\tilde C$,
\begin{align*}
\E[e^{\lambda X}] \leq 1 + C\cdot C_1\exp(c)\lambda \leq \exp\left(\tilde C\cdot C_1\lambda\right).
\end{align*}
On the other hand if $\lambda\geq 1$, we observe that, since $x\leq \exp(x)$, and by increasing the coefficient in the exponent,
\begin{align*}
1 + C\cdot C_1 \lambda^3e^{c\lambda^{3}} \leq e^{c'\lambda^{3}}.
\end{align*}
It is easy to see that we may take $c'$ to depend linearly on $C_1$. Thus overall, since $\max(a,b)\leq a+b$ when $a,b\geq 0$, we obtain, for some universal constant $C$,
\begin{equation*}
\E\left[e^{\lambda X}\right] \leq \exp\left(C\cdot C_1(\lambda + \lambda^3)\right).\qedhere
\end{equation*}
\end{proof}

\begin{proof}[Proof of Theorem~\ref{t.concentration}]
Following the proof of the Chernoff bound, we exponentiate inside the probability (with $\lambda>0$ to be chosen shortly) and apply Markov's inequality:
\begin{align*}
\P\left(\sum_{i=1}^I X_i \geq t + C\cdot C_1\sigma_{2/3}\right)
&= \P\left(e^{\lambda\sum_{i=1}^I X_i} \geq e^{\lambda t + \lambda C\cdot C_1\cdot\sigma_{2/3}}\right)
\leq e^{-\lambda t - \lambda C\cdot C_1\cdot\sigma_{2/3}}\prod_{i=1}^I \E\left[e^{\lambda X_i}\right].
\end{align*}
Using Proposition~\ref{p.exponential moment bound}, this is bounded by
\begin{align*}
\exp\left(-\lambda t - \lambda C\cdot C_1\sigma_{2/3} + C\cdot C_1\left[\lambda\sum_{i=1}^I\rho_i^{-2/3} + \lambda^{3}\sum_{i=1}^I\rho_i^{-2}\right]\right)
&= \exp\left(-\lambda t + C\cdot C_1\cdot \lambda^{3}\sigma_2\right),
\end{align*}
the penultimate line using Proposition~\ref{p.exponential moment bound} for $\lambda > 0$ to be chosen soon. Optimizing over $\lambda$ and setting it to $c't^{1/2}\sigma_2^{-1/2}$ for some small constant $c'>0$ now yields that,
\begin{equation*}
\P\left(\sum_{i=1}^I X_i \geq t + C\cdot C_1\cdot \sigma_{2/3}\right) \leq \exp\left(-ct^{3/2}\sigma_2^{-1/2}\right). \qedhere
\end{equation*}
\end{proof}

\bibliographystyle{alpha}
\bibliography{q-pushtasep-lower-tail}

\newcommand{\etalchar}[1]{$^{#1}$}
\begin{thebibliography}{BDM{\etalchar{+}}01}

\bibitem[ACG23]{aggarwal2022asep}
Amol Aggarwal, Ivan Corwin, and Promit Ghosal.
\newblock The {ASEP} speed process.
\newblock {\em Advances in Mathematics}, 422:109004, 2023.

\bibitem[ACQ11]{amir2011probability}
Gideon Amir, Ivan Corwin, and Jeremy Quastel.
\newblock Probability distribution of the free energy of the continuum directed
  random polymer in 1+ 1 dimensions.
\newblock {\em Communications on pure and applied mathematics}, 64(4):466--537,
  2011.

\bibitem[ADH17]{auffinger201750}
Antonio Auffinger, Michael Damron, and Jack Hanson.
\newblock {\em 50 years of first-passage percolation}, volume~68.
\newblock American Mathematical Soc., 2017.

\bibitem[Agg23]{aggarwal2019universality}
Amol Aggarwal.
\newblock Universality for lozenge tiling local statistics.
\newblock {\em Annals of Mathematics}, 2023+.
\newblock To appear.

\bibitem[AGH21]{augeri2021large}
Fanny Augeri, Alice Guionnet, and Jonathan Husson.
\newblock Large deviations for the largest eigenvalue of sub-{G}aussian
  matrices.
\newblock {\em Communications in mathematical physics}, 383:997--1050, 2021.

\bibitem[AH21]{aggarwal2021edge}
Amol Aggarwal and Jiaoyang Huang.
\newblock Edge statistics for lozenge tilings of polygons, {II}: {A}iry line
  ensemble.
\newblock {\em arXiv preprint arXiv:2108.12874}, 2021.

\bibitem[BBC20]{barraquand2020half}
Guillaume Barraquand, Alexei Borodin, and Ivan Corwin.
\newblock Half-space {M}acdonald processes.
\newblock In {\em Forum of Mathematics, Pi}, volume~8, page e11. Cambridge
  University Press, 2020.

\bibitem[BC14]{borodin2014macdonald}
Alexei Borodin and Ivan Corwin.
\newblock Macdonald processes.
\newblock {\em Probability Theory and Related Fields}, 158(1):225--400, 2014.

\bibitem[BCD21]{barraquand2021fluctuations}
Guillaume Barraquand, Ivan Corwin, and Evgeni Dimitrov.
\newblock Fluctuations of the log-gamma polymer free energy with general
  parameters and slopes.
\newblock {\em Probability Theory and Related Fields}, 181(1):113--195, 2021.

\bibitem[BCFV15]{borodin2015height}
Alexei Borodin, Ivan Corwin, Patrik Ferrari, and B{\'a}lint Vet{\H{o}}.
\newblock Height fluctuations for the stationary {KPZ} equation.
\newblock {\em Mathematical Physics, Analysis and Geometry}, 18(1):1--95, 2015.

\bibitem[BDM{\etalchar{+}}01]{baik2001optimal}
Jinho Baik, Percy Deift, Kenneth~DT McLaughlin, Peter Miller, and Xin Zhou.
\newblock Optimal tail estimates for directed last passage site percolation
  with geometric random variables.
\newblock {\em Advances in Theoretical and Mathematical Physics}, 5(6):1--41,
  2001.

\bibitem[BF08]{borodin2008large}
Alexei Borodin and Patrik Ferrari.
\newblock Large time asymptotics of growth models on space-like paths {I:
  PushASEP}.
\newblock {\em Electronic Journal of Probability}, 13:1380--1418, 2008.

\bibitem[BG16]{borodin2016moments}
Alexei Borodin and Vadim Gorin.
\newblock Moments match between the {KPZ} equation and the {A}iry point
  process.
\newblock {\em SIGMA. Symmetry, Integrability and Geometry: Methods and
  Applications}, 12:102, 2016.

\bibitem[BG21]{riddhi-aging}
Riddhipratim Basu and Shirshendu Ganguly.
\newblock Time correlation exponents in last passage percolation.
\newblock {\em In and out of equilibrium 3: Celebrating Vladas Sidoravicius},
  pages 101--123, 2021.

\bibitem[BGHH22]{watermelon}
Riddhipratim Basu, Shirshendu Ganguly, Alan Hammond, and Milind Hegde.
\newblock Interlacing and scaling exponents for the geodesic watermelon in last
  passage percolation.
\newblock {\em Communications in Mathematical Physics}, 393(3):1241--1309,
  2022.

\bibitem[BGHK21]{basu2019lower}
Riddhipratim Basu, Shirshendu Ganguly, Milind Hegde, and Manjunath Krishnapur.
\newblock Lower deviations in $\beta$-ensembles and law of iterated logarithm
  in last passage percolation.
\newblock {\em Israel Journal of Mathematics}, 242(1):291--324, 2021.

\bibitem[BGS21]{basu2017upper}
Riddhipratim Basu, Shirshendu Ganguly, and Allan Sly.
\newblock Upper tail large deviations in first passage percolation.
\newblock {\em Communications on Pure and Applied Mathematics},
  74(8):1577--1640, 2021.

\bibitem[BGZ21]{basu2021temporal}
Riddhipratim Basu, Shirshendu Ganguly, and Lingfu Zhang.
\newblock Temporal correlation in last passage percolation with flat initial
  condition via {B}rownian comparison.
\newblock {\em Communications in Mathematical Physics}, 383:1805--1888, 2021.

\bibitem[BHS18]{BHS18}
Riddhipratim Basu, Christopher Hoffman, and Allan Sly.
\newblock Nonexistence of bigeodesics in integrable models of last passage
  percolation.
\newblock {\em arXiv preprint arXiv:1811.04908}, 2018.

\bibitem[BL18]{baik2018fluctuations}
Jinho Baik and Zhipeng Liu.
\newblock Fluctuations of {TASEP} on a ring in relaxation time scale.
\newblock {\em Communications on Pure and Applied Mathematics}, 71(4):747--813,
  2018.

\bibitem[BL19]{baik2019multipoint}
Jinho Baik and Zhipeng Liu.
\newblock Multipoint distribution of periodic {TASEP}.
\newblock {\em Journal of the American Mathematical Society}, 32(3):609--674,
  2019.

\bibitem[BL21]{baik2021periodic}
Jinho Baik and Zhipeng Liu.
\newblock Periodic {TASEP} with general initial conditions.
\newblock {\em Probability Theory and Related Fields}, 179(3):1047--1144, 2021.

\bibitem[BO17]{borodin2017asep}
Alexei Borodin and Grigori Olshanski.
\newblock The {ASEP} and determinantal point processes.
\newblock {\em Communications in Mathematical Physics}, 353(2):853--903, 2017.

\bibitem[BO21]{betea2021peaks}
Dan Betea and Alessandra Occelli.
\newblock Peaks of cylindric plane partitions.
\newblock {\em arXiv preprint arXiv:2111.15538}, 2021.

\bibitem[Bor07]{borodin2007periodic}
Alexei Borodin.
\newblock Periodic {S}chur process and cylindric partitions.
\newblock {\em Duke Mathematical Journal}, 140(3):391--468, 2007.

\bibitem[Bor18]{borodin2018stochastic}
Alexei Borodin.
\newblock Stochastic higher spin six vertex model and {M}acdonald measures.
\newblock {\em Journal of Mathematical Physics}, 59(2):023301, 2018.

\bibitem[BP16]{borodin2016nearest}
Alexei Borodin and Leonid Petrov.
\newblock Nearest neighbor {M}arkov dynamics on {M}acdonald processes.
\newblock {\em Advances in Mathematics}, 300:71--155, 2016.

\bibitem[BSS14]{slow-bond}
Riddhipratim Basu, Vladas Sidoravicius, and Allan Sly.
\newblock Last passage percolation with a defect line and the solution of the
  slow bond problem.
\newblock {\em arXiv preprint arXiv:1408.3464}, 2014.

\bibitem[BSS19]{coalescence}
Riddhipratim Basu, Sourav Sarkar, and Allan Sly.
\newblock Coalescence of geodesics in exactly solvable models of last passage
  percolation.
\newblock {\em Journal of Mathematical Physics}, 60(9):093301, 2019.

\bibitem[CC22]{cafasso2022riemann}
Mattia Cafasso and Tom Claeys.
\newblock A {Riemann-Hilbert} approach to the lower tail of the
  {Kardar-Parisi-Zhang} equation.
\newblock {\em Communications on Pure and Applied Mathematics}, 75(3):493--540,
  2022.

\bibitem[CCO20]{cohen2020moments}
Philip Cohen, Fabio~Deelan Cunden, and Neil O’Connell.
\newblock Moments of discrete orthogonal polynomial ensembles.
\newblock {\em Electronic Journal of Probability}, 25:1--19, 2020.

\bibitem[CDG23]{cook2023full}
Nicholas~A Cook, Raphael Ducatez, and Alice Guionnet.
\newblock Full large deviation principles for the largest eigenvalue of
  sub-{G}aussian {W}igner matrices.
\newblock {\em arXiv preprint arXiv:2302.14823}, 2023.

\bibitem[CG20a]{corwin2018kpz}
Ivan Corwin and Promit Ghosal.
\newblock {KPZ} equation tails for general initial data.
\newblock {\em Electronic Journal of Probability}, 25, 2020.

\bibitem[CG20b]{corwin2020lower}
Ivan Corwin and Promit Ghosal.
\newblock Lower tail of the {KPZ} equation.
\newblock {\em Duke Mathematical Journal}, 169(7):1329--1395, 2020.

\bibitem[CH14]{corwin2014brownian}
Ivan Corwin and Alan Hammond.
\newblock Brownian {G}ibbs property for {A}iry line ensembles.
\newblock {\em Inventiones mathematicae}, 195(2):441--508, 2014.

\bibitem[CH16]{corwin2016kpz}
Ivan Corwin and Alan Hammond.
\newblock {KPZ} line ensemble.
\newblock {\em Probability Theory and Related Fields}, 166(1):67--185, 2016.

\bibitem[CHH23]{calvert2019brownian}
Jacob Calvert, Alan Hammond, and Milind Hegde.
\newblock Brownian structure in the {KPZ} fixed point.
\newblock {\em Ast\'erisque}, 2023+.
\newblock To appear.

\bibitem[Dau23]{dauvergne2023wiener}
Duncan Dauvergne.
\newblock Wiener densities for the {A}iry line ensemble.
\newblock {\em arXiv preprint arXiv:2302.00097}, 2023.

\bibitem[DOV22]{dauvergne2018directed}
Duncan Dauvergne, Janosch Ortmann, and B{\'a}lint Vir{\'a}g.
\newblock The directed landscape.
\newblock {\em Acta Mathematica}, 2022+.
\newblock To appear.

\bibitem[DV21a]{dauvergne2018basic}
Duncan Dauvergne and B{\'a}lint Vir{\'a}g.
\newblock Bulk properties of the {A}iry line ensemble.
\newblock {\em The Annals of Probability}, 49(4):1738--1777, 2021.

\bibitem[DV21b]{dauvergne2021scaling}
Duncan Dauvergne and B{\'a}lint Vir{\'a}g.
\newblock The scaling limit of the longest increasing subsequence.
\newblock {\em arXiv preprint arXiv:2104.08210}, 2021.

\bibitem[EGO22]{emrah2022coupling}
Elnur Emrah, Nicos Georgiou, and Janosch Ortmann.
\newblock Coupling derivation of optimal-order central moment bounds in
  exponential last-passage percolation.
\newblock {\em arXiv preprint arXiv:2204.06613}, 2022.

\bibitem[EJS20]{emrah2020right}
Elnur Emrah, Chris Janjigian, and Timo Sepp{\"a}l{\"a}inen.
\newblock Right-tail moderate deviations in the exponential last-passage
  percolation.
\newblock {\em arXiv preprint arXiv:2004.04285}, 2020.

\bibitem[EJS21]{emrah2021optimal}
Elnur Emrah, Christopher Janjigian, and Timo Sepp{\"a}l{\"a}inen.
\newblock Optimal-order exit point bounds in exponential last-passage
  percolation via the coupling technique.
\newblock {\em arXiv preprint arXiv:2105.09402}, 2021.

\bibitem[FSV14]{flores2014fluctuation}
Gregorio~Moreno Flores, Timo Sepp{\"a}l{\"a}inen, and Benedek Valk{\'o}.
\newblock Fluctuation exponents for directed polymers in the intermediate
  disorder regime.
\newblock {\em Electronic Journal of Probability}, 19:1--28, 2014.

\bibitem[GH21]{guionnet2021asymptotics}
Alice Guionnet and Jonathan Husson.
\newblock Asymptotics of $k$ dimensional spherical integrals and applications.
\newblock {\em arXiv preprint arXiv:2101.01983}, 2021.

\bibitem[GH22]{ganguly2022sharp}
Shirshendu Ganguly and Milind Hegde.
\newblock Sharp upper tail estimates and limit shapes for the {KPZ} equation
  via the tangent method.
\newblock {\em arXiv preprint arXiv:2208.08922}, 2022.

\bibitem[GH23]{ganguly2020optimal}
Shirshendu Ganguly and Milind Hegde.
\newblock Optimal tail exponents in general last passage percolation via
  bootstrapping \& geodesic geometry.
\newblock {\em Probability Theory and Related Fields}, 186(1):221--284, 2023.

\bibitem[Ham19]{hammond2017patchwork}
Alan Hammond.
\newblock A patchwork quilt sewn from {B}rownian fabric: regularity of polymer
  weight profiles in {B}rownian last passage percolation.
\newblock In {\em Forum of Mathematics, Pi}, volume~7. Cambridge University
  Press, 2019.

\bibitem[Ham22]{hammond2016brownian}
Alan Hammond.
\newblock Brownian regularity for the {A}iry line ensemble, and multi-polymer
  watermelons in {B}rownian last passage percolation.
\newblock {\em Memoirs of the American Mathematical Society}, 277(1363), 2022.

\bibitem[Hua21]{huang2021edge}
Jiaoyang Huang.
\newblock Edge statistics for lozenge tilings of polygons, {I}: {C}oncentration
  of height function on strip domains.
\newblock {\em arXiv preprint arXiv:2108.12872}, 2021.

\bibitem[HYZ23]{huang2023pearcey}
Jiaoyang Huang, Fan Yang, and Lingfu Zhang.
\newblock Pearcey universality at cusps of polygonal lozenge tiling.
\newblock {\em arXiv preprint arXiv:2306.01178}, 2023.

\bibitem[IMS21]{imamura2021skew}
Takashi Imamura, Matteo Mucciconi, and Tomohiro Sasamoto.
\newblock Skew {RSK} dynamics: {G}reene invariants, affine crystals and
  applications to $q$-{W}hittaker polynomials.
\newblock {\em arXiv preprint arXiv:2106.11922}, 2021.

\bibitem[IMS22]{imamura2022solvable}
Takashi Imamura, Matteo Mucciconi, and Tomohiro Sasamoto.
\newblock Solvable models in the {KPZ} class: approach through periodic and
  free boundary {S}chur measures.
\newblock {\em arXiv preprint arXiv:2204.08420}, 2022.

\bibitem[IS16]{imamura2016determinantal}
Takashi Imamura and Tomohiro Sasamoto.
\newblock Determinantal structures in the {O}’{C}onnell-{Y}or directed random
  polymer model.
\newblock {\em Journal of Statistical Physics}, 163(4):675--713, 2016.

\bibitem[Joh00a]{johansson2000shape}
Kurt Johansson.
\newblock Shape fluctuations and random matrices.
\newblock {\em Communications in mathematical physics}, 209(2):437--476, 2000.

\bibitem[Joh00b]{J00}
Kurt Johansson.
\newblock Transversal fluctuations for increasing subsequences on the plane.
\newblock {\em Probability theory and related fields}, 116(4):445--456, 2000.

\bibitem[Joh01]{johansson-toprows}
Kurt Johansson.
\newblock Discrete orthogonal polynomial ensembles and the {P}lancherel
  measure.
\newblock {\em Annals of Mathematics}, 153(1):259--296, 2001.

\bibitem[KC18]{stretched-exp-concentration}
Arun~Kumar Kuchibhotla and Abhishek Chakrabortty.
\newblock Moving beyond sub-gaussianity in high-dimensional statistics:
  Applications in covariance estimation and linear regression.
\newblock {\em arXiv preprint arXiv:1804.02605}, 2018.

\bibitem[Led05a]{ledoux2005deviation}
Michel Ledoux.
\newblock Deviation inequalities on largest eigenvalues.
\newblock In {\em GAFA seminar notes}, 2005.

\bibitem[Led05b]{ledoux2005distributions}
Michel Ledoux.
\newblock Distributions of invariant ensembles from the classical orthogonal
  polynimials: the discrete case.
\newblock {\em Electronic Journal of Probability}, 10:1116--1146, 2005.

\bibitem[LR10]{ledoux2010}
Michel Ledoux and Brian Rider.
\newblock Small deviations for beta ensembles.
\newblock {\em Electron. J. Probab.}, 15:1319--1343, 2010.

\bibitem[LS22a]{landon2022tail}
Benjamin Landon and Philippe Sosoe.
\newblock Tail bounds for the {O'C}onnell-{Y}or polymer.
\newblock {\em arXiv preprint arXiv:2209.12704}, 2022.

\bibitem[LS22b]{landon2022upper}
Benjamin Landon and Philippe Sosoe.
\newblock Upper tail bounds for stationary {KPZ} models.
\newblock {\em arXiv preprint arXiv:2208.01507}, 2022.

\bibitem[Mac98]{macdonald1998symmetric}
Ian~Grant Macdonald.
\newblock {\em Symmetric functions and {H}all polynomials}.
\newblock Oxford university press, 1998.

\bibitem[Mas14]{masoero2014laplace}
Davide Masoero.
\newblock A {L}aplace's method for series and the semiclassical analysis of
  epidemiological models.
\newblock {\em arXiv preprint arXiv:1403.5532}, 2014.

\bibitem[MP16]{matveev2016q}
Konstantin Matveev and Leonid Petrov.
\newblock $q$-randomized {R}obinson-{S}chensted-{K}nuth correspondences and
  random polymers.
\newblock {\em Annales de l’Institut Henri Poincar{\'e} D}, 4(1):1--123,
  2016.

\bibitem[MS09]{mansour2009some}
Toufik Mansour and Armend~Sh Shabani.
\newblock Some inequalities for the $q$-digamma function.
\newblock {\em J. Inequal. Pure Appl. Math}, 10(1), 2009.

\bibitem[OY01]{OY01}
Neil O'Connell and Marc Yor.
\newblock Brownian analogues of {B}urke's theorem.
\newblock {\em Stochastic Process. Appl.}, 96(2):285--304, 2001.

\bibitem[RRV11]{ramirez2011beta}
Jose Ramirez, Brian Rider, and B{\'a}lint Vir{\'a}g.
\newblock Beta ensembles, stochastic {A}iry spectrum, and a diffusion.
\newblock {\em Journal of the American Mathematical Society}, 24(4):919--944,
  2011.

\bibitem[Sep12]{seppalainen2012scaling}
Timo Sepp{\"a}l{\"a}inen.
\newblock Scaling for a one-dimensional directed polymer with boundary
  conditions.
\newblock {\em The Annals of Probability}, 40(1):19--73, 2012.

\bibitem[SS90]{sagan1990robinson}
Bruce~E Sagan and Richard~P Stanley.
\newblock Robinson-{S}chensted algorithms for skew tableaux.
\newblock {\em Journal of Combinatorial Theory, Series A}, 55(2):161--193,
  1990.

\bibitem[SS11]{stein2011fourier}
Elias~M Stein and Rami Shakarchi.
\newblock {\em Fourier analysis: an introduction}, volume~1.
\newblock Princeton University Press, 2011.

\bibitem[SS22]{schmid2022mixing}
Dominik Schmid and Allan Sly.
\newblock Mixing times for the {TASEP} on the circle.
\newblock {\em arXiv preprint arXiv:2203.11896}, 2022.

\bibitem[SSZ21]{sarkar2021infinite}
Sourav Sarkar, Allan Sly, and Lingfu Zhang.
\newblock Infinite order phase transition in the slow bond {TASEP}.
\newblock {\em arXiv preprint arXiv:2109.04563}, 2021.

\bibitem[SV21]{sarkar2020brownian}
Sourav Sarkar and B{\'a}lint Vir{\'a}g.
\newblock Brownian absolute continuity of the {KPZ} fixed point with arbitrary
  initial condition.
\newblock {\em Ann. Probab.}, 2021+.
\newblock To appear.

\bibitem[Vet22]{vetHo2022asymptotic}
B{\'a}lint Vet{\H{o}}.
\newblock Asymptotic fluctuations of geometric $q$-{TASEP}, geometric
  $q$-{PushTASEP} and $q$-{PushASEP}.
\newblock {\em Stochastic Processes and their Applications}, 148:227--266,
  2022.

\bibitem[Wid02]{widom2002convergence}
Harold Widom.
\newblock On convergence of moments for random young tableaux and a random
  growth model.
\newblock {\em International Mathematics Research Notices}, 2002(9):455--464,
  2002.

\end{thebibliography}

\end{document}